\DeclareMathAlphabet{\pazocal}{OMS}{zplm}{m}{n}
\def\BState{\State\hskip-\ALG@thistlm}
\DeclareMathAlphabet{\mathscrbf}{OMS}{mdugm}{b}{n}
\newcommand{\bA}{\bar{A}}
\newcommand{\bB}{\bar{B}}
\newcommand{\bC}{\bar{C}}
\newcommand{\bK}{\bar{K}}
\newcommand{\mN}{\mathcal{N}}
\newcommand{\E}{\pazocal{E}}
\newcommand{\B}{\pazocal{B}}
\newcommand{\C}{\pazocal{C}}
\newcommand{\D}{\pazocal{D}}
\renewcommand{\P}{\pazocal{P}}
\newcommand{\K}{\pazocal{K}}
\newcommand{\Ss}{\pazocal{S}}
\newcommand{\U}{\pazocal{U}}
\newcommand{\mC}{\mathcal{C}}
\newcommand{\bfC}{\mathscrbf{C}_{\rm set}}
\newtheorem{theorem}{Theorem}
\newtheorem{lem}{Lemma}
\newtheorem{defn}{Definition}
\newtheorem{cor}{Corollary}
\newtheorem{rem}{Remark}
\newtheorem{prob}{Problem}
\newtheorem{assume}{Assumption}
\newtheorem{prop}{Proposition}
\def \10n{\!\!\!\!\!\!\!\!\!\!}
\def \20n{\!\!\!\!\!\!\!\!\!\!\!\!\!\!\!\!\!\!\!\!}
\def \*{\star}
 \def\@eqnnum{{\normalsize \normalcolor (\theequation)}} 
\title{\LARGE \bf
Minimum Cost Feedback Selection in Structured Systems: Hardness and Approximation Algorithm
}
\author{Aishwary~Joshi, 
        Shana~Moothedath
        and~Prasanna~Chaporkar
\thanks{The authors Aishwary Joshi and Prasanna Chaporkar are in the Department of Electrical Engineering, Indian Institute of Technology Bombay, India and Shana Moothedath in University of Washington, Seattle. Email: $\lbrace$aishwary, chaporkar$\rbrace$@ee.iitb.ac.in,  $\lbrace$sm15$\rbrace$@uw.edu.}}
\begin{document}

\maketitle
\thispagestyle{empty}
\pagestyle{empty}
\begin{abstract}
In this paper, we study output feedback selection in linear time invariant structured systems. We assume that the inputs and the outputs are {\em dedicated}, i.e., each input directly actuates a single state and each output directly senses a single state. Given a structured system with dedicated inputs and outputs and a cost matrix that denotes the cost of each feedback connection, our aim is to select an optimal set of feedback connections such that the closed-loop system satisfies arbitrary pole-placement. This problem is referred as the {\em optimal feedback selection problem for dedicated i/o}. We first prove the NP-hardness of the problem using a reduction from a well known NP-hard problem, the {\em weighted set cover} problem. In addition, we also prove that the optimal feedback selection problem for dedicated i/o is {\em inapproximable} to a constant factor of log$\,n$, where $n$ denotes the system dimension. To this end, we propose an algorithm to find an approximate solution to the optimal feedback selection problem for dedicated i/o. The proposed algorithm consists of a {\em potential function} incorporated with a {\em greedy scheme} and attains a solution with a guaranteed approximation ratio. Then we consider two special network topologies of practical importance, referred as {\em back-edge feedback structure} and {\em hierarchical networks}. For the first case, which is NP-hard and inapproximable to a multiplicative factor of log$\,n$, we provide a (log$\,n$)-approximate solution, where $n$ denotes the system dimension. For hierarchical networks, we give a dynamic programming based algorithm to obtain an optimal solution in polynomial time. 
\end{abstract}
\vspace*{-2 mm}
\begin{IEEEkeywords}
Linear dynamical systems, arbitrary pole-placement, network analysis and control, minimum cost feedback selection, dynamic programming, hierarchical networks.
\end{IEEEkeywords}
\vspace*{-2 mm}
\section{Introduction}\label{sec:intro}
The emergence of large-scale networks as physical models capturing the structural properties of real networks presents new challenges in design, control and optimization. Large-scale dynamical systems have applications in diverse areas, including biological networks,
transportation networks, water distribution networks, multi-agent systems and
internet. Most of the real world networks are often too complex and of large system dimension that employing conventional control theoretic tools to analyse various properties of these systems are computationally infeasible. Recently, there has been immense research advance in the area of large-scale dynamical
systems collectively using concepts from various interdisciplinary fields including control
theory, network science and statistical physics. These studies emphasise on the relationship between the topology and the dynamics of complex networks. 

This paper deals with {\em feedback selection} in linear time invariant (LTI) systems. Feedback selection problem is a classical problem in control theory which resisted much advances due to the inherent hardness of the problem. We address the feedback selection problem for a complex system whose graph pattern is known and parameter values are unknown. More specifically, this paper discusses optimal feedback selection for {\em structured} LTI systems. {\it Given a structured system with specified state, input and output structures and a cost matrix that denotes the cost of each feedback connection, our objective is to design an optimal feedback matrix that satisfies arbitrary pole-placement of the closed-loop system.} The cost associated with the feedback connections comes from installation and monitoring cost associated with the network. The motivation for this problem comes from the recent interest and developments in the control of large-scale systems modeled with a very large number of variables, where implementing control strategies that affect all or many of the variables in the system is not economical or rather not feasible.

Structural analysis of dynamical systems is a well studied area since the introduction of {\em structural controllability} (see \cite{ComDio:15,ComDioWou:02,KalBelSiv:13,Ols:14,PeqKarAgu_2:16} as representatives). The strength of this analysis lies in the fact that many structural properties are `generic' in nature, i.e., these properties hold for {\em almost all} systems with the same structure \cite{ComDioWou:02,Rei:88}. Over last few decades, various design and optimization problems  in complex networks  are addressed using structural analysis in many papers. These papers mainly use concepts of bipartite matching and graph connectivity. 
 For a detailed reading on various problems in this area see \cite{LiuBar:16} and references therein.  

Optimal feedback selection for structured systems is previously addressed in many papers \cite{UnySez:89}. Given a structured state matrix, an optimal input-output and feedback co-design problem is addressed in \cite{PeqKarAgu_2:16}. As structure of input, output and feedback matrices are {\em unconstrained}, the problem considered in \cite{PeqKarAgu_2:16} is solvable in polynomial time complexity. Paper \cite{PeqKarAgu:16} considered the input-output and feedback co-design problem for constrained input, output and feedback structures. This problem turns out be NP-hard as a subproblem, namely the  constrained minimum input selection problem, is NP-hard \cite{PeqKarAgu:16}. Due to the NP-hardness of the problem,  the class of {\em irreducible}\footnote{A structured system is said to be irreducible if there exists a directed path between any two arbitrary nodes in the state digraph $\D(\bA)$ (see Section~\ref{sec:prelim}).} systems is considered in \cite{PeqKarAgu:16}. Later in \cite{MooChaBel:17_Automatica}, an order-optimal approximation algorithm is given for the input-output and feedback selection co-design problem.

This paper deals with optimal feedback selection for structured systems with specified state, input and output matrices. Note that, here input and output matrices are specified and there is no selection of inputs and outputs. The structure of the feedback pattern is constrained and each feedback edge is associated with a cost. Our aim is to design an optimal feedback matrix that satisfies the prescribed structure and also of minimum cost. Depending on the nature of inputs and outputs, {\em dedicated}\footnote{{An input (output, resp.) 
is said to be {\em dedicated} if it can actuate (sense, resp.) a single state only.}} and {\em non-dedicated}, and the nature of the costs of the feedback connections, {\em uniform} and {\em non-uniform}, different formulations of this problem is addressed before. Table below summarizes the associated results.
\vspace*{-2 mm}
\begin{table}[h]\caption{{\scalefont{0.9}{Algorithmic complexity results of the optimal feedback selection problem}}}\label{tb:NP}
\vspace*{-3 mm}
\begin{center}
\begin{tabular}{|p{21mm}|p{1.9cm}|p{3.0cm}|}
\hline
\diaghead{\theadfont Diag ColumnmnHea}%
{Input\\ and Output}{Feedback costs} & Uniform & Non-uniform \\
\hline
Dedicated & P \cite{MooChaBel:17_Erratum} & NP-hard (this paper)\\
\hline
Non-dedicated & NP-hard \cite{MooChaBel:17_Line} & NP-hard \cite{MooChaBel:17_Line}\\
\hline
\end{tabular}
\end{center}
\vspace*{-4 mm}
\end{table}
 {\em Constrained} feedback selection with  non-dedicated inputs and  outputs  is considered in \cite{MooChaBel:17_Line}.  In \cite{MooChaBel:17_Line}, the authors show the NP-hardness of the problem for non-dedicated i/o case and later propose a polynomial time algorithm for a special graph topology so-called line graphs. Optimal feedback selection problem with dedicated inputs and outputs and uniform cost feedback edges is considered in \cite{CarPeqAguKarPap:15}, \cite{MooChaBel:17_Erratum} and a polynomial time algorithm is given in \cite{MooChaBel:17_Erratum}.
{\em  In this paper, we consider optimal feedback selection for dedicated inputs and outputs and non-uniform cost feedback edges}.
\begin{rem}
NP-hardness result for non-dedicated i/o case relies heavily on the non-dedicated nature of i/o's, and hence the NP-hardness proved in \cite{MooChaBel:17_Line} does not automatically imply NP-hardness of a special case with dedicated i/o's. Note that, for non-dedicated i/o's, Problem~\ref{prob:one} is NP-hard even when the feedback costs are uniform. However, for uniform cost setting, the dedicated i/o case is solvable in polynomial time \cite{MooChaBel:17_Erratum}.
\end{rem}
\noindent In this scenario, we make the following contributions:
 
\noindent $\bullet$ We prove that the optimal feedback selection problem with dedicated input-output set and non-uniform cost feedback edges is NP-hard (Theorem~\ref{theorem:one}).\\
\noindent $\bullet$ We prove that the optimal feedback selection problem with dedicated inputs and outputs, and feedback edges with non-uniform cost is inapproximable to a multiplicative factor of log$\, n$, where $n$ denotes the system dimension (Theorem~\ref{theorem:two}).\\
\noindent $\bullet$ We propose an approximation algorithm  with a guaranteed approximation ratio for solving the problem (Algorithm~\ref{algo:four} and Theorem~\ref{theorem:four}).\\
\noindent $\bullet$ We show that the proposed algorithm has computational  complexity  polynomial in the number of cycles in the system digraph and the system dimension (Theorem~\ref{th:comp}).\\
\noindent $\bullet$ We consider a special class of systems with a constraint on the structure of the feedback matrix, referred as back-edge feedback selection, and propose an approximation algorithm to solve the problem with a guaranteed approximation ratio of log$\,n$, where $n$ denotes the system dimension (Algorithm~\ref{algo:five} and~Theorem~\ref{th:spec_1}).\\
\noindent $\bullet$ We consider another special class of systems referred as hierarchical networks and propose a polynomial time algorithm based on dynamic programming to obtain an optimal solution to the problem (Algorithm~\ref{algo:hierarchical} and Theorem~ \ref{theorem:spcase2}).

The organization of the rest of the paper is as follows: Section~\ref{sec:prob_form} gives the formulation of the optimization problem addressed in this paper. Section~\ref{sec:prelim} describes preliminaries and few existing results used in the sequel. Section~\ref{sec:NP} analyzes the complexity of the problem and proves the NP-hardness and the inapproximability of the problem. Section~\ref{sec:Reform} reformulates the problem to a graph theoretic equivalent. Section~\ref{sec:Approx} gives an approximation algorithm to solve the problem. Section~\ref{sec:spcases} explores two special topologies of structured systems and gives an approximation algorithm and an optimal algorithm, respectively, to solve the two cases. Finally, Section~\ref{sec:conclu} gives the concluding remarks and future directions.
\section{Problem Formulation}\label{sec:prob_form}
Consider an LTI system $\dot{x}=Ax {+}Bu$, $y {=} Cx$, where $A {\, \in\,} \mathbb{R}^{n\times n}$, $B{\, \in\,} \mathbb{R}^{n\times m}$ and $C {\, \in\,} \mathbb{R}^{p\times n}$. Here the matrices  ${A},{B}$ and ${C}$ denote the state, input and output matrices respectively and $\mathbb{R}$ denotes the set of real numbers. The structured matrices  $\bar{A},\bar{B}$ and $\bar{C}$ corresponding to this system are such that
{\scalefont{0.9}{
\begin{eqnarray}
\bar{A}_{ij}=0 \text{~whenever~} {A}_{ij}=0,\nonumber\\
\bar{B}_{ij}=0 \text{~whenever~} {B}_{ij}=0,\nonumber\\
\bar{C}_{ij}=0 \text{~whenever~} {C}_{ij}=0.\label{eq:sys}
\end{eqnarray}
}}
For $(A,B,C)$ that satisfy equation~\eqref{eq:sys}, $(\bA,\bB,\bC)$ is referred as the {\it structured system} of $(A,B,C)$ and the system $(A,B,C)$ is called a {\it numerical realization} of the structured system $(\bA,\bB,\bC)$. Here $\bA\in \{0,\star\}^{n\times n}$, $\bB\in \{0,\star\}^{n\times m}$ and $\bC\in \{0,\star\}^{p\times n}$. The $0$ entries in the structured system correspond to fixed zeros and the $\*$ entries correspond to unrelated indeterminate. Let $P \in \mathbb{R}^{m\times p}$ be a cost matrix, where $P_{ij}$ denotes the cost of feeding the $j^{\rm th}$ output to the $i^{\rm th}$ input. Our objective here is output feedback selection. A feedback edge is said to be {\em infeasible} if the corresponding output can not be fed to the corresponding input. All infeasible feedback connections are assigned infinity cost. In other words,  $P_{ij}=\infty$ implies that the $j^{\rm th}$ output can not be fed to the $i^{\rm th}$ input, or the feedback edge $(y_j,u_i)$ is infeasible. We define the feedback matrix $\bar{K}\in \{0,\star\}^{m\times p}$, where $\bar{K}_{ij}=\star$ only if $P_{ij}=\infty$. Our aim is to design an optimal output feedback matrix such that the closed-loop system guarantees arbitrary pole-placement. A graph theoretic necessary and sufficient condition for checking whether arbitrary pole-placement is feasible or not in a structured system is given in \cite{PicSezSil:84}. This condition depends on the existence of {\em{structurally fixed modes}} (SFMs) in the closed-loop structured system. Hence to address the pole-placement problem in structured systems, the concept of SFMs is used in this paper. Let $[K]:= \{K:K_{ij}=0$, if $\bar{K}_{ij}=0\}$. Structured systems with no SFMs are defined as follows:
\begin{defn}\label{defn:one}
The structured system ($\bar{A},\bar{B}, \bar{C}$) and feedback matrix $\bar{K}$ is said to have no structurally fixed modes if there exists a numerical realization (A,B,C) of ($\bar{A},\bar{B}, \bar{C}$) such that
$\cap_{K\in [K]} \sigma(A+BKC) = \emptyset$, where $\sigma(T)$ denotes the set of eigenvalues of a square matrix T.
\end{defn}
Given a structured system ($\bar{A},\bar{B}, \bar{C}$) and a cost matrix $P$, our aim is to find a minimum cost set of feedback edges such that the closed-loop system denoted by ($\bar{A},\bar{B}, \bar{C}, \bar{K}$) has no SFMs. The set of all feedback matrices $\bK$ that satisfies the no-SFMs criteria is denoted by the set $\K$. In other words, $\K:= \{\bK \in \{0,\star \}^{m\times p}$: $(\bar{A},\bar{B},\bar{C},\bar{K})$ has no SFMs$\}$ is the set of all feasible solutions to the optimization problem discussed in this paper. The cost associated with the feedback matrix $\bar{K}$ is denoted by $P(\bar{K})$, where $P(\bar{K}) = \sum_{(i,j):\bar{K}_{ij} = \star} P_{ij}$. The optimization problem addressed in this paper is given below.
\begin{prob}\label{prob:one}
Given a structured system $(\bA, \bB={\mathbb{I}_m}, \bC={\mathbb{I}_p})$, find $\bK^\* ~\in~ \arg\min\limits_{ \bK \in \K} P(\bar{K})$.
\end{prob}
Here $\mathbb{I}_m$ and $\mathbb{I}_p$ denote $m$ dedicated inputs and $p$ dedicated outputs, respectively. A dedicated input is an input which {\it actuates} a single state directly and a dedicated output is an output that {\it senses} a single state directly. Thus there is exactly one $\star$ entry in each column of $\mathbb{I}_m$ and exactly one $\star$ entry in each row of $\mathbb{I}_{p}$. Problem~\ref{prob:one} is referred to as the {\em optimal feedback selection for dedicated i/o problem}. If $P(\bar{K}^\*)= \infty$, then we say that arbitrary pole-placement is not possible for $(\bA, {\mathbb{I}_m},{\mathbb{I}_p})$ and cost matrix $P$. In the section below, we give few notations and preliminaries used in the sequel.
\section{Notations, Preliminaries and Existing Results}\label{sec:prelim}
For describing various graph theoretic conditions used in the analysis of structured systems, we first elaborate on few notations and constructions. 
A digraph $\D(\bA):=(V_X,E_X)$, where $V_X=\{x_1,\dots,x_n\}$ and an edge $(x_j,x_i) \in E_X$ if $\bA_{ij} =\star$. The edge $(x_j,x_i)$ directed from $x_j$ towards $x_i$ implies that state $x_j$ can {\em influence} state $x_i$. Hence the influence of states on other states is captured in the digraph $\D(\bA)$. Similarly, we define $\D(\bA,\bB,\bC):=(V_X \cup V_Y \cup V_U, E_X \cup E_Y \cup E_U)$, where $V_U=\{u_1,\dots,u_m\}$ and $V_Y=\{y_1,\dots,y_p\}$. An edge $(u_j,x_i)\in E_U$ if $\bB_{ij}=\star$ and an edge $(x_j,y_i)\in E_Y$ if $\bC_{ij}=\star$. Next, we define $\D(\bA,\bB,\bC,\bK):=(V_X \cup V_Y \cup V_U, E_X \cup E_Y \cup E_U\cup E_K)$, where a feedback edge $(y_j,u_i) \in E_K$ if $\bK_{ij}=\star$. Thus $\D(\bA,\bB,\bC,\bK)$ captures the influence of states, inputs, outputs and feedback connections. The digraphs $\D(\bA)$ and $\D(\bA,\bB,\bC,\bK)$ are referred to as the {\em state digraph} and the {\em closed-loop system digraph}, respectively. A digraph is said to be strongly connected if there exists a path from $v_i$ to $v_k$ for each ordered pair of vertices $(v_i,v_k)$ in the digraph. A strongly connected component (SCC) is a subgraph that consists of a maximal set of strongly connected vertices. Necessary and sufficient condition for the no-SFMs criteria is described below.
\begin{prop}\cite[Theorem 4]{PicSezSil:84}:\label{prop:one} 
A structured system ($\bar{A},\bar{B}, \bar{C}$) has no SFMs with respect to an information pattern $\bar{K}$ if and only if the following conditions hold:

\begin{enumerate}
\item[(a)] in the digraph $\D(\bar{A},\bar{B}, \bar{C}, \bar{K})$, each state node $x_i$ is contained in an SCC which includes an edge from $E_K$, 
\item[(b)] there exists a finite node disjoint union of cycles $C_g=(V_g,E_g)$ in $\D(\bar{A},\bar{B}, \bar{C}, \bar{K})$, where $g$ belongs to the set of natural numbers such that $V_X \subset \cup_gV_g.$ 
\end{enumerate}
\end{prop}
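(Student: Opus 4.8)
The plan is to establish the biconditional by connecting the algebraic definition of structurally fixed modes (Definition~\ref{defn:one}) to the two combinatorial conditions through a generic-rank analysis of the closed-loop system. First I would recall that, by Definition~\ref{defn:one}, the system $(\bA,\bB,\bC)$ with pattern $\bK$ has no SFMs precisely when some numerical realization $(A,B,C)$ satisfies $\cap_{K\in[K]}\sigma(A+BKC)=\emptyset$. A scalar $\lambda$ is a fixed mode exactly when $\det(\lambda I - A - BKC)=0$ for \emph{every} admissible $K\in[K]$; hence ``no SFMs'' is equivalent to the nonexistence of such a $\lambda$ for a suitably chosen realization. Since structural properties are generic, I would replace ``there exists a realization'' by the non-vanishing of certain polynomials in the free parameters, which in turn can be read off the closed-loop digraph $\D(\bA,\bB,\bC,\bK)$.

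The heart of the argument is a graph-theoretic (Mason/Coates-type) expansion of the characteristic polynomial $\det(sI - A - BKC)$ as a signed sum over families of node-disjoint cycles in $\D(\bA,\bB,\bC,\bK)$, where cycles may traverse input, output, and feedback edges. I would argue that condition~(b) --- a node-disjoint union of cycles covering all of $V_X$ --- is exactly what guarantees that this expansion generically attains full degree $n$ in $s$ (equivalently, that the associated structured matrix has generic rank $n$). If (b) fails, no cycle family covers $V_X$, the relevant monomials cancel structurally, the determinant vanishes for every realization (generically pinning a mode at the origin), and a fixed mode is forced; this proves the necessity of (b).

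For condition~(a), I would show that if some state node $x_i$ lies in no SCC of $\D(\bA,\bB,\bC,\bK)$ containing an edge of $E_K$, then after permuting the states the closed-loop matrix $A+BKC$ acquires a block-triangular form in which one diagonal block is independent of $K$; the eigenvalues of that block are common to every $K\in[K]$ and hence constitute fixed modes, establishing the necessity of (a). Conversely, for sufficiency I would assume (a) and (b) and build a realization with algebraically independent (``generic'') parameters: condition~(a) ensures every state's mode can be displaced by feedback through the cycle closing around its SCC, while (b) supplies full generic rank so that no mode is stuck at the origin; together they allow one to perturb the spectrum so that $\cap_{K\in[K]}\sigma(A+BKC)=\emptyset$.

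The step I expect to be the main obstacle is the sufficiency direction: turning the two combinatorial conditions into an explicit (or existentially guaranteed) numerical realization for which no eigenvalue survives across the \emph{entire} admissible class $[K]$. This requires controlling the simultaneous effect of all of $[K]$ rather than a single $K$, and invoking genericity with care --- showing that the finitely many polynomial coincidences that would create a fixed mode cannot all be forced by the structure once (a) and (b) hold. The cycle-family determinant expansion is the tool that makes this genericity bookkeeping tractable.
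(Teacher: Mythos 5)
This proposition is not proved in the paper at all: it is imported verbatim as Theorem~4 of \cite{PicSezSil:84}, so there is no in-paper argument to compare your proposal against. Measured against the classical proof in that reference, your outline follows the standard route: the Coates/Mason-type expansion of $\det(sI-A-BKC)$ over families of node-disjoint cycles is indeed how condition~(b) is tied to the generic non-vanishing of the low-order coefficients of the characteristic polynomial, and a block-triangular decomposition exhibiting a diagonal block independent of $K$ is the standard mechanism behind condition~(a). One small imprecision: $\det(sI-A-BKC)$ is always monic of degree $n$, so (b) does not control the degree in $s$; what it controls is the constant term, i.e.\ the generic rank of $A+BKC$, which is what your parenthetical correctly restates --- the ``full degree $n$'' phrasing should be dropped.

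The substantive gap is the sufficiency direction, which you yourself flag as the main obstacle but then resolve only with the assertion that (a) and (b) ``allow one to perturb the spectrum.'' That does not yet exclude a fixed mode sitting at a nonzero location $\lambda$ for every $K\in[K]$ of a generic realization. The classical argument closes this by first passing to the algebraic characterization of fixed modes as rank deficiencies of the bordered matrices built from $A-\lambda I$ together with complementary column/row selections of $B$ and $C$ (the Anderson--Clements/Davison--Wang criterion), and then showing that conditions~(a) and~(b) force all of these ranks to be generically full; the cycle-family expansion alone handles $\lambda=0$ but not the complementary-subsystem obstruction that (a) is really about. Without some such bridge between the two digraph conditions and the \emph{entire} admissible class $[K]$, the sufficiency half of your plan is a statement of intent rather than a proof.
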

The conditions given in Proposition~\ref{prop:one} thus serve as conditions for checking existence of SFMs in the closed-loop system. For verifying condition~(a), one has to find all the SCCs in the digraph $\D(\bA,\bB,\bC,\bK)$. If each SCC has atleast one feedback edge present in it, then condition~(a) is satisfied. Concerning condition~(b), an equivalent matching\footnote{A matching is a set of edges such that no two edges share the same end point. For a bipartite graph $G_{\scriptscriptstyle B}=((V_{\scriptscriptstyle B}, V'_{\scriptscriptstyle B}), \E_{\scriptscriptstyle B})$, a perfect matching is a matching whose cardinality is equal to min($|V_{\scriptscriptstyle B}|,|V'_{\scriptscriptstyle B}|$).} condition using the bipartite\footnote{ A bipartite graph $G_{\scriptscriptstyle B}=((V_{\scriptscriptstyle B}, V'_{\scriptscriptstyle B}), \E_{\scriptscriptstyle B})$ is a graph satisfying $V_{\scriptscriptstyle B}\cap V'_{\scriptscriptstyle B}=\emptyset$ and $\E_{\scriptscriptstyle B} \subseteq V_{\scriptscriptstyle B}\times V'_{\scriptscriptstyle B}$.} graph $\B(\bA,\bB,\bC,\bK)$ exists \cite{MooChaBel:17_Automatica}. The construction of bipartite graph $\B(\bA,\bB,\bC,\bK)$ is as follows. We first define {\em state bipartite graph} $\B(\bA):=((V_{X'}, V_X), \pazocal{E}_{X})$, where $V_{X'}=\{x'_1,\dots,x'_n\}$, $V_X=\{x_1,\dots,x_n\}$ and $(x'_j,x_i)\in \pazocal{E}_X \Leftrightarrow (x_i,x_j)\in {E}_X$. Now, we define  $\B(\bA,\bB,\bC,\bK):=((V_{X'}\cup V_{U'}\cup V_{Y'},V_{X}\cup V_{U}\cup V_{Y}),\E')$, where $V_{U'}=\{u'_1,\dots,u'_m\}$, $V_{Y'}=\{y'_1,\dots,y'_p\}$, $V_{U}=\{u_1,\dots,u_m\}$, $V_{Y}=\{y_1,\dots,y_p\}$ and $\E'=(\E_{X}\cup \E_{U}\cup \E_{Y} \cup \E_{K} \cup \E_{\mathbb{U}}\cup \E_{\mathbb{Y}})$. Also, $(x'_i,u_j)\in \E_U \Leftrightarrow (u_j,x_i)\in E_U$, $(y'_j,x_i)\in \E_Y \Leftrightarrow (x_i,y_j)\in E_Y$ and $(u'_i,y_j)\in \E_K \Leftrightarrow (y_j,u_i)\in E_K$. Moreover, $\E_{\mathbb{U}}$ includes edges $(u'_i,u_i)$, for $i=1,\dots,m$ and $\E_{\mathbb{Y}}$ includes edges $(y'_i,y_i)$, for $i=1,\dots,p$. 

\begin{prop}\cite[Theorem~3]{MooChaBel:17_Automatica} \label{prop:two}
Consider a closed-loop structured system $(\bA, \bB, \bC, \bK)$. The bipartite graph $\B(\bA, \bB, \bC, \bK)$ has a perfect matching if and only if all state nodes are spanned by disjoint union of cycles in $\D(\bA, \bB, \bC, \bK)$.
\end{prop}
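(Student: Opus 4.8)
The plan is to establish the equivalence by exhibiting an explicit correspondence between perfect matchings of $\B(\bA,\bB,\bC,\bK)$ and node-disjoint cycle covers of the state nodes in $\D(\bA,\bB,\bC,\bK)$, reading each matched bipartite edge as a choice of an incoming or an outgoing arc. Concretely, every arc $(v,w)$ of $\D$ corresponds, under the construction of $\B$, to a unique bipartite edge whose right endpoint is $v$ and whose left endpoint is $w'$: an arc $(x_i,x_j)$ maps to $(x'_j,x_i)\in\E_X$, an arc $(u_j,x_i)$ to $(x'_i,u_j)\in\E_U$, an arc $(x_i,y_j)$ to $(y'_j,x_i)\in\E_Y$, and an arc $(y_j,u_i)$ to $(u'_i,y_j)\in\E_K$. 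Thus matching a right vertex $v$ amounts to selecting an out-arc of $v$, and matching a left vertex $w'$ amounts to selecting an in-arc of $w$. The only edges lying outside this dictionary are the artificial self-edges $(u'_i,u_i)\in\E_{\mathbb{U}}$ and $(y'_i,y_i)\in\E_{\mathbb{Y}}$, which I will read as declaring the corresponding input or output node \emph{idle}.

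For the forward direction, suppose $\B$ admits a perfect matching $M$. Since no artificial edge is incident to a state copy, every right state vertex $x_i$ and every left state vertex $x'_i$ is matched through a genuine arc, so in the selected subgraph each state node acquires exactly one out-arc and one in-arc. For an input node $u_i$ the artificial edge $(u'_i,u_i)$ saturates both $u'_i$ and $u_i$ simultaneously, so either $u_i$ is idle (the artificial edge lies in $M$, giving in-degree and out-degree zero) or both $u'_i$ and $u_i$ are matched by genuine edges, giving in-degree and out-degree exactly one; the identical all-or-nothing dichotomy holds for each output node. Hence the subgraph of selected arcs has every vertex with equal in- and out-degree, each lying in $\{0,1\}$, and with every state vertex at $1$. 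I would then invoke the elementary fact that such a subgraph is a vertex-disjoint union of cycles together with isolated (idle) nodes, whence the cycles span $V_X$, establishing condition~(b).

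For the converse, start from a node-disjoint union of cycles covering $V_X$ (possibly routed through some input and output nodes). Map each arc traversed by the cycles to its unique bipartite counterpart via the dictionary above, and match every input or output node lying on no cycle through its artificial self-edge. Because the cycles are vertex-disjoint, each participating node has exactly one in-arc and one out-arc, so its left copy and right copy are each matched exactly once by genuine edges; each idle input/output node has both copies matched by its artificial edge; and no node is hit by both a genuine and an artificial edge. This produces a collection of bipartite edges covering every left and every right vertex exactly once, i.e.\ a perfect matching of $\B$.

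The main obstacle I anticipate is the careful bookkeeping around the artificial edge sets $\E_{\mathbb{U}}$ and $\E_{\mathbb{Y}}$: one must argue the all-or-nothing behaviour precisely, namely that a single artificial edge saturates both copies of an i/o node, since it is exactly this feature that permits input and output vertices to sit outside the cycle cover while state vertices cannot. The residual work, verifying that the arc-to-edge dictionary is a bijection on the relevant edge classes and that the condition in-degree${}={}$out-degree${}\le 1$ forces a node-disjoint union of cycles, is routine once the correspondence is in place.
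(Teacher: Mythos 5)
Your argument is correct. Note that the paper itself offers no proof of this proposition --- it is imported verbatim as Theorem~3 of the cited reference --- so there is nothing internal to compare against; your bijection between matched bipartite edges and selected arcs (right copy $=$ out-arc choice, primed left copy $=$ in-arc choice, with the self-edges $(u'_i,u_i)$ and $(y'_i,y_i)$ encoding idle i/o nodes) is exactly the standard K\"onig-type correspondence used to prove such matching/cycle-cover equivalences, and your handling of the all-or-nothing saturation of the two copies of each input and output node is the one point that genuinely needs care and you get it right.
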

If $\B(\bar{A})$ has a perfect matching, then $\B(\bA,\bB,\bC,\bK)$ has a perfect matching without using any feedback edge. This implies that condition~(b) is satisfied without using any feedback edge. This is because in $\B(\bA,\bB,\bC,\bK)$, $(u'_i,u_i) \in \E_{\mathbb{ U}}$, for all $i \in \{1,\dots,m\}$, and $(y'_i,y_i) \in \E_{\mathbb{Y}}$, for all $i \in \{1,\dots,p\}$. 
Thus a perfect matching in $\B(\bA)$ is a sufficient condition for satisfying condition~(b).

Since $m=O(n)$ and $p=O(n)$, finding SCCs in $\D(\bA,\bB,\bC,\bK)$ has $O(n^2)$ complexity \cite{CorLeiRivSte:01}. Verifying condition (b) has a complexity $O(n^{2.5})$ using the matching condition given in Proposition~\ref{prop:two} \cite{Die:00}. Hence, given $(\bA,\bB,\bC)$ and feedback matrix $\bK$, verifying the conditions in Proposition~1 has complexity $O(n^{2.5})$. Our objective in this paper is to obtain an optimal (in the sense of cost) set of feedback connections that guarantees arbitrary pole-placement. In other words, we need to obtain an optimal set of feedback edges that satisfies the no-SFMs criteria. Even though verifying existence of SFMs is of polynomial complexity, identifying an optimal feedback matrix may not be computationally easy. Specifically, in large scale systems of huge system dimension, an exhaustive search based technique to obtain an optimal solution to Problem~\ref{prob:one} is not computationally feasible. Before proposing a framework to solve Problem~\ref{prob:one}, we first analyze the tractability of Problem~\ref{prob:one} in the section below.
\section{Complexity of Optimal Feedback Selection Problem with Dedicated Inputs and Outputs}\label{sec:NP}
In this section, we prove the NP-hardness of Problem 1. The hardness result is obtained using a reduction of a known NP-hard problem, {\em the weighted set cover problem}, to an instance of Problem~\ref{prob:one}. The weighted set cover problem is a standard NP-hard problem with numerous applications \cite{Chv:79}. It is described here for the sake of completeness.
Given a universe $\pazocal{U}$ of $N$ elements $\pazocal{U}=\{1,\dots,N\}$, and a collection of sets $\pazocal{P}=\{\Ss_1,\Ss_2,....\Ss_r\}$, where $\Ss_i \subseteq \U$ and $\cup_{\Ss_i\in \pazocal{P}} \Ss_i = \pazocal{U}$ and a weight function $w:\pazocal{P}\rightarrow \mathbb{R}$, the objective is to find a set $\pazocal{S^\*}\subseteq \pazocal{P}$ such that $\cup_{\Ss_i\in \Ss^\*} \Ss_i  =\pazocal{U}$ and $\sum_{\Ss_i\in \Ss^\*} w(\Ss_i) \leqslant \sum_{\Ss_i\in \widetilde{\Ss}} w(\Ss_i) $, where $\cup_{\Ss_i \in \widetilde{\Ss}} = \pazocal{U}$.
\begin{algorithm}[t]
\caption{Pseudo-code for reducing the weighted set cover to an instance of Problem~\ref{prob:one}}\label{algo:one}
\textbf{Input}: A weighted set cover problem with universe $\pazocal{U}=\{1,2,\dots,N\}$, sets $\pazocal{P}=\{\Ss_1,\dots,\Ss_r\}$ and a weight function $w$ associated with each set in $\P$\\
\textbf{Output}: A structured system $(\bar{A},\bar{B} = \mathbb{I}_m,\bar{C} = \mathbb{I}_p)$ and a feedback cost matrix $P$

\begin{algorithmic}[1]
\State We define a structured system $(\bar{A},\bar{B},\bar{C})$ as follows:
{\scalefont{0.9}{
\State $\bar{A}_{ij} \leftarrow\begin{cases} \star$, for $i=j$, $\\ \star$, for $i\in \{1,\dots,N\}$ and $j = N{+}r{+}1$,  $\\ \star$, for $i\in \{N{+}1,\dots,N{+}r\}, j \in \Ss_{i-N}, \\ 0$, otherwise.$ \label{step:A}
\end{cases}$
\State $\bar{B}_{ij} \leftarrow \begin{cases} \star$, for $i \in \{N{+}1,\dots,N{+}r{+}1\}$ and $j=i-N, \\ 0$, otherwise.$\label{step:B}
\end{cases} $
\State $\bar{C}_{ij} \leftarrow \begin{cases} \star$, for $j\in \{N{+}1\dots,N{+}r\}$ and $i=j-N, \\ 0,$ otherwise$.\label{step:C}
\end{cases}$
\State $P_{ij} \leftarrow \begin{cases} w(\Ss_{j}), j\in \{1,\dots\,r\}$ and $i=r{+}1, \\ 0,$ for $i,j \in \{1,\dots,r\}$ and $i{=}j, \\ \infty,$ otherwise. $\label{step:P}
\end{cases} $
\State Let $\bK$ be a solution to Problem~1 for $(\bar{A},\bar{B},\bar{C})$ and cost matrix $P$ constructed above\label{step:K}
\State Sets selected under $\bar{K}, \Ss(\bar{K}){\leftarrow}\{\Ss_{j-N}$: $\bar{K}_{ij}=\star$ $\&$ $i\neq j\}$\label{step:S}
\State Weight of the set $w(\Ss(\bar{K}))\leftarrow \sum_{\Ss_i\in \Ss(\bar{K})} w(\Ss_i)$\label{step:w}
}}
\end{algorithmic}
\end{algorithm}

The pseudo-code showing a polynomial time reduction of the weighted set cover problem to an instance of Problem~\ref{prob:one} is presented in Algorithm~\ref{algo:one}. From a general instance of the weighted set cover problem, we construct an instance of Problem~\ref{prob:one}. The structured system $(\bar{A},\bar{B},\bar{C})$ has states $x_1,\dots,x_{N+r+1}$, inputs $u_{1},\dots,u_{r+1}$ and outputs $y_{1},\dots,y_{r}$. The structured state matrix $\bar{A} \in \{0,\star\}^{(N{+}r{+}1)\times(N{+}r{+}1)}$ is constructed as follows. For ease of understanding, we refer $x_1,\dots,x_{N}$, as the {\em element nodes} and $x_{N+1},\dots,x_{N+r}$, as the {\em set nodes}. The {\em element nodes} correspond to the elements of the universe $\pazocal{U}$ and every node in $\{x_1,\dots,x_N\}$ has an edge from the node $x_{N+r+1}$. The {\em set nodes} correspond to the sets of the weighted set cover problem. A set node $x_{N+k}$ has an edge from element node $x_j$ if element $j \in \U$ belongs to set $\Ss_{k} \in \pazocal{P}$. This completes the construction of $\bA$ (Step~\ref{step:A}).

 The structured matrix $\bar{B} \in \{0,\star\}^{(N+r+1)\times (r+1)}$ corresponds to the $(r+1)$ dedicated input nodes which are fed to set nodes $x_{N+1},\dots,x_{N+r}$ and $x_{N+r+1}$ (Step~\ref{step:B}). The structured matrix $\bar{C} \in \{0,\star\}^{r\times(N+r+1)}$ corresponds to the $r$ dedicated output nodes which come out from the $r$ set nodes $x_{N+1},\dots,x_{N+r}$ respectively (Step~\ref{step:C}). Thus for the constructed structured system, $n=N+r+1$, $m=r+1$ and $p=r$. Corresponding to the $(r+1)$ inputs and $r$ outputs, the feedback cost matrix $P \in \mathbb{R_+}^{(r+1) \times r}$ is defined as follows. We assign $P_{ij}=0$, for $i,j\in \{1,\dots,r\}$ and $i=j$. For $i=r+1$ and $j\in \{1,\dots,r\}$,  $P_{ij}$ is assigned the weight of the set $\Ss_{j}$ (Step~\ref{step:P}). The motive for defining such a feedback cost structure is the following. In a solution to Problem~\ref{prob:one}, if we select a feedback edge connecting output connected to the set node $x_{N+k}$ to $u_{r+1}$, it is analogous to selecting the set $\Ss_{k}$ in the weighted set cover problem. The zero cost feedback edges take into account the set nodes $x_{N+j}$ for which the feedback edge going from output of  $x_{N+j}$ to $u_{r+1}$ is not selected. Given a solution $\bK$ to Problem~\ref{prob:one}, the sets selected under $\bK$ is defined as $\Ss(\bK)$. Here $\Ss(\bK)$ consists of all those sets whose corresponding set node has its dedicated output connected to the input $u_{r+1}$ in $\bK$ (Step~\ref{step:S}). Further, the weight $w(\Ss(\bK))$ is defined as shown in Step~\ref{step:w}. An illustrative example demonstrating the construction given in Algorithm~\ref{algo:one} is given in Figure~\ref{fig:sys1}.
\begin{figure}[t]
\centering
\begin{minipage}{.22\textwidth}
\begin{tikzpicture}[scale=0.60, ->,>=stealth',shorten >=1pt,auto,node distance=1.65cm, main node/.style={circle,draw,font=\scriptsize\bfseries}]
\definecolor{myblue}{RGB}{80,80,160}
\definecolor{almond}{rgb}{0.94, 0.87, 0.8}
\definecolor{bubblegum}{rgb}{0.99, 0.76, 0.8}
\definecolor{columbiablue}{rgb}{0.61, 0.87, 1.0}

  \fill[almond] (-1,-1.5) circle (7.0 pt);
  \fill[almond] (-2,-1.5) circle (7.0 pt);
  \fill[almond] (0,0) circle (7.0 pt);
  \fill[almond] (0,-1.5) circle (7.0 pt);
  \fill[almond] (1,-1.5) circle (7.0 pt);
  \fill[almond] (2,-1.5) circle (7.0 pt);
  \node at (-2,-1.5) {\scriptsize $x_1$};
  \node at (-1,-1.5) {\scriptsize $x_2$};
  \node at (0,0) {\scriptsize $x_9$};
  \node at (0,-1.5) {\scriptsize $x_3$};
  \node at (1,-1.5) {\scriptsize $x_4$};
  \node at (2,-1.5) {\scriptsize $x_5$};

  \fill[bubblegum] (0,1) circle (7.0 pt);
  \fill[bubblegum] (-2.5,-3) circle (7.0 pt);
  \fill[bubblegum] (0.5,-3) circle (7.0 pt);
  \fill[bubblegum] (2.5,-3) circle (7.0 pt);
   \node at (0,1) {\scriptsize $u_4$};
   \node at (-2.5,-3) {\scriptsize $u_1$};
   \node at (0.5,-3) {\scriptsize $u_2$};
   \node at (2.5,-3) {\scriptsize $u_3$};
   
  \fill[almond] (-1.5,-3) circle (7.0 pt);
  \fill[almond] (-0.5,-3) circle (7.0 pt);
  \fill[almond] (1.5,-3) circle (7.0 pt);
  \fill[columbiablue] (-1.5,-4) circle (7.0 pt);
  \fill[columbiablue] (-0.5,-4) circle (7.0 pt);
  \fill[columbiablue] (1.5,-4) circle (7.0 pt);
   
   \node at (-1.5,-3.0) {\scriptsize $x_6$};
   \node at (-0.5,-3.0) {\scriptsize $x_7$};
   \node at (1.5,-3.0) {\scriptsize $x_8$};
   
   \node at (-1.5,-4.0) {\scriptsize $y_1$};
   \node at (-0.5,-4.0) {\scriptsize $y_2$};
   \node at (1.5,-4.0) {\scriptsize $y_3$};
   
  \draw (-2.3,-3)  ->   (-1.7,-3);
  \draw (0.3,-3)  ->   (-0.3,-3);
  \draw (2.3,-3)  ->   (1.7,-3);   
  

  \draw (-1.5,-3.22)  ->   (-1.5,-3.8);
  \draw (-0.5,-3.22)  ->   (-0.5,-3.8);
  \draw (1.5,-3.22)  ->   (1.5,-3.8);

  \draw (0,0.75)  ->   (0,0.25);
  \draw (0,-0.25)  ->   (-2,-1.25);
  \draw (0,-0.25)  ->   (-1,-1.25);
  \draw (0,-0.25)  ->   (0,-1.25);
  \draw (0,-0.25)  ->   (1,-1.25);
  \draw (0,-0.25)  ->   (2,-1.25);

  \draw (-2,-1.75)  ->   (-1.5,-2.75);
  \draw (-1,-1.75)  ->   (-1.5,-2.75);
  \draw (-1,-1.75)  ->   (-0.5,-2.75);
  \draw (0,-1.75)  ->   (-0.5,-2.75);
  \draw (1,-1.75)  ->   (1.5,-2.75);
  \draw (2,-1.75)  ->   (1.5,-2.75);
  \draw (0,-1.75)  ->   (1.5,-2.75);

\path[every node/.style={font=\sffamily\small}]
%

(-1.3,-2.85) edge[loop above] (-1.3,-2.85)
(-0.3,-2.85) edge[loop above] (-0.3,-2.85)
(1.7,-2.85) edge[loop above] (1.7,-2.85)
 
(-2.05,-1.25) edge[loop above] (-2,-1)
(-1.05,-1.25)edge[loop above] (-1,-1)
(0.1,0.25) edge[loop above]  (0,0)
(0.1,-1.25) edge[loop above]  (0,-1)
(1.0,-1.25) edge[loop above]  (1,-1)
(2.0,-1.25) edge[loop above]  (2,-1);
\end{tikzpicture}
\end{minipage}\hspace{0.1 cm}
\begin{minipage}{.2\textwidth}
\caption{\small Digraph $\D(\bA, \bB, \bC)$ constructed using Algorithm~\ref{algo:one} for a weighted set cover problem with $\U = \{1,\ldots,5\}$, $\P= \{\Ss_1, \Ss_2, \Ss_3\}$, where $\Ss_1 = \{1,2 \}$, $\Ss_2 = \{2,3\}$ and $\Ss_3 = \{3,4,5\}$.}
\label{fig:sys1}
\end{minipage}
\vspace*{-5 mm}
\end{figure} 

\begin{lem}\label{lem:one}
Consider the weighted set cover problem with $\pazocal{U}=\{1,\dots,N\}$, sets  $\pazocal{P}=\{\Ss_1,\dots,\Ss_r\}$ and weight function $w$. Let $(\bA, \bB={\mathbb{I}_m}, \bC={\mathbb{I}_p})$ and $P$ be the structured system and feedback cost matrix constructed using Algorithm~\ref{algo:one}. If $\bar{K}$ is a solution to Problem~\ref{prob:one}, then $\Ss(\bar{K})$ covers $\pazocal{U}$.
\end{lem}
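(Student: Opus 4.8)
The plan is to derive the covering property purely from the feasibility of $\bK$, specifically from condition~(a) of Proposition~\ref{prop:one} applied to the element nodes $x_1,\dots,x_N$. Since any solution to Problem~\ref{prob:one} lies in $\K$, the closed-loop digraph $\D(\bA,\bB,\bC,\bK)$ has no SFMs, so every state node --- in particular every element node $x_i$ --- must sit in an SCC that contains a feedback edge from $E_K$. I would therefore show that condition~(a) forces, for each $i\in\{1,\dots,N\}$, the selection of some set $\Ss_k$ with $i\in\Ss_k$, which is exactly the statement that $\Ss(\bK)$ covers $\U$.

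First I would catalogue the edges incident to an element node $x_i$ in $\D(\bA,\bB,\bC,\bK)$ using the orientation convention $\bA_{ij}=\star \Leftrightarrow (x_j,x_i)\in E_X$. By Steps~\ref{step:A}--\ref{step:C}, apart from its self-loop the only in-edge of $x_i$ is $x_{N+r+1}\to x_i$, and its only out-edges go to the set nodes $x_{N+k}$ with $i\in\Ss_k$. Hence any cycle through $x_i$ (other than the trivial self-loop, which carries no feedback edge) must enter $x_i$ from $x_{N+r+1}$ and leave via some set node $x_{N+k}$ with $i\in\Ss_k$. I would then trace forward from such a set node: its only non-self out-edge is to its dedicated output $y_k$, and the only way to leave $y_k$ is along a feedback edge, either $(y_k,u_{r+1})$ or $(y_k,u_k)$.

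The key step is to observe that the zero-cost edge $(y_k,u_k)$ cannot help: it only closes the short cycle $u_k\to x_{N+k}\to y_k\to u_k$, which contains neither $x_i$ nor $x_{N+r+1}$, and set nodes admit no edges back to element nodes or to each other. Thus the only way for $x_i$ to return to $x_{N+r+1}$ --- and so to lie in a common SCC with it and with a feedback edge --- is through a \emph{selected} edge $(y_k,u_{r+1})$ followed by $u_{r+1}\to x_{N+r+1}\to x_i$, for some $k$ with $i\in\Ss_k$. Consequently, condition~(a) for $x_i$ guarantees a set $\Ss_k\ni i$ whose output is fed to $u_{r+1}$ in $\bK$, i.e. $\Ss_k\in\Ss(\bK)$. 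Since this holds for every $i\in\{1,\dots,N\}$, the selected sets cover $\U$.

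I expect the main obstacle to be the exhaustive but careful verification that \emph{no} other cycle structure places an element node in an SCC with a feedback edge --- in particular ruling out any route that wanders among set nodes or exploits the zero-cost edges $(y_k,u_k)$ to reach $x_{N+r+1}$. This amounts to checking the reachability relations dictated by Steps~\ref{step:A}--\ref{step:C} and keeping the edge-orientation convention straight throughout; once the adjacency structure is pinned down, the reduction of condition~(a) to ``$i$ is covered by a selected set'' is immediate. Note that optimality of $\bK$ is not needed here: feasibility (membership in $\K$, hence condition~(a)) alone yields the covering.
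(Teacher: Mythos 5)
Your proposal is correct and follows the same route as the paper's own proof: both arguments invoke only feasibility (condition~(a) of Proposition~\ref{prop:one}) at each element node $x_i$, observe that $x_i$ has no dedicated input or output and can only be entered from $x_{N+r+1}$ and exited toward set nodes $x_{N+k}$ with $i\in\Ss_k$, and conclude that the SCC containing $x_i$ must use a selected edge $(y_k,u_{r+1})$, whence $\Ss_k\in\Ss(\bK)$ covers $i$. Your write-up merely makes explicit the adjacency bookkeeping (in particular, why the zero-cost edges $(y_k,u_k)$ cannot help) that the paper compresses into a single sentence.
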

\begin{proof}
Here we assume that a $\bK$ is a solution to Problem~\ref{prob:one} and then show that $\pazocal{S}(\bK)$ is a solution to the weighted set cover problem. Consider an arbitrary element $j \in \pazocal{U}$. We show that $\Ss(\bar{K})$ covers the element $j$. Consider node $x_j$. Since $\bar{K}$ is a solution to Problem~\ref{prob:one}, it follows that $x_j$ must lie in an SCC with atleast one feedback edge in it. Notice that node $x_j$ does not have an input or output connected directly to it. Thus the only way for node $x_j$ to satisfy condition~(a) in Proposition~\ref{prop:one} is using a feedback edge connecting the output of some set node $x_{k}$, where $k \in \{N+1,\dots,N+r\}$, to the input node $u_{r+1}$ such that $(x_j,x_k)\in E_X$, i.e., $j\in \Ss_{k-N}$. Using Step~\ref{step:S} of Algorithm~\ref{algo:one}, this implies that set $\Ss_{k-N} \in \pazocal{S}(\bK)$. Thus element $j$ is covered by $\pazocal{S}(\bK)$. Since element $j$ is arbitrary, the proof follows.
\end{proof}
\begin{theorem}\label{theorem:one}
Consider a structured system $(\bA,\bB={\mathbb{I}_m},\bC={\mathbb{I}_p})$ and a feedback cost matrix $P$. Then, Problem~\ref{prob:one} is NP-hard.
\end{theorem}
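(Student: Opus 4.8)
The plan is to establish NP-hardness by showing that Algorithm~\ref{algo:one} constitutes a valid polynomial-time reduction from the weighted set cover problem to Problem~\ref{prob:one}. Lemma~\ref{lem:one} already supplies one direction: any solution $\bK$ to Problem~\ref{prob:one} induces a set cover $\Ss(\bK)$ of $\U$. To complete the reduction I need the converse and a cost-preservation argument, so that an optimal solution to one problem yields an optimal solution to the other at matching cost.

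First I would verify that the reduction runs in polynomial time: the constructed system has dimension $n = N+r+1$, with $m = r+1$ inputs and $p = r$ outputs, and the matrices $\bA, \bB, \bC, P$ each have $O(n^2)$ entries built by direct inspection of the sets $\Ss_i$, so Algorithm~\ref{algo:one} is clearly polynomial in the input size. Next, I would prove the reverse direction of the correspondence: given any set cover $\widetilde{\Ss} \subseteq \P$ of $\U$, construct a feedback matrix $\bK$ by selecting, for each $\Ss_k \in \widetilde{\Ss}$, the edge $(y_k, u_{r+1})$, and selecting the zero-cost self-loop edges $(y_j,u_j)$ for the remaining set nodes. I must then check that this $\bK$ lies in $\K$, i.e. that $(\bA,\bB,\bC,\bK)$ has no SFMs, by verifying conditions (a) and (b) of Proposition~\ref{prop:one}. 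For condition~(a), each element node $x_j$ is covered by some set $\Ss_k \in \widetilde{\Ss}$, and the edge $(y_k,u_{r+1})$ together with the path $x_j \to x_{N+k} \to y_k \to u_{r+1} \to x_{N+r+1} \to x_j$ places $x_j$ in an SCC containing a feedback edge; the set nodes and $x_{N+r+1}$ similarly sit in this SCC. For condition~(b), I would exhibit a disjoint union of cycles spanning $V_X$, using the self-loops $\bA_{ii}=\star$ (which give trivial cycles at every state) to cover all state nodes, so condition~(b) holds automatically. The cost of this $\bK$ is exactly $\sum_{\Ss_k \in \widetilde{\Ss}} w(\Ss_k) = w(\widetilde{\Ss})$, since only the $(y_k,u_{r+1})$ edges carry nonzero cost.

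Combining the two directions with cost-preservation, I would argue that $\bK^\*$ is an optimal solution to Problem~\ref{prob:one} if and only if $\Ss(\bK^\*)$ is a minimum-weight set cover, and that $P(\bK^\*) = w(\Ss(\bK^\*))$. In particular, the zero-cost self-loop edges contribute nothing, so the total feedback cost equals precisely the weight of the selected sets; any optimal feedback selection cannot do better than the minimum set cover (by Lemma~\ref{lem:one}), and cannot do worse (by the forward construction). Hence an efficient algorithm for Problem~\ref{prob:one} would solve weighted set cover efficiently, establishing NP-hardness.

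I expect the main obstacle to be the careful verification that the feedback matrix built from a set cover actually satisfies the no-SFMs criteria — specifically pinning down condition~(a) (that \emph{every} state node, including the set nodes $x_{N+k}$ not chosen in the cover and the distribution node $x_{N+r+1}$, lands in an SCC containing a feedback edge) and confirming that the role of the zero-cost edges $(y_j,u_j)$ and the self-loops is exactly what guarantees feasibility without inflating the cost. The forward direction (Lemma~\ref{lem:one}) and the polynomial-time claim are routine; the delicate bookkeeping is in the SCC/cycle structure of the reverse direction and ensuring the cost correspondence is exact rather than merely monotone.
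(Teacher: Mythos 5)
Your proposal is correct and follows essentially the same route as the paper: it uses the Algorithm~\ref{algo:one} reduction, invokes Lemma~\ref{lem:one} for the forward direction, and for the converse builds a feedback matrix from a given cover using the $(y_k,u_{r+1})$ edges plus the zero-cost diagonal edges, verifying conditions (a) and (b) of Proposition~\ref{prop:one} and the exact cost correspondence $P(\bK)=w(\widetilde{\Ss})$. The only cosmetic differences are that the paper phrases optimality as a contradiction argument and certifies condition~(b) via the perfect matching in $\B(\bA)$ rather than directly via the self-loop cycles, which is equivalent by Proposition~\ref{prop:two}.
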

\begin{proof}
The reduction of the weighted set cover problem given in Algorithm~\ref{algo:one} is used for proving the NP-hardness. Let $\bK$ be a solution to Problem~\ref{prob:one}. Now we show that $\Ss(\bK^\*)$ is an optimal solution to the weighted set cover problem, where $\bK^\*$ is an optimal solution to Problem~\ref{prob:one}. By Lemma~1, $\pazocal{S}(\bK^\*)$ is a solution to the weighted set cover problem. Hence feasibility holds.
To prove optimality, assume that $\bK^\*$ denotes an optimal solution to Problem~\ref{prob:one}. The proof follows if $\Ss(\bK^\*)$ is an optimal solution to the weighted set cover problem. We prove this using a contradiction argument. Let the set $\Ss'$ be a cover to the weighted set cover problem, i.e., $\cup_{\Ss_i\in \Ss'} \Ss_i = \U$, such that $w(\Ss') < w(\Ss(\bK^\*))$. Corresponding to the set $\Ss'$, we construct $\bK' \in \{0, \* \}^{(r+1) \times r}$ as follows:

{\scalefont{0.9}{
$\bK'_{ij} = \begin{cases} \star$, for $i=r+1$ and $j:\Ss_j\in \Ss',\\
\star$, for $i=j,\\ 0,$ otherwise.$ 
\end{cases}$
}}

Notice that the cost $P(\bK')=w(\Ss')$ because the feedback edges selected in $\bK'$ of the form $(y_k,u_{r+1})$ have cost $w(\Ss_{k})$ and other feedback edges of the form $(y_k,u_k)$ have zero cost. 
To show that $\bK'\in \K$, for an arbitrary node $x_j$ consider the following three cases: 1)~$j \in\{1,\dots,N\}$, 2)~$j \in \{N+1,\dots,N+r\}$, and 3)~$j = N+r+1$. 

For case~1), consider node $x_j$. Since $\Ss'$ is a solution to the weighted set cover problem, there exists a set $\Ss_{k} \in \Ss'$ such that $j \in \Ss_k$. Corresponding to the set $\Ss_k$, $\bK'_{(r+1)k}=\star$. Hence, $x_j$ lies in an SCC with the feedback edge $(y_k,u_{r+1})$. For case~2), notice that $\bK'_{ii}=\star$ for all $i$. Hence, $x_{N+k}$, for $k=1,\dots,r,$ lies in an SCC with the zero cost feedback edge $(y_k,u_k)$. For case~3), since we have shown that element nodes are part of SCC with feedback edges connected to $u_{r+1}$ which is connected to node $x_{N+r+1}$, node $x_{N+r+1}$ also belongs to an SCC with a feedback edge. Thus all nodes lie in an SCC with a feedback edge and condition~(a) in Proposition~\ref{prop:one} is satisfied. Since $\B(\bA)$ has a perfect matching, condition~(b) in Proposition~\ref{prop:one} is satisfied. Hence $\bK'\in \K$.  By Steps~\ref{step:S} and~\ref{step:w} of Algorithm~\ref{algo:one}, $P(\bK^\*)=w(\Ss(\bK^\*))$. Further, we know that $P(\bK')=w(\Ss')$ and by assumption $w(\Ss') < w(\Ss(\bK^\*))$. Thus $P(\bK') < P(\bK^\*)$, which is a contradiction to the optimality of $\bK^\*$. As a result, given an optimal solution $\bK^\*$, an optimal solution to the weighted set cover problem $\Ss(\bK^\*)$ can be obtained. Hence, Problem~\ref{prob:one} is NP-hard.  
\end{proof}

\begin{rem} \label{rem:one}
Problem~1 is NP-hard even when the cost of the feedback edges are restricted to 1, 0, and $\infty$. For this case, one can reduce the minimum set cover problem to an instance of Problem~\ref{prob:one} in polynomial time using Algorithm~\ref{algo:one}. In this reduction all the feedback edges from $\{y_1,\ldots,y_r\}$ to $u_{r+1}$ are of uniform cost.
\end{rem}

Notice that in the reduction given in Algorithm~1, $\bA$ has all diagonal entries as $\*$'s. Hence $\B(\bA)$ has a perfect matching. Thus, even without using any feedback edges, condition~(b) is satisfied and hence the optimization in Problem~\ref{prob:one} is now to satisfy condition~(a) optimally. The following result holds.
\begin{cor}
Consider the structured system $(\bA,\bB={\mathbb{I}_m},\bC={\mathbb{I}_p})$ and feedback cost matrix $P$. Then, finding a minimum cost feedback matrix that satisfies condition~(a) in Proposition~1 is NP-hard.
\end{cor}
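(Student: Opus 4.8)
The plan is to reuse verbatim the polynomial-time reduction from the weighted set cover problem given in Algorithm~\ref{algo:one}, and to argue that on every instance produced by that reduction, condition~(b) of Proposition~\ref{prop:one} is automatically satisfied by \emph{every} feedback matrix. Consequently, on these instances, satisfying condition~(a) alone is equivalent to satisfying the full no-SFMs criteria, so the problem of minimizing cost subject to condition~(a) coincides exactly with Problem~\ref{prob:one}, which is NP-hard by Theorem~\ref{theorem:one}.

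First I would recall that in the construction of Algorithm~\ref{algo:one}, Step~\ref{step:A} sets $\bA_{ii}=\star$ for all $i$, i.e., every state node carries a self-loop. Hence the state bipartite graph $\B(\bA)$ admits a perfect matching (the matching formed by the $n$ self-loop edges), and as noted in the discussion following Proposition~\ref{prop:two}, a perfect matching of $\B(\bA)$ extends to a perfect matching of $\B(\bA,\bB,\bC,\bK)$ using the edges in $\E_{\mathbb{U}}$ and $\E_{\mathbb{Y}}$, independently of $\bK$. Therefore, for the constructed instance, condition~(b) holds for every feedback matrix $\bK$.

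Next I would observe that this makes the feasible set defined by condition~(a) alone coincide with $\K$ on these instances. In general the set of feedback matrices satisfying condition~(a) is a superset of $\K$ (which requires both (a) and (b)); but since (b) is now vacuous, the two sets are equal. Hence the minimum-cost feedback matrix satisfying condition~(a) is precisely an optimal solution $\bK^\*$ to Problem~\ref{prob:one} on the constructed instance, and by Theorem~\ref{theorem:one} the corresponding set $\Ss(\bK^\*)$ is an optimal weighted set cover. Since the reduction is polynomial, this shows that minimizing cost subject to condition~(a) is NP-hard.

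The step requiring the most care is verifying that condition~(b) is satisfied by \emph{all} feedback matrices and not merely by some, so that restricting attention to condition~(a) does not enlarge the feasible set and thereby change the optimum. This follows cleanly from the self-loops introduced in Step~\ref{step:A}, which guarantee a perfect matching in $\B(\bA)$ using no feedback edges whatsoever; the remainder of the argument is an immediate inheritance from the proof of Theorem~\ref{theorem:one}.
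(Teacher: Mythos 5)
Your proposal is correct and matches the paper's own argument: the paper likewise observes that Step~\ref{step:A} of Algorithm~\ref{algo:one} places a $\star$ on every diagonal entry of $\bA$, so $\B(\bA)$ has a perfect matching, condition~(b) is satisfied with no feedback edges, and the optimization in Problem~\ref{prob:one} on these instances reduces to satisfying condition~(a) optimally, inheriting NP-hardness from Theorem~\ref{theorem:one}. Your added remark that the condition-(a) feasible set coincides with $\K$ on these instances is a slightly more explicit statement of the same point.
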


By Theorem~1, Problem 1 is atleast as hard as the weighted set cover problem. Hence there does not exist a polynomial time algorithm to solve Problem~\ref{prob:one}, unless P=NP. However, approximation algorithms may exist. Before investigating this, the inapproximability of Problem~\ref{prob:one} is analyzed in Theorem~\ref{theorem:two}.

\begin{theorem} \label{theorem:two}
Consider a general instance of the weighted set cover problem and a structured system $(\bA,\bB,\bC)$ and feedback cost matrix $P$ constructed using Algorithm~\ref{algo:one}. Let $\Ss^\*$ and $\bK^\*$ be optimal solutions to the weighted set cover problem and Problem~1, respectively. For $\epsilon \geqslant 1$, if $\bK'$ is an $\epsilon$-optimal solution to Problem~\ref{prob:one}, then $\Ss(\bK')$ is an $\epsilon$-optimal solution to the weighted set cover problem, i.e., $P(\bK')\leqslant \epsilon\, P(\bK^\*)$ implies $w(\Ss(\bK'))\leqslant \epsilon\, w(\Ss^\*)$. Moreover, Problem~\ref{prob:one} is inapproximable to a multiplicative factor of ${\rm log\,}n$, where $n$ denotes the number of state nodes.
\end{theorem}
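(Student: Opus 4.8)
The plan is to prove Theorem~\ref{theorem:two} in two parts, mirroring its two assertions. The first part establishes the $\epsilon$-optimality transfer, and the second leverages it together with the known inapproximability of weighted set cover to conclude the log-factor hardness for Problem~\ref{prob:one}.

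First I would establish the cost-preservation relationships that connect the two problems under the reduction of Algorithm~\ref{algo:one}. The key facts are that for any feasible $\bK \in \K$ we have $P(\bK) = w(\Ss(\bK))$, since the only nonzero-cost edges are those of the form $(y_k, u_{r+1})$ carrying cost $w(\Ss_k)$ and the remaining selected edges $(y_k,u_k)$ are zero-cost, and conversely that any set cover $\Ss'$ lifts to a feasible $\bK' \in \K$ with $P(\bK') = w(\Ss')$ exactly as constructed in the proof of Theorem~\ref{theorem:one}. By Lemma~\ref{lem:one}, $\Ss(\bK')$ is a valid cover, so $w(\Ss(\bK')) = P(\bK')$. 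Now suppose $\bK'$ is an $\epsilon$-optimal solution to Problem~\ref{prob:one}, i.e.\ $P(\bK') \leqslant \epsilon\, P(\bK^\*)$. Since Theorem~\ref{theorem:one} shows $\Ss(\bK^\*)$ is an optimal set cover, we have $P(\bK^\*) = w(\Ss(\bK^\*)) = w(\Ss^\*)$. Chaining these equalities gives $w(\Ss(\bK')) = P(\bK') \leqslant \epsilon\, P(\bK^\*) = \epsilon\, w(\Ss^\*)$, which is precisely the claimed $\epsilon$-optimality of $\Ss(\bK')$ for the weighted set cover problem. This first part is essentially bookkeeping over the reduction's cost structure.

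For the inapproximability claim, I would argue by contradiction using the standard result (Feige) that the weighted set cover problem cannot be approximated to within a factor of $(1-o(1))\ln N$ unless $\mathrm{P}=\mathrm{NP}$ (more precisely unless $\mathrm{NP}\subseteq\mathrm{DTIME}(n^{O(\log\log n)})$). Suppose, toward a contradiction, that there were a polynomial-time algorithm producing a $\bK'$ with $P(\bK') \leqslant c\,(\log n)\, P(\bK^\*)$ for some constant $c$. Applying the first part of the theorem, $\Ss(\bK')$ would be a $c\,(\log n)$-approximate solution to the weighted set cover instance. The crucial observation is the relationship between the system dimension $n = N + r + 1$ and the universe size $N$: since $\log n = \log(N+r+1)$ and $r$ is at most polynomial in $N$ (indeed $\log n = O(\log N)$), a $c\,(\log n)$-approximation translates into an $O(\log N)$-approximation for set cover, contradicting the inapproximability threshold. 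Because the reduction in Algorithm~\ref{algo:one} runs in polynomial time and $\Ss(\bK')$ is extracted in polynomial time (Step~\ref{step:S}), such an approximation algorithm for Problem~\ref{prob:one} would yield one for weighted set cover, giving the contradiction.

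The main obstacle I anticipate is making the dimension bookkeeping between $n$ and $N$ fully rigorous so that the log-factor truly transfers with the right constant. One must verify that the blow-up from $N$ to $n=N+r+1$ does not weaken the achievable hardness factor below the set-cover threshold; since $r \leqslant 2^N$ in the worst case this needs care, but for the hardness instances used in Feige's result $r$ is polynomially bounded in $N$, so $\log n = \Theta(\log N)$ and the threshold is preserved up to constants. A secondary subtlety is ensuring the infeasible ($\infty$-cost) edges and the forced zero-cost self-edges $(y_k,u_k)$ do not let an approximation algorithm exploit structure absent from the set-cover instance; this is handled by the feasibility argument already given in Theorem~\ref{theorem:one}, which shows every feasible $\bK$ corresponds to a genuine cover via $\Ss(\bK)$, so no spurious low-cost feasible solutions exist outside the image of the reduction.
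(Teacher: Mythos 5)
Your proposal is correct and follows essentially the same route as the paper: the $\epsilon$-optimality transfer is proved via the identities $w(\Ss(\bK'))=P(\bK')$ and $w(\Ss^\*)=w(\Ss(\bK^\*))=P(\bK^\*)$ (the latter from Theorem~\ref{theorem:one}), and the log-factor hardness is then imported from Feige's bound for weighted set cover. Your second part is in fact slightly more careful than the paper's one-line conclusion, since you explicitly track the relation between the system dimension $n=N+r+1$ and the universe size $N$ to confirm that the $\log$ threshold survives the reduction.
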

\begin{proof} Given $\bK'$ is an $\epsilon$-optimal solution to problem~1, i.e.,  $P(\bK') \leqslant \epsilon\, P(\bK^\*)$. From Steps~\ref{step:S} and~\ref{step:w} of Algorithm~1, we have $w(\Ss(\bK'))=P(\bK')$ and $w(\Ss(\bK^\*))=P(\bK^\*)$. Also, by Theorem~\ref{theorem:one}, $\Ss(\bK^\*)$ is an optimal solution to the weighted set cover problem. Therefore, $w(\Ss^\*)=w(\Ss(\bK^\*))= P(\bK^\*)$. Hence $w(\Ss(\bK')) \leqslant \epsilon\, w(\Ss^\*)$. Thus an $\epsilon$-optimal solution to Problem~\ref{prob:one} gives an $\epsilon$-optimal solution to the weighted set cover problem. The weighted set cover problem is inapproximable to a factor of $(1-o(1))\,\mbox{log\,}{N}$ \cite{Fei:98}, where $N$ denotes the cardinality of the universe. Thus Problem~\ref{prob:one} is inapproximable to a multiplicative factor of $\mbox{log\,}{n}$.
\end{proof}
In the following Sections (Section~\ref{sec:Reform} and \ref{sec:Approx}), we explore approximation algorithm to solve Problem~\ref{prob:one}. Later in Section~\ref{sec:spcases}, we consider Problem~\ref{prob:one} on two special graph topologies, which are of practical importance, and propose polynomial time algorithms to obtain a solution.
\section{Reformulating Optimal Feedback Selection Problem to Optimal Cycle Selection Problem}\label{sec:Reform}
In this section, we reformulate Problem~\ref{prob:one} to a graph theoretic equivalent. The following assumption holds.
\begin{assume}\label{assume:one}
The structured system $(\bA,\bB={\mathbb{I}_m},\bC={\mathbb{I}_p})$ satisfies the following condition: $\B(\bar{A})$ has a perfect matching.
\end{assume}
The motivation to make this assumption comes from the fact that there exists a wide class of systems called as {\em self-damped} systems that have a perfect matching in $\B(\bA)$, for example consensus dynamics in multi-agent systems and epidemic equations \cite{ChaMes:13}.
Self-damped systems are the systems with all diagonal entries of $\bA$ as nonzero. All systems with non-singular state matrix also satisfy Assumption~\ref{assume:one}. Consider a structured system $(\bA,\bB={\mathbb{I}_m},\bC={\mathbb{I}_p})$ that satisfies Assumption~\ref{assume:one} and a cost matrix $P$. Recall that under Assumption~1, condition~(b) in Proposition~\ref{prop:one} is satisfied without using any feedback edge. Hence, for solving Problem~\ref{prob:one} we need to satisfy only condition~(a) in Proposition~\ref{prop:one}.
\begin{algorithm}
\caption{Pseudo-code reducing Problem~\ref{prob:one} to a cycle formulation}\label{algo:two}
\textbf{Input}: Structured system $(\bA,\bB={\mathbb{I}_m},\bC={\mathbb{I}_p})$ and feedback cost matrix $P$\\
\textbf{Output}: Cycles $\C = \{\C_1,\dots,\C_t\}$ of digraph $\D_R$
\begin{algorithmic}[1]
\State Construct $\D(\bA)$ and find SCCs in $\D(\bA)$, say $\mN = \{\mN_{1},\dots, \mN_{\ell}\}$\label{step:digraph}
\State Condense each SCC into a single node, say node set $\mN = \{\mN_{1},\dots, \mN_{\ell}\}$\label{step:condense}
\State Define $E_{\mN}:= \{(\mN_a,\mN_b)$ : $x_i \in \mN_a$ , $x_j\in \mN_b$ and $(x_i,x_j) \in E_{X}\}$\label{step:En}
\State Define $E'_U:= \{(u_j,\mN_k)$ : $x_i \in \mN_k$ and $(u_j,x_i) \in E_U\}$\label{step:E'u}
\State Define $E'_Y:= \{(\mN_k,y_j)$ : $x_i \in \mN_k$ and $(x_i,y_j) \in E_Y\}$\label{step:E'y}
\State Construct $\D_F\leftarrow(\mN \cup V_U \cup V_Y, E_{\mN} \cup E'_U \cup E'_Y \cup E_K)$\label{step:Df}
\State $E_{ab} \leftarrow$ $\{(y_i,u_j)$ : $(u_j,\mN_a) \in E'_U$ and $(\mN_b,y_i) \in E'_Y\}$\label{step:E_ab}
\State $e_{ab} \leftarrow$ $\{(y_{i'},u_{j'})$ : $(i',j') \in \arg\min_{(y_i,u_j) \in E_{ab}} P_{ji}\}$\label{step:e_ab}
\State $E_{\rm min}\leftarrow\{ {e_{ab}:a,b \in \{1,\dots,\ell\}}\}$\label{step:E_min}
\State Construct $\D_R\leftarrow(\mN \cup V_U \cup V_Y, E_\mN \cup E'_U \cup E'_Y \cup E_{\rm min})$\label{step:Dr}
\State Find all the cycles in $\D_R$, $\C = \{\C_1,\dots,\C_t\}$\label{step:cycle}
\State Each cycle $\C_i\in \C$ has the following structure: $\C_i\leftarrow (\{N_i \subseteq \mN\}:[E_i \subseteq E_{\rm min}])$\label{step:cycle_struc}
\end{algorithmic}
\end{algorithm}
The approximation algorithm given in this paper is based on {\em cycle} formulation of Problem~\ref{prob:one}. Given $(\bA,\bar{B}=\mathbb{I}_m, \bar{C}=\mathbb{I}_p)$ and cost matrix $P$, the pseudo-code showing reformulation of Problem~\ref{prob:one} to a cycle based problem is presented in Algorithm~\ref{algo:two}. Algorithm~\ref{algo:two} constructs digraph $\D_F$ and the reduced digraph $\D_R$ as defined below and gives as output the cycles in digraph $\D_R$. The cycles in a directed graph can be found using the algorithm in \cite{Joh:75}.

Consider the directed graph $\D(\bA)$. We first find the set of all SCCs, $\mN = \{\mN_1,\dots,\mN_{\ell}\}$, in $\D(\bA)$ (Step~\ref{step:digraph}). Each SCC is now condensed to a node. With a slight abuse of notation, $\mN=\{\mN_1,\dots,\mN_{\ell}\}$ is used to denote the set of condensed nodes (Step~\ref{step:condense}). The construction of the digraph $\D_F=(\mN \cup V_U \cup V_Y, E_\mN\cup E'_U \cup E'_Y \cup E_K)$ is as follows. In $\D_F$,  an edge $(\mN_a,\mN_b) \in E_\mN$ if there exists an $x_i \in \mN_a$ and $x_j \in \mN_b$ and $\bA_{ji} = \star$ (Step~\ref{step:En}). Given the input edge set $E_U$, the edge set $E'_U$ is constructed in such a way that $(u_i,\mN_a) \in E'_U \Leftrightarrow x_j \in \mN_a$ and $(u_i,x_j) \in E_U$ (Step~\ref{step:E'u}). Similarly, the edge set $E'_Y$ is constructed such that $(\mN_a,y_i) \in E'_Y \Leftrightarrow x_j \in \mN_a$ and $(x_j,y_i) \in E_Y$ (Step~\ref{step:E'y}). Thus $E'_U$ consists of edges from an input to SCCs in $\mN$ and $E'_Y$ consists of edges from SCCs in $\mN$ to an output. Recall that $E_K$ is the set of all feedback edges for which $P_{ij}$ is finite. Thus, $E_K$ consists of all feasible feedback edges.
 
Next we construct the reduced edge set $E_{\rm min}$ and the directed graph $\D_R=(\mN \cup V_U \cup V_Y, E_\mN\cup E'_U \cup E'_Y \cup E_{\rm min})$ from $\D_F$. Corresponding to each SCC node in $\mN$, there are possibly multiple input and output nodes. Thus for an arbitrary node pair $\mN_a,\mN_b \in \mN$ there are numerous feedback edges possible between them. In such a situation, we only consider a least cost feedback edge between these nodes and ignore others. Corresponding to an arbitrary node pair $\mN_a, \mN_b \in \mN$, we define the set $E_{ab}$ as the set of all feasible feedback edges from $\mN_b$ to $\mN_a$ (Step~\ref{step:E_ab}). For all $\mN_a,\mN_b\in \mN$, if a feedback edge exists between $(\mN_b,\mN_a)$, select a minimum cost edge from edge set $E_{ab}$ and include it in edge set $E_{\rm min}$. This simplification results in a digraph $\D_R:=(\mN \cup V_U \cup V_Y, E_\mN \cup E'_U \cup E'_Y \cup E_{\rm min})$ (Step~\ref{step:Dr}).
Next, the directed cycle set $\C = \{\C_1,\dots,\C_t\} $ in $\D_R$ is obtained. A cycle consists of two sets: node set $N_i\subseteq \mN$ and feedback edge set $E_i \subseteq E_{\min}$. Also, the cost of an edge set $\hat{E} \subseteq E_{\min}$, denoted by $c(\hat{E})$ is the sum of the costs of the individual edges present in it, i.e., $c(\hat{E})=\sum_{e_i\in \hat{E}}c(e_i)$, where $c(e_i)$ denotes the cost of the feedback edge $e_i$ as defined by the feedback cost matrix $P$. Below we define Problem~\ref{prob:two}, which is an optimization problem on $\D_R$ and later show that this formulation indeed solves Problem~\ref{prob:one}. 

\begin{prob}[Optimal cycle selection problem]\label{prob:two}
Consider a structured system $(\bA, \bB=\mathbb{I}_m, \bC=\mathbb{I}_p)$ and feedback cost matrix $P$. Let $E_{\rm min}$ denotes the set of feedback edges constructed using Algorithm~\ref{algo:two}. Then, find $E_{opt} ~\in~ \arg\min\limits_{\ \hat{E} \subseteq E_{\rm min}} c(\hat{E}),$ such that each node, $\mN_i \in \mN$, lies in atleast one cycle in the digraph $\D_{opt} = (\mN\cup V_U\cup V_Y, E_{\mN}\cup E'_U\cup E'_Y\cup E_{opt})$.
\end{prob}
We show that Problem~\ref{prob:two} is equivalent to optimal feedback selection problem for dedicated i/o.
\begin{theorem}\label{theorem:three}
Consider a structured system $(\bA, \bB, \bC)$ and feedback cost matrix $P$. Let $\D_R$ be the digraph constructed using Algorithm~\ref{algo:two}. Then, $E'$ is a solution to Problem~\ref{prob:two} if and only if $\bK':=\{\bK'_{ij}= \star : (y_j,u_i)\in E'\}$ is a solution to Problem~\ref{prob:one}. Moreover, for $\epsilon \geqslant 1$, if $E'$ is an $\epsilon$-optimal solution to Problem~\ref{prob:two}, then $\bK'$ is an $\epsilon$-optimal solution to Problem~\ref{prob:one}, i.e., $c(E') \leqslant \epsilon\, c(E_{opt})$ implies $P(\bK') \leqslant \epsilon\,P(\bK^\*)$.
\end{theorem}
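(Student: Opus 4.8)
The plan is to establish the biconditional by relating feasible solutions of the two problems through the correspondence $\bK'_{ij}=\star \Leftrightarrow (y_j,u_i)\in E'$, and then to strengthen this to preserve approximation ratios. First I would argue the forward direction: suppose $E'$ solves Problem~\ref{prob:two}, so every condensed node $\mN_i\in\mN$ lies in at least one cycle of $\D_{opt}$. Unwinding the condensation (Steps~\ref{step:digraph}--\ref{step:condense} of Algorithm~\ref{algo:two}), a cycle through $\mN_i$ in $\D_R$ corresponds to a feedback edge that, together with paths inside and between the original SCCs of $\D(\bA)$, places every state node $x_\ell$ (which lives inside some $\mN_i$) in an SCC of the closed-loop digraph $\D(\bA,\bB,\bC,\bK')$ containing a feedback edge. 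This is exactly condition~(a) of Proposition~\ref{prop:one}. Since Assumption~\ref{assume:one} guarantees $\B(\bA)$ has a perfect matching, condition~(b) holds automatically (as noted after Proposition~\ref{prop:two}), so $\bK'\in\K$, i.e., $\bK'$ is feasible for Problem~\ref{prob:one}.

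Next I would argue the converse: if $\bK'$ solves Problem~\ref{prob:one}, then condition~(a) forces each $x_\ell$ into an SCC containing a feedback edge; projecting this SCC structure down to the condensed digraph shows each $\mN_i$ sits on a cycle using edges of $E'$, so $E'$ is feasible for Problem~\ref{prob:two}. The key technical point in both directions is that an SCC in the closed-loop digraph containing a feedback edge projects to (and lifts from) a directed cycle through the corresponding condensed nodes in $\D_R$; I would state this as the crux and verify it by tracking how inter-SCC edges $E_\mN$, input/output edges $E'_U,E'_Y$, and the chosen minimum-cost feedback edges $E_{\rm min}$ compose into closed walks.

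For the cost/approximation claim, the essential observation is the $E_{\rm min}$ reduction (Steps~\ref{step:E_ab}--\ref{step:E_min}): for each ordered pair $(\mN_a,\mN_b)$ we retain only a cheapest feasible feedback edge, so any feasible feedback set for Problem~\ref{prob:one} can be replaced edge-by-edge with edges in $E_{\rm min}$ of no greater cost while preserving the cycle-covering property. This makes the correspondence cost-preserving at optimality, giving $c(E_{opt})=P(\bK^\*)$, and more generally $c(E')=P(\bK')$ for the solution built from $E'$. Combining, $c(E')\leqslant\epsilon\,c(E_{opt})$ immediately yields $P(\bK')\leqslant\epsilon\,P(\bK^\*)$.

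The main obstacle I anticipate is rigorously justifying the SCC-to-cycle projection in both directions, especially ensuring that discarding non-minimal feedback edges in forming $E_{\rm min}$ never destroys feasibility --- that is, showing any solution using a more expensive feedback edge between two SCCs can be rerouted through the retained cheapest edge without breaking condition~(a). Handling the bookkeeping of inputs and outputs being dedicated (so each feedback edge $(y_j,u_i)$ connects specific states via $\mathbb{I}_m,\mathbb{I}_p$) is where care is needed, but the structure of Algorithm~\ref{algo:two} is designed precisely so this rerouting is always available.
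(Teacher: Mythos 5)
Your proposal follows essentially the same route as the paper: the forward direction lifts cycles through condensed nodes of $\D_R$ to SCCs containing feedback edges in $\D(\bA,\bB,\bC,\bK')$, the converse projects closed-loop SCCs down to cycles in the condensed DAG (using that the condensation is acyclic), and the cost identities $c(E')=P(\bK')$ and $c(E_{opt})=P(\bK^\*)$ transfer the $\epsilon$-optimality. The one obstacle you flag --- rerouting a feasible solution's non-minimal feedback edges through the retained cheapest edge of $E_{\rm min}$ without breaking the cycle-covering property --- is indeed the right thing to worry about, and is a step the paper's own if-part passes over silently.
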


\begin{proof}
\textbf{Only-if part}: We assume that $E'$ is a solution to Problem~\ref{prob:two} and then show that $\bK'$ is a solution to Problem~\ref{prob:one}. Since $E'$ is a solution to Problem~\ref{prob:two}, each $\mN_i\in \mN$ lies in a cycle with some feedback edge, say $(y_b,u_a)\in E'$. Consider an arbitrary node $x_j\in \mN_i$. Since $x_j$ lies in the SCC $\mN_i$ and $\mN_i$ lies in a cycle with some feedback edge $(y_b,u_a)$, $x_j$ lies in an SCC in $\D(\bA, \bB, \bC, \bK')$ with feedback edge $(y_b,u_a)$. Since $x_j$ is arbitrary, all nodes lie in an SCC with a feedback edge. Hence $\bK'$ is a solution to Problem~\ref{prob:one}. 

\noindent\textbf{If-part}: We assume that $\bK'$ is a solution to Problem~\ref{prob:one} and show that $E':=\{(y_j,u_i)$: $\bK'_{ij}=\star\}$ is a solution to Problem~\ref{prob:two}. Let $x_j\in \mN_i$ be an arbitrary state in SCC $\mN_i$ of $\D(\bA)$. Since $\bK'$ is a solution to Problem~\ref{prob:one}, $x_j$ lies in an SCC in $\D(\bA,\bB,\bC,\bK')$with some feedback edge, say $(y_b,u_a)$. Hence there exists a directed path $L$ in $\D(\bA,\bB,\bC,\bK')$ from $x_j$ to itself, with node repetitions allowed, which includes feedback edge $(y_b,u_a)$. Let the set of state nodes that lie in this path be denoted by $N_{\scriptscriptstyle L}$. Consider the digraph $\D'=(\mN\cup V_U\cup V_Y, E_{\mN}\cup E'_U\cup E'_Y)$. If $N_{\scriptscriptstyle L}\subseteq \mN_i$, then $\mN_i$ lies in a cycle with feedback edge $(y_b,u_a)$. If $N_{\scriptscriptstyle L}\nsubseteq \mN_i$, then since all the state nodes in $L$ lie in some SCC in $\D(\bA,\bB,\bC,\bK')$ there exists a path that originates at SCC $\mN_i$ and returns to $\mN_i$ including the feedback edge $(y_b,u_a)$. For this path, the SCC node repetitions are not allowed because $\D'$ is a DAG. Thus the directed path $L$ along with feedback edge $(y_b,u_a)$ forms a cycle in $\D'$ and hence $\mN_i$ lies in the cycle formed by path $L$ which includes the feedback edge $(y_b,u_a)$. This concludes the if-part of the proof.   

 Next, we show the $\epsilon$-optimality. Given $E'$ is a solution to Problem~\ref{prob:two} and  $c(E') \leqslant \epsilon\, c(E_{opt})$. By only-if part of Theorem~\ref{theorem:three}, $\bK':=\{\bK'_{ij}= \star : (y_j,u_i)\in E'\}$ is a feasible solution to Problem~\ref{prob:one}. Also, by definition of $\bK'$, $c(E')=P(\bK')$. Similarly, $\bK^{opt}:=\{\bK^{opt}_{ij}=\*$: $(y_j,u_i)\in E_{opt}\}$ is a feasible solution to Problem~\ref{prob:one} and $P(\bK^{opt})=c(E_{opt})$. Thus, $P(\bK') \leqslant \epsilon\,P(\bK^{opt})$.  Now we show that $\bK^{opt}$ is an optimal solution to Problem~\ref{prob:one}. Suppose not, i.e., $P(\bK^\*) <P(\bK^{opt})$. Then, by if-part of Theorem~\ref{theorem:three}, $E^\*:=\{(y_j,u_i) \in E^\*$: $\bK^\*_{ij}=\*\}$ is a feasible solution to Problem~\ref{prob:two}. Also, $c(E^\*) = P(\bK^\*)$. Thus $c(E^\*) < c(E_{opt})$. This contradicts the optimality of $E_{opt}$. Hence $P(\bK^{opt}) = P(\bK^\*)$. Now, since $P(\bK') \leqslant \epsilon\, P(\bK^{opt})$ and $P(\bK^{opt})=P(\bK^\*)$, $P(\bK') \leqslant \epsilon\,P(\bK^\*)$. This completes the proof.
\end{proof}
Theorem~\ref{theorem:three} thus concludes that an $\epsilon$-optimal solution to Problem~\ref{prob:two} gives an $\epsilon$-optimal solution to Problem~\ref{prob:one}. We elaborate our approach to solve Problem~\ref{prob:two} below.
\section{Approximation Algorithm for the Optimal Feedback Selection Problem}\label{sec:Approx}
This section discusses a {\em greedy algorithm} and later an {\em approximation algorithm} to find an approximate solution to Problem~\ref{prob:two}, which in turn gives an approximate solution to Problem~\ref{prob:one} (Theorem~\ref{theorem:three}).
Recall $\C$ as the set of cycles in $\D_R$.

\begin{defn}\label{defn:two}
Consider the set of cycles in $\D_R$, $\C=\{\C_1,\ldots,\C_t\}$. Given a set of cycles $\C' \subseteq \C$ in $\D_R$, the node set $N'$ \underline{covered} by $\C'$ is defined as $ N' := \cup_{\C_i \in \C'} N_i$, where $\C_i =  (\{N_i\} : [E_i])$. Here $N' \subseteq \mN$, where $\mN$ is the set of SCCs in $\D(\bA)$. In other words, we say $\C'$ \underline {covers} $N'$.
Further, the cost of the cover of cycle set $\C'$ is defined as $c(\cup_{\C_i \in \C'} E_i)$. Also, $\C'$ is said to be an optimal cycle cover if $N'=\mN$ and the cost of the cover $\C'$ is equal to $c(E_{opt})$, where $E_{opt}$ is an optimal solution to Problem~\ref{prob:two}.
\end{defn}
Our approach to solve Problem~\ref{prob:two} incorporates a greedy algorithm presented in Algorithm~\ref{algo:three} with a {\em potential function} presented in Algorithm~\ref{algo:four}. Algorithm~\ref{algo:three} is described below.   
\begin{algorithm}
\caption{Pseudo-code for subroutine \textsc{Greedy}$(\cdot,\cdot)$}\label{algo:three}
\textbf{Input}: Cycle set $\C_{\rm inp} \subseteq \C$ in $\D_R$, where $\C_i\in \C_{\rm inp} :=(\{N_i\} : [E_i])$, and an edge set $E_{\rm inp}\subseteq E_K$\\
\textbf{Output}: Set of feedback edges $H$
\begin{algorithmic}[1]
\State \textsc{Greedy}$(\cup_{\C_i \in \C_{\rm inp}} N_i,E_{\rm inp})$:\label{step:Greedy}
\State Initialize the set of covered nodes, $I\leftarrow emptyset$\label{step:I}
\State Initialize the set of selected edges, $H\leftarrow \emptyset$  
\State $N_{\rm inp}\leftarrow \cup_{\C_i \in \C_{\rm inp}} N_i$\label{step:Ninp}
\State $E_{i}\leftarrow E_i \setminus E_{\rm inp}$, for all $\C_i \in \C_{\rm inp}$\label{step:Einp}
\While{$I\neq N_{\rm inp}$}\label{step:While}
\State Calculate $\rho(\C_k) \leftarrow c(E_k)/|N_k|$, for all $\C_k \in \C_{\rm inp}$\label{step:p(C)}
\State Select $\C_j\in \arg\min_{\C_i\in \C_{\rm inp}} \rho(\C_i)$\label{step:C_j}
\State Update $I\leftarrow I\cup N_j$, $H\leftarrow H\cup E_j$\label{step:I_update}
\State $N_k\leftarrow N_k\setminus I$, $E_k\leftarrow E_k\setminus H$, for all $\C_k \in \C_{\rm inp}$\label{step:N_update}
 \For{$\C_k \in \C_{\rm inp}$}
   \If {$N_k=\{ \}$}  
\State $\C_{\rm inp}\leftarrow \C_{\rm inp}\setminus \C_k$\label{step:C_update}
   \EndIf
 \EndFor
\EndWhile
\Return $H$
\end{algorithmic}
\end{algorithm}
The pseudo-code to find a greedy solution to Problem~\ref{prob:two} is presented in Algorithm~\ref{algo:three}. Consider a structured system $(\bA,\bB,\bC)$ and feedback cost matrix $P$. Let $\D_R$ denotes the digraph corresponding to the structured system constructed using Algorithm~\ref{algo:two}. Given a set of cycles $\C_{\rm inp}$ and an edge set $E_{\rm inp}$ as input, Algorithm~\ref{algo:three} outputs a set of feedback edges $H$ such that $H \subseteq
 E_{\min}$, $H \cap E_{\rm inp} = \emptyset$ and all nodes $\mN_i\in N_{\rm inp}$, where $N_{\rm inp}=\cup_{\C_i \in \C_{\rm inp}} N_i$ (Step~\ref{step:Ninp}), lie in atleast one cycle in the digraph $({\mN}\cup V_U\cup V_Y,E_{\mN}\cup E'_U\cup E'_Y\cup H)$. At each step of the \textbf{while} loop (Step~\ref{step:While}), the sets $I$ and $H$ are defined as the set of nodes covered and the set of feedback edges selected, respectively (Steps~\ref{step:I} and~\ref{step:Ninp}). Our purpose is to make $I=N_{\rm inp}$. In other words, given a set of cycles $\C_{\rm inp}$ in $\D_R$, our aim is to choose a set of cycles $\C_{sol} \subseteq \C_{\rm inp}$ such that $\C_{sol}$ is a cover (Definition~\ref{defn:two}) of $N_{\rm inp}$. For each cycle $\C_i\in \C_{\rm inp}$, we define price of a cycle as the average cost per node, i.e., $\rho(\C_i)=c(E_i)/|N_i|$ (Step~\ref{step:p(C)}). A cycle which has a minimum price, say $\C_j$, is selected (Step~\ref{step:C_j}). We call this selection as a greedy selection of the cycle $\C_j$. If there are multiple cycles with minimum price, select any one of them. Based on this selection, the sets $I$ and $H$ are updated by including the nodes and the edges of $\C_j$, respectively (Step~\ref{step:I_update}). Further, all the covered nodes $(I)$ and all the selected edges $(H)$ are removed from the node set and the edge set of each cycle, respectively  (Step~\ref{step:N_update}). The set of cycles $\C_{\rm inp}$ is now updated by removing all the cycles with empty node set (Step~\ref{step:C_update}).  These set of operations are performed until we cover all the nodes in $N_{\rm inp}$, i.e., $I=N_{\rm inp}$. The cost of this greedy approach is denoted by $c(H)$, where $H$ is the set of feedback edges selected by the greedy algorithm satisfying $H \cap E_{\rm inp} = \emptyset$.
Let $\mC_{\rm arb}$ be an arbitrary set of cycles. Then, for each edge $e_i \in E_{\min}$, we define multiplicity $m_i(\mC_{\rm arb})$ as $m_i(\mC_{\rm arb}) = |\{\C_j:\C_j \in \mC_{\rm arb} {~\rm and ~} e_i \in E_j \}|$. In other words, $m_i(\mC_{\rm arb})$ is the number of cycles in $\mC_{\rm arb}$ in which the feedback edge $e_i$ is present. Now we define $k_1(\mC_{\rm arb}) := \max_{e_i\in E_{\min}} m_i(\mC_{\rm arb})$  and is referred as the {\em first highest multiplicity} of an edge in cycle set $\mC_{\rm arb}$. Also, for every cycle $\C_j \in \mC_{\rm arb}$, $k^j_1 (\mC_{\rm arb}) :=  \max_{e_i \in E_{\min}\setminus E_j}m_i(\mC_{\rm arb})$. Then, $k_2(\mC_{\rm arb}) := \min_{\C_j\in \mC_{\rm arb}} k^j_1$ and is referred as the {\em second highest multiplicity} of an edge in cycle set $\mC_{\rm arb}$. Next, let $\bfC$ denotes the set that consists of all possible optimal solutions to Problem~\ref{prob:two}. Note that, $\mC(j) \in \bfC$ is a set of cycles in $\D_R$. Then, we define $\tilde{k}_1 = \min_{\mC(j)\in \bfC} k_1(\mC(j))$ and a corresponding cycle set $\mC^1_{opt} \in \arg\min_{\mC(j)\in \bfC} k_1(\mC(j))$. Similarly, $\tilde{k}_2 = \min_{\mC(j)\in \bfC} k_2(\mC(j))$ and a corresponding cycle set $\mC^2_{opt} \in \arg\min_{\mC(j)\in \bfC} k_2(\mC(j))$. Further, $E^1_{opt}$ and $E^2_{opt}$ denote the set of feedback edges present in set of cycles $\mC^1_{opt}$ and $\mC^2_{opt}$, respectively. Note that, $\tilde{k}_1$ and $\tilde{k}_2$ may not necessarily be from the same cycle set in $\bfC$. Also, since $\mC^1_{opt} \in \bfC$ and $\mC^2_{opt} \in \bfC$,  $c(E^1_{opt}) = c(E^2_{opt}) = c(E_{opt})$.

We describe an example using Figure~\ref{fig:k1tilda} to demonstrate the values of variables $k_1$, $k_2$ for a cycle set and $\tilde{k}_1$, $\tilde{k}_2$ for the structured system illustrated.
Consider the following cycles: 
{\scalefont{0.9}{
\begin{eqnarray}\label{eqn:cycles}
\C_1&:&(\{\mN_1,\mN_2,\mN_3\}:[(y_3,u_2),(y_2,u_1)])\nonumber\\ 
\C_2&:&(\{\mN_1,\mN_2,\mN_4\}:[(y_4,u_2),(y_2,u_1)])\nonumber\\
\C_3&:&(\{\mN_1,\mN_2,\mN_5\}:[(y_1,u_5),(y_2,u_1)])\nonumber\\
\C_4&:&(\{\mN_5,\mN_6,\mN_8\}:[(y_5,u_6),(y_6,u_8)])\nonumber\\
\C_5&:&(\{\mN_5,\mN_6,\mN_7\}:[(y_5,u_6),(y_7,u_5)])\nonumber\\
\C_6&:&(\{\mN_3\}:[(y_3,u_3)])\nonumber\\
\C_7&:&(\{\mN_6\}:[(y_6,u_6)])\nonumber\\
\C_8&:&(\{\mN_7\}:[(y_7,u_7)])\nonumber\\
\C_9&:&(\{\mN_8\}:[(y_8,u_8)]).
\end{eqnarray}
}}
Let the feedback cost matrix $P$ associated with the structured system given in Figure~\ref{fig:k1tilda} be
{\scalefont{0.9}{
$$ P=\left[
\begin{smallmatrix}
10& 1  & 10 & 10 & 10 & 10 & 10 & 10 \\
10& 10 & 1 & 1  & 10 & 10 & 10 & 10 \\
10& 10 & 1  & 10 & 10 & 10 & 10 & 10 \\
10& 10 & 10 & 10 & 10 & 10 & 10 & 10 \\
1&  10 & 10 & 10 & 10 & 10 & 1 & 10 \\
10& 10 & 10 & 10 & 1  & 1  & 10 & 10\\
10& 10 & 10 & 10 & 10 & 10 & 1  & 10\\
10& 10 & 10 & 10 & 10 & 1  & 10 & 1  
\end{smallmatrix}
\right]
$$
}}
For the structured system given in Figure~\ref{fig:k1tilda}, the set of all possible optimal solutions to Problem~\ref{prob:two} $\bfC=\{\mC(1),\mC(2),\mC(3),\mC(4)\}$. Here, $\mC(1)=\{\C_1,\C_2,\C_3,\C_7,\C_8,\C_9\}$, $\mC(2)=\{\C_2,\C_3,\C_4,\C_5,\C_6\}$, $\mC(3)=\{\C_1,\C_2,\C_3,\C_4,\C_5\}$ and  $\mC(4)=\{\C_2,\C_3,\C_6,\C_7,\C_8,\C_9\}$. In the cycle set $\mC(1)$, the feedback edge $(y_2,u_1)$ is present in 3 cycles, which is the first highest multiplicity of an edge in cycle set $\mC(1)$. The second highest multiplicity of an edge in $\mC(1)$ is 1 because all the other feedback edges are present in only one cycle. Hence $k_1(\mC(1))=3$ and $k_2(\mC(1))=1$. In $\mC(2)$, the feedback edges $(y_2,u_1)$ and $(y_5,u_6)$ are both present in 2 cycles each.  Therefore, the first highest multiplicity of an edge in $\mC(2)$ is 2. Also, the second highest multiplicity of an edge in $\mC(2)$ is also 2. In $\mC(3)$, the edge $(y_2,u_1)$ is present in 3 cycles, which is the first highest multiplicity of an edge in cycle set $\mC(3)$, and the edge $(y_5,u_6)$ is present in 2 cycles.  Therefore, the first highest multiplicity of an edge in $\mC(3)$ is 3 and the second highest multiplicity of an edge in $\mC(3)$ is 2. In $\mC(4)$, the feedback edges $(y_2,u_1)$ is present in 2 cycles and all the other feedback edges are present in one cycle.  Thus, the first highest multiplicity of an edge in $\mC(4)$ is 2 and the second highest multiplicity of an edge in $\mC(4)$ is 1. Therefore, $\tilde{k}_1 = \min\{k_1(\mC(1)),k_1(\mC(2)),k_1(\mC(3)),k_1(\mC(4))\}$ = $\min\{3,2,3,2\}=2$ and $\tilde{k}_2 =\min\{k_2(\mC(1)),k_2(\mC(2))$, $k_2(\mC(3)), k_2(\mC(4))\}$ = $\min\{1,2,2,1\}=1$. Note that, only the feedback edges which lie in the cycles present in the sets $\mC(1),\mC(2),\mC(3)$ and $\mC(4)$ are shown in Figure~\ref{fig:k1tilda}.
\begin{figure}[t]
\begin{minipage}{.25\textwidth}
\begin{tikzpicture}[scale=0.42, ->,>=stealth',shorten >=1pt,auto,node distance=1.65cm, main node/.style={circle,draw,font=\scriptsize\bfseries}]
\definecolor{myblue}{RGB}{80,80,160}
\definecolor{almond}{rgb}{0.94, 0.87, 0.8}
\definecolor{bubblegum}{rgb}{0.99, 0.76, 0.8}
\definecolor{columbiablue}{rgb}{0.61, 0.87, 1.0}


	\fill[almond] (0,0) circle (12.0 pt);
    \node at (0,0) {\scriptsize $\mN_1$};
	\fill[almond] (0,-6) circle (12.0 pt);  
    \node at (0,-6) {\scriptsize $\mN_2$};
    \fill[almond] (1.5,-3) circle (12.0 pt);  
    \node at (1.5,-3) {\scriptsize $\mN_5$};
    \fill[almond] (-3,-3) circle (12.0 pt);  
    \node at (-3,-3) {\scriptsize $\mN_3$};
    \fill[almond] (-1,-3) circle (12.0 pt);  
    \node at (-1,-3) {\scriptsize $\mN_4$};
    \fill[almond] (5,-3) circle (12.0 pt);  
    \node at (5,-3) {\scriptsize $\mN_6$};
    \fill[almond] (3.5,0) circle (12.0 pt);  
    \node at (3.5,0) {\scriptsize $\mN_7$};
    \fill[almond] (3.5,-6) circle (12.0 pt);  
    \node at (3.5,-6) {\scriptsize $\mN_8$};
    
    \fill[bubblegum] (0,-1.5) circle (9.0 pt);  
    \node at (0,-1.5) {\scriptsize $u_1$};
	\fill[columbiablue] (1.5,0) circle (9.0 pt);  
    \node at (1.5,0) {\scriptsize $y_1$};
    \fill[bubblegum] (-1.3,-6) circle (9.0 pt);  
    \node at (-1.3,-6) {\scriptsize $u_2$};
	\fill[columbiablue] (0,-4.5) circle (9.0 pt);  
    \node at (0,-4.5) {\scriptsize $y_2$};
   \fill[bubblegum] (-3,-1.5) circle (9.0 pt);  
    \node at (-3,-1.5) {\scriptsize $u_3$};
	\fill[columbiablue] (-3,-4.5) circle (9.0 pt);  
    \node at (-3,-4.5) {\scriptsize $y_3$};
	\fill[columbiablue] (0-1,-4.3) circle (9.0 pt);  
    \node at (-1,-4.3) {\scriptsize $y_4$}; 
    \fill[bubblegum] (1.5,-1.7) circle (9.0 pt);  
    \node at (1.5,-1.7) {\scriptsize $u_5$};
	\fill[columbiablue] (2.7,-3) circle (9.0 pt);  
    \node at (2.7,-3) {\scriptsize $y_5$};
	\fill[bubblegum] (3.8,-3) circle (9.0 pt);  
    \node at (3.8,-3) {\scriptsize $u_6$};
	\fill[columbiablue] (5,-4.3) circle (9.0 pt);  
    \node at (5,-4.3) {\scriptsize $y_6$};
    \fill[bubblegum] (2.2,0) circle (9.0 pt);  
    \node at (2.2,0) {\scriptsize $u_7$};
	\fill[columbiablue] (3.5,-1.5) circle (9.0 pt);  
    \node at (3.5,-1.5) {\scriptsize $y_7$};
    \fill[columbiablue] (2,-6) circle (9.0 pt);  
    \node at (2,-6) {\scriptsize $y_8$};
	\fill[bubblegum] (5,-6) circle (9.0 pt);  
    \node at (5,-6) {\scriptsize $u_8$};
    
    \draw (-0.1,-0.3)->(-3,-2.7);
    \draw (-0.1,-0.3)->(-0.8,-2.8);
    \draw (1.5,-3.3)->(0,-5.7);
    \draw (3.5,-5.6)->(1.5,-3.3);
    \draw (5,-2.6)->(3.5,-0.3);
    
    \draw(0,-1.2)->(0,-0.3);
    \draw(0.4,0)->(1.2,0);
    
    \draw(-1.1,-6)->(-0.3,-6);
    \draw(0,-5.6)->(0,-4.7);

	\draw(-3,-1.8)->(-3,-2.7);
	\draw (-3,-3.3)->(-3,-4.2);
	
	\draw (-1,-3.3)->(-1,-4);      

	\draw (1.5,-2)->(1.5,-2.7);
	\draw (1.95,-3)->(2.5,-3);
	
	\draw (4.1,-3)->(4.7,-3);
	\draw (5,-3.3)->(5,-4);
	
	\draw (2.5,0)->(3.2,0);
	\draw (3.5,-0.4)->(3.5,-1.3);
	
	\draw (3.1,-6)->(2.3,-6);
	\draw (4.7,-6)->(3.8,-6);
	
	\draw[red] (0,-4.2)->(0,-1.8);
	\draw[red] (1.5,-0.3)->(1.5,-1.5);
	\draw[red] (-3,-4.8)->(-1.6,-6);
	\draw[red] (-1,-4.6)->(-1.3,-5.8);
	\draw[red] (3.2,-1.5)->(2.2,-0.3);
	\draw[red] (3.2,-1.5)->(1.8,-1.7);
	\draw[red] (3,-3)->(3.5,-3);
	\draw[red] (4.7,-4.3)->(3.8,-3.3);
	\draw[red] (5,-4.6)->(5,-5.7);

;
\path[every node/.style={font=\sffamily\small}]
(-3.3,-4.3)  edge[red, bend left = 92] node [left] {} (-3.2,-1.5)
(2,-6.3) edge[red, bend right = 50] node [left] {} (5,-6.3);
\end{tikzpicture}
\end{minipage}~\hspace*{2 mm}
\begin{minipage}{.22\textwidth}
\caption{\small {\scalefont{0.9}{Illustrative figure demonstrating the variables $\tilde{k}_1$ and $\tilde{k}_2$. $\bfC=\{\mC(1),\mC(2),\mC(3)$ and $\mC(4)\}$. Then, $k_1(\mC(1))=3$, $k_1(\mC(2))=2$,  $k_1(\mC(3))=3$, $k_1(\mC(4))=2$. Similarly $k_2(\mC(1))=1$, $k_2(\mC(2))=2$, $k_2(\mC(3))=2$, $k_2(\mC(4))=1$. Thus, $\tilde{k}_1=2$ and $\tilde{k}_2=1$.}}} 
\label{fig:k1tilda}
\end{minipage}
\vspace*{-7 mm}
\end{figure}
\begin{lem}\label{lem:two} 
Consider a structured system $(\bA, \bB, \bC)$ and cost matrix $P$. Let $\mC^1_{opt}$ be an optimal cycle cover and $H\subseteq E_K$ be the output of Algorithm~\ref{algo:three}, which takes as input a set of cycles and a set of feedback edges. Then, $c(H) \leqslant \tilde{k}_1\, (1 + \log\,|\mN|)\, c(E_{opt})$, where $\tilde{k}_1$ is the highest multiplicity of an edge in the cycle set $\mC^1_{opt}$ and $E_{opt}$ is an optimal solution to Problem~\ref{prob:two}.
\end{lem}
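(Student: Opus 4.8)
The plan is to read Algorithm~\ref{algo:three} as a weighted greedy set cover over the universe $\mN$, in which each cycle $\C_i$ acts as a ``set'' covering the node block $N_i$ at cost $c(E_i)$, and then to run the classical charging argument while accounting for the fact that cycles \emph{share} feedback edges. First I would record the greedy cost as a sum of per-node charges: each time the while loop selects a cycle $\C_j$ of current price $\rho(\C_j)=c(E_j)/|N_j|$ (with $E_j,N_j$ the reduced edge and node sets at that iteration), I charge $\rho(\C_j)$ to each of the $|N_j|$ newly covered nodes. Since Step~\ref{step:N_update} strips already-selected edges from every cycle, the selected edge sets across iterations are disjoint and their union is $H$; hence $c(H)=\sum_{v\in\mN}\mathrm{price}(v)$.

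The key step, and the one that makes the multiplicity $\tilde k_1$ enter, is to relate the optimal cover's \emph{summed} cycle costs to its \emph{true} edge cost. Since $E^1_{opt}=\cup_{\C_i\in\mC^1_{opt}}E_i$, expanding $\sum_{\C_i\in\mC^1_{opt}}c(E_i)$ counts each edge $e_i$ exactly $m_i(\mC^1_{opt})$ times, so
\[
\sum_{\C_i\in\mC^1_{opt}}c(E_i)=\sum_{e_i\in E_{\rm min}}m_i(\mC^1_{opt})\,c(e_i)\ \leqslant\ k_1(\mC^1_{opt})\!\!\sum_{e_i\in E^1_{opt}}\!\!c(e_i)=\tilde k_1\,c(E^1_{opt})=\tilde k_1\,c(E_{opt}),
\]
using $k_1(\mC^1_{opt})=\tilde k_1$ and $c(E^1_{opt})=c(E_{opt})$. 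Writing $W:=\sum_{\C_i\in\mC^1_{opt}}c(E_i)$, this yields $W\leqslant\tilde k_1\,c(E_{opt})$, and this conversion is exactly the main obstacle: unlike ordinary set cover, the genuine optimum $c(E_{opt})=c(\cup E_i)$ lies strictly below $\sum_i c(E_i)$, so a bound phrased in summed cycle costs must be deflated by the worst-case edge reuse $\tilde k_1$.

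Next I would bound the greedy price at a generic iteration with $r$ uncovered nodes. Because $\mC^1_{opt}$ covers all of $\mN$ (and a cycle is discarded in Step~\ref{step:C_update} only once its node set empties), the cycles of $\mC^1_{opt}$ with nonempty remaining node sets are still available and their reduced node sets jointly contain every uncovered node; hence $\sum_{\C_i\in\mC^1_{opt}}|N_i\setminus I|\geqslant r$, while $\sum_{\C_i\in\mC^1_{opt}}c(E_i\setminus H)\leqslant W$. As the greedy selects a globally minimum-price cycle, its price is at most the price of any individual cycle of $\mC^1_{opt}$, hence at most the weighted average $W/r$; so every node covered when $r$ nodes remain is charged at most $W/r$. (This uses that the input cycle set contains $\mC^1_{opt}$, as holds when Algorithm~\ref{algo:three} is invoked on the full cycle family.)

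Finally I would collapse the charges with the standard harmonic estimate. Ordering the nodes $v_1,\dots,v_n$ ($n=|\mN|$) by the iteration in which they are covered, node $v_\ell$ is covered while at least $n-\ell+1$ nodes remain, so $\mathrm{price}(v_\ell)\leqslant W/(n-\ell+1)$. Summing gives $c(H)\leqslant W\sum_{k=1}^{n}1/k\leqslant W\,(1+\log n)\leqslant \tilde k_1\,(1+\log|\mN|)\,c(E_{opt})$, which is the claimed bound. Thus the only nonstandard ingredient is the multiplicity inequality of the second paragraph; the remainder is the usual greedy set-cover analysis transported to the cycle setting.
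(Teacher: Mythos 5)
Your proposal is correct and follows essentially the same route as the paper's proof: the multiplicity inequality $\sum_{\C_i\in\mC^1_{opt}}c(E_i)\leqslant \tilde k_1\,c(E_{opt})$, the weighted-average (mediant) bound on the minimum price over the not-yet-exhausted cycles of the optimal cover, and the harmonic-sum collapse. The only cosmetic difference is that you charge prices per node ordered by coverage time, whereas the paper groups the charges by iteration; both yield the same $(1+\log|\mN|)$ factor.
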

\begin{proof}
Given $\mC^1_{opt}$ is an optimal solution to Problem~\ref{prob:two}. We define the total cost of cycles $c_{tot}$ as
\begin{eqnarray}
c_{tot} & =& \sum_{\C_i\in \mC^1_{opt}} c(E_i).
\label{eq:ctot1}
\end{eqnarray}
Since $\tilde{k}_1$ is the highest multiplicity edge, in the edge set $E^1_{opt}:=\cup_{\C_i \in \mC^1_{opt}} E_i$, corresponding to $\mC^1_{opt}$, from \eqref{eq:ctot1} 
\begin{equation}\label{eq:ctot2}
c_{tot}  \leqslant  \sum_{\tilde{e}_i\in E^1_{opt}}(\tilde{k}_1\, c(\tilde{e}_i)) = \tilde{k}_1(\sum_{\tilde{e}_i\in E^1_{opt}} c(\tilde{e}_i)) = \tilde{k}_1\times c(E^1_{opt}).
\end{equation}
Let in $v^{\rm th}$ iteration of the \textbf{while} loop (Steps~\ref{step:p(C)}-\ref{step:C_update}), $\widetilde{\C}_{ns}(v)=\{\widetilde{\C}^1_{ns}(v),\dots,\widetilde{\C}^z_{ns}(v)\}\subseteq \mC^1_{opt}$, where $\widetilde{\C}^i_{ns}(v) =(\{\widetilde{N}^i_{ns}(v)\}:[\widetilde{E}^i_{ns}(v)])$, be the set of cycles not yet selected by the greedy scheme described in Algorithm~3. Since $\widetilde{\C}_{ns}(v)\subseteq \mC^1_{opt}$,
\begin{eqnarray}
c_{tot} & \geqslant & \sum_{\widetilde{\C}^i_{ns}(v)\in \widetilde{\C}_{ns}(v)} c(\widetilde{E}^i_{ns}(v)).\label{eq:ctot3}
\end{eqnarray}
From \eqref{eq:ctot2} and~\eqref{eq:ctot3}, we get
\begin{eqnarray}
\tilde{k}_1\times c(E^1_{opt})\hspace{-2 mm}&\geqslant &\hspace*{-2 mm}\sum_{{\widetilde{\C}^i}_{ns}(v)\in \widetilde{\C}_{ns}(v)}c(\widetilde{E}^i_{ns}(v)), \nonumber\\
 & = &\hspace*{-2 mm} c(\widetilde{E}^1_{ns}(v)) + \dots + c(\widetilde{E}^z_{ns}(v)), \nonumber\\
& = &\hspace*{-2 mm}  {\scalebox{1.0}{\mbox{$|\widetilde{N}^1_{ns}(v)|\, \frac{c(\widetilde{E}^1_{ns}(v))}{|\widetilde{N}^1_{ns}(v)|} + \dots + |\widetilde{N}^z_{ns}(v)|\, \frac{c(\widetilde{E}^z_{ns}(v))}{|\widetilde{N}^z_{ns}(v)|}$}}}\nonumber
\end{eqnarray}
The ratio of the cost of each cycle $\C_i$ to the number of nodes it will cover is denoted by $\rho({\C_i})$ (Step~\ref{step:p(C)} of Algorithm~\ref{algo:three}), i.e., $c({E}_i)/|{N}_i| = \rho({\C_i})$. Let the cycle $\C_j$ with minimum price is selected greedily in the current iteration. Then, $\rho(\C_j)\leqslant \rho(\widetilde{\C}^i_{ns}(v))$, for $i = 1,\dots,z$. So,
\begin{eqnarray}
\tilde{k}_1\times c(E^1_{opt})  & \geqslant & \sum_{\widetilde{\C}^i_{ns}(v)\in \widetilde{\C}_{ns}(v)} \rho(\C_j)\times |\widetilde{N}^i_{ns}(v)|,\nonumber\\
  & = & \rho(\C_j)\times (\sum_{\widetilde{\C}^i_{ns}(v)\in \widetilde{\C}_{ns}(v)}|\widetilde{N}^i_{ns}(v)|),\nonumber\\
  & \geqslant & \rho(\C_j)\times (|\cup_{\widetilde{\C}^i_{ns}(v)\in \widetilde{\C}_{ns}(v)}\widetilde{N}^i_{ns}(v)|).\nonumber
\end{eqnarray}
Notice that $\widetilde{\C}_{ns}(v)$ covers nodes $\mN \setminus I$, where $I$ is the set of nodes in $\mN$ covered till the $v^{\rm th}$ iteration of the \textbf{while} loop. Let $\mN \setminus I = N_{ns}(v)$. Thus $|N_{ns}(v)| = |\cup_{\widetilde{\C}^i_{ns}(v)\in \widetilde{\C}_{ns}(v)}\widetilde{N}^i_{ns}(v)|$.  
\begin{eqnarray}
\tilde{k}_1\times c(E^1_{opt}) & \geqslant & \rho(\C_j) \times |N_{ns}(v)| ,\nonumber\\
\rho(\C_j) & \leqslant & \tilde{k}_1\times \frac{c(E^1_{opt})}{|N_{ns}(v)|}.\label{eq:cycle_cost}
\end{eqnarray}
Let the sequence of cycles selected by Algorithm~\ref{algo:three} be $\hat{\C}=\{\hat{\C}_1,\dots,\hat{\C}_d\}$. In $v^{\rm th}$ iteration, let the number of nodes covered by cycle $\hat{\C}_v$ be given by $\hat{n}_v$. Here ${|N_{ns}(v)|}$ is the number of nodes yet to be covered after $(v-1)$ iterations. Thus $N_{ns}(1)=\mN$. Also, by \eqref{eq:cycle_cost}, $\rho(\hat{\C}_v)\leqslant \tilde{k}_1\, \frac{c(E^1_{opt})}{|N_{ns}(v)|}$. The cost incurred when selecting cycle $\hat{\C}_v$ is $\rho(\hat{\C}_v)\times\,\hat{n}_v$. So, the total cost incurred
\begin{eqnarray*}
c(H)&=& \sum_{\hat{\C}_v\in \hat{\C}} {\scalefont{0.9}{\rho(\hat{\C}_v)\times \hat{n}_v,}} \\
&{\scalefont{0.9}{\leqslant}}& {\scalefont{0.9}{\tilde{k}_1 \, c(E^1_{opt}) \Big( \frac{\hat{n}_1}{|N_{ns}(1)|} + \ldots + \frac{\hat{n}_d}{|N_{ns}(d)|}\Big),}} \nonumber \\
&=& \tilde{k}_1 \, c(E^1_{opt}) \Big(\frac{\hat{n}_1}{|\mN|} + \ldots + \frac{\hat{n}_d}{|N_{ns}(d)|}\Big),\\
\small &=& \tilde{k}_1 \, c(E^1_{opt})  \Big( {\scalebox{0.7}{\mbox{$ \underbrace{\scriptsize{\frac{1}{|\mN|}}+\dots+\frac{1}{|\mN|}}_{\hat{n}_1 ~{\rm times}}+ \underbrace{\frac{1}{|\mN|- \hat{n}_1}+\dots+ \frac{1}{|\mN|-\hat{n}_1}}_{{\scalefont{1.2}{\hat{n}_2~{\rm times}}}}$}}} \\ 
&& {\scalebox{0.7}{\mbox{$+ \ldots+ \underbrace{\frac{1}{|\mN|-\sum\limits_{i=1}^{d-1}\hat{n}_i}+\ldots+\frac{1}{|\mN|-\sum\limits_{i=1}^{d-1}\hat{n}_i}}_{\hat{n}_d~{\rm times}}\Big)$}}},\\
&{\scalefont{0.9}{\leqslant }}&{\scalefont{0.9}{ \tilde{k}_1 \, c(E^1_{opt}) \, \Big(1 + \mbox{log}(|\mN|) \Big),}}\\
&{\scalefont{0.9}{= }}& {\scalefont{0.9}{\tilde{k}_1 \, c(E_{opt}) \Big(1 + \mbox{log}(|\mN|) \Big).}}
\end{eqnarray*}
Thus $c(H)\leqslant\ \tilde{k}_1 \, c(E_{opt}) (1+\log|\mN|)$.
\end{proof}

\begin{rem}
Let $\mC^1_{opt}$ be an optimal cycle set that solves Problem~\ref{prob:two} and the highest multiplicity of a feedback edge in $\mC^1_{opt}$ be $\tilde{k}_1$. Notice that $|\mC^1_{opt}| \leqslant |\mN|$ because in optimal solution each cycle covers atleast one different node. Hence, $\tilde{k}_1 \leq |\mN|$. 
\end{rem}
\begin{algorithm}
\caption{Pseudo-code to find an approximate solution to Problem~\ref{prob:two}}\label{algo:four}
\textbf{Input}: Cycle set $\C = \{\C_1,\dots,\C_t\}$, where $\C_i :=(\{N_i\} : [E_i])$\\
\textbf{Output}: Set of feedback edges $H_A$
\begin{algorithmic}[1]
\State {\scalefont{0.9}{Initialize the set of covered nodes as $I_A\leftarrow\emptyset$}}\label{step:IA}
\State {\scalefont{0.9}{Initialize the set of selected edges as $H_A\leftarrow\emptyset$}}\label{step:HA} 
\State {\scalefont{0.9}{Define $H_A(\C_i) \leftarrow\textsc{Greedy}\Big(\cup_{j=1}^t N_j\setminus N_i,E_i\Big) $}}\label{step:rungreedy}
\State {\scalefont{0.9}{Define $\textsc{pot}(\C_i)\leftarrow c(E_i) + c(H_A(\C_i))$}}\label{step:potdef}
\While {\scalefont{0.9}{ {$I\neq \mN$}}}
\State {\scalefont{0.9}{Calculate $\textsc{pot}(\C_k)$, for $k=1,\dots,|\C|$}}\label{step:potcal}
\State {\scalefont{0.9}{Select $\C_j\in \arg\min_{\C_i\in \C} \textsc{pot}(\C_i)$}}\label{step:potmin}
\State {\scalefont{0.9}{$I_A\leftarrow I_A\cup N_j$, $H_A\leftarrow H_A\cup E_j$}}\label{step:IAupadte}
\State {\scalefont{0.9}{$N_k\leftarrow N_k/I_A$, $E_k\leftarrow E_k/H_A$, for $k=1,\dots,|\C|$}}\label{step:NKupdate}
\EndWhile
\State {\scalefont{0.9}{Return $H_A$}}
\end{algorithmic}
\end{algorithm}

The pseudo-code for finding an approximate solution to Problem~\ref{prob:two} is presented in Algorithm~\ref{algo:four}. This algorithm incorporates the greedy algorithm given in Algorithm~\ref{algo:three} with a potential function. Here, $I_A$ and $H_A$ are defined as the set of nodes covered and the set of feedback edges selected, respectively. Our purpose is to make $I_A=\mN$. Consider a cycle $\C_i \in \C$. The potential of a cycle is defined in the following way. We apply the greedy scheme discussed in Algorithm~\ref{algo:three} with input $(\cup_{j=1}^t N_j/N_i,E_i)$ and let the solution obtained be the edge set $H_A(\C_i)$ (Step~\ref{step:rungreedy}). Notice that $H_A(\C_i)\cap E_i =\emptyset$ because we removed the edge set $E_i$ from all $E_j$'s before applying the greedy scheme (see Algorithm~\ref{algo:three}). The potential of cycle $\C_i$ is then defined as the sum of $c(E_i)$ and $c(H_A(\C_i))$ (Step~\ref{step:potdef}). Also, the edge set $E_i\cup H_A(\C_i)$ is a feasible solution to Problem~\ref{prob:two}, as $E_i$ covers $N_i$ and $H_A(\C_i)$ covers $(\cup_{j=1}^t N_j/N_i)$. After calculating the potential for each $\C_i \in \C$, we select a cycle with minimum potential value, say $\C_j$ (Step~\ref{step:potmin}). The node set covered and the edge set selected till current iteration is updated as in Step~\ref{step:IAupadte}. Also, the edge set $E_j$ is removed from remaining edge sets for all $\C_k\in \C\setminus \C_j$ (Step~\ref{step:NKupdate}).
In Theorem~\ref{theorem:four}, we prove that Algorithm~\ref{algo:four} gives an approximate solution to Problem~\ref{prob:two} with approximation ratio $\tilde{k}_2 (1+\log|\mN|)$.
\begin{theorem}\label{theorem:four}
Algorithm~\ref{algo:four} which takes as input a cycle set $\C=\{\C_1,\dots,\C_t\}$  outputs a solution $H_A$ to Problem~\ref{prob:two} such that $c(H_A)\leqslant\tilde{k}_2\,(1 + {\rm log}|{\mN}|)\, c(E_{opt})$, where $E_{opt}$ is an optimal solution to Problem~\ref{prob:two}. In other words, output of Algorithm~\ref{algo:four} is a $\tilde{k}_2\,(1 + {\rm log}|{\mN}|)$-optimal solution to Problem~\ref{prob:two}. 
\end{theorem}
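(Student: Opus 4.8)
The plan is to bound the cost of Algorithm~\ref{algo:four}'s output by the potential, evaluated at the first iteration, of one carefully chosen cycle drawn from an optimal cover, and then to show that this single potential value already lies within the target factor $\tilde{k}_2(1+\log|\mN|)$ of $c(E_{opt})$. Concretely, I would fix the optimal cover $\mC^2_{opt}$ that attains $\tilde{k}_2$, and let $\C^\* \in \mC^2_{opt}$ be the cycle realizing $k_2(\mC^2_{opt}) = k^{\C^\*}_1(\mC^2_{opt})$, i.e.\ the cycle whose removal drops the maximum multiplicity among the edges of $E_{\min}\setminus E_{\C^\*}$ down to $\tilde{k}_2$.

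The heart of the argument is to bound $\textsc{pot}(\C^\*) = c(E_{\C^\*}) + c(H_A(\C^\*))$ at the first iteration. Here $H_A(\C^\*) = \textsc{Greedy}(\cup_j N_j \setminus N_{\C^\*},\, E_{\C^\*})$ is a greedy cover of $\mN \setminus N_{\C^\*}$ in which the edges $E_{\C^\*}$ have been deleted, hence made free. The set $\mC^2_{opt} \setminus \{\C^\*\}$ covers exactly $\mN \setminus N_{\C^\*}$, and — this is the crucial point — after deleting $E_{\C^\*}$ every surviving edge of this cover lies in at most $\tilde{k}_2$ of its cycles, by the defining property of $\C^\*$. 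Re-running the counting argument of Lemma~\ref{lem:two} on this residual instance, with $\mC^2_{opt}\setminus\{\C^\*\}$ playing the role of the optimal cover and $\tilde{k}_2$ replacing $\tilde{k}_1$, yields $c(H_A(\C^\*)) \leqslant \tilde{k}_2\,(1 + \log|\mN|)\,c_2$, where $c_2 = c\big(\cup_{\C_i\in\mC^2_{opt}\setminus\{\C^\*\}} E_i \setminus E_{\C^\*}\big)$. Since edge costs are additive and $E_{\C^\*}$ is disjoint from $\cup_{\C_i\neq \C^\*} E_i \setminus E_{\C^\*}$, we have $c(E_{\C^\*}) + c_2 = c(E^2_{opt}) = c(E_{opt})$; and because $\tilde{k}_2(1+\log|\mN|) \geqslant 1$, I can absorb the leading $c(E_{\C^\*})$ into the factor to conclude $\textsc{pot}(\C^\*) \leqslant \tilde{k}_2(1+\log|\mN|)\,c(E_{opt})$.

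It remains to relate the whole run to this one potential. Since Algorithm~\ref{algo:four} selects at its first iteration the cycle $\C_{j_1}$ of minimum potential, $\textsc{pot}(\C_{j_1}) \leqslant \textsc{pot}(\C^\*)$. I would then show $c(H_A) \leqslant \textsc{pot}(\C_{j_1})$ by a telescoping induction over the \textbf{while} loop. Writing $\Phi_v$ for the minimum potential among the reduced cycles at the start of iteration $v$ and $\Delta_v$ for the new edges committed in iteration $v$, I would establish the one-step inequality $c(\Delta_v) + \Phi_{v+1} \leqslant \Phi_v$. This rests on the determinism (memorylessness) of the \textsc{Greedy} subroutine: after committing to $\C_{j_v}$ and performing the node/edge reductions of Step~\ref{step:NKupdate}, the greedy completion computed inside $\textsc{pot}(\C_{j_v})$ decomposes as its own first greedy pick $\hat{\C}_1$ together with the greedy completion of the remainder, and that remainder is exactly the quantity evaluated by $\textsc{pot}(\hat{\C}_1)$ at iteration $v+1$; hence $\Phi_{v+1} \leqslant \textsc{pot}(\hat{\C}_1) = c(H_A(\C_{j_v})) = \Phi_v - c(\Delta_v)$. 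Summing over all iterations and using $\Phi_{\mathrm{final}} \geqslant 0$ telescopes to $c(H_A) \leqslant \Phi_1 = \textsc{pot}(\C_{j_1})$. Chaining the two parts gives $c(H_A) \leqslant \textsc{pot}(\C_{j_1}) \leqslant \textsc{pot}(\C^\*) \leqslant \tilde{k}_2(1+\log|\mN|)\,c(E_{opt})$.

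I expect the main obstacle to be the second paragraph: correctly formalizing the residual instance and proving that committing to $\C^\*$ \emph{absorbs} the first-highest-multiplicity edge, so that the greedy remainder pays only the second-highest multiplicity $\tilde{k}_2$. This is precisely the gain that the potential function buys over the plain greedy bound of Lemma~\ref{lem:two}, and making the multiplicity bookkeeping rigorous (that no surviving edge of $\mC^2_{opt}\setminus\{\C^\*\}$ exceeds multiplicity $\tilde{k}_2$ once $E_{\C^\*}$ is deleted) is the delicate point. The telescoping in the third paragraph is more routine, but care is needed to track that the reductions in Step~\ref{step:NKupdate} keep the reduced potentials mutually consistent across iterations so that the greedy-consistency identity $\Phi_{v+1} \leqslant \textsc{pot}(\hat{\C}_1) = c(H_A(\C_{j_v}))$ holds exactly.
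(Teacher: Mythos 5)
Your proposal is correct and its core is the same as the paper's: fix the optimal cover $\mC^2_{opt}$ attaining $\tilde{k}_2$, single out one cycle of it whose removal leaves every remaining edge of $E_{\min}$ with multiplicity at most $\tilde{k}_2$, bound that cycle's first-iteration potential by applying the counting argument of Lemma~\ref{lem:two} to the residual instance (with $\tilde{k}_2$ in place of $\tilde{k}_1$ and optimal residual cost $c(E^2_{opt})-c(\widetilde{E}_1)$), and absorb the additive $c(\widetilde{E}_1)$ using $\tilde{k}_2(1+\log|\mN|)\geqslant 1$. Your choice of the distinguished cycle as the minimizer of $k^j_1(\mC^2_{opt})$ is in fact slightly cleaner than the paper's phrasing (a cycle containing the highest-multiplicity edge), but the two coincide in substance. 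Where you go beyond the paper is the last step: the paper only argues that the first iteration's minimum potential already certifies a feasible solution within the bound, and relegates to a remark the claim that the algorithm's actual terminal output costs no more; your telescoping inequality $c(\Delta_v)+\Phi_{v+1}\leqslant\Phi_v$, resting on the memorylessness of the \textsc{Greedy} subroutine, supplies the missing bookkeeping that $c(H_A)\leqslant\textsc{pot}(\C_{j_1})$ for the full run. That addition makes the argument tighter than the published one without changing its route; the only care needed is the tie-breaking/determinism caveat you already flag.
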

\begin{proof}
Let $\mC^2_{opt}$ be an optimal solution of Problem~\ref{prob:two}. Recall the definition of $\tilde{k}_2$. Let the highest multiplicity of a feedback edge in $\mC^2_{opt}$ be $k'$ and the corresponding edge be ${e'}$. Consider the cycle $\widetilde{\C}_1 \in \mC^2_{opt}$, where $\widetilde{\C}_1 = (\{\widetilde{N}_1\}:[\widetilde{E}_1])$ such that ${e'}\in \widetilde{E}_1$. Let $H_A(\widetilde{\C}_1) := \textsc{greedy}(\cup_{j=1}^t N_j/\widetilde{N}_1,\widetilde{E}_1)$. The potential of cycle $\widetilde{\C}_1$ is given by
$\textsc{pot}(\widetilde{\C}_1) = c(\widetilde{E}_1) + c(H_A(\widetilde{\C}_1)).
$

Let $E^2_{opt}$ be the set of feedback edges corresponding to $\mC^2_{opt}$. Note that, an optimal edge set to cover the nodes $\mN{\setminus} \widetilde{N}_1$ is $E^2_{opt}{\setminus} \widetilde{E}_1$ and the optimal cost is $c(E^2_{opt}){-}c(\widetilde{E}_1)$. Also, since $\widetilde{\C}_1 \in \mC^2_{opt}$, where $\mC^2_{opt}$ is an optimal cycle cover, the highest multiplicity of an edge in $\mC^2_{opt}{\setminus} \widetilde{\C}_1$ is $\tilde{k}_2$. Hence by Lemma~\ref{lem:two}, we have
\begin{eqnarray*}
{\scalefont{0.9}{c(H_A(\widetilde{\C}_1))}} &{\scalefont{0.9}{ \leqslant}} & {\scalefont{0.9}{\tilde{k}_2 \, (1 + \mbox{log}{|\mN\setminus \widetilde{N}_1|}) \, (c(E^2_{opt})-c(\tilde{E}_1)),}}\\
 & {\scalefont{0.9}{\leqslant}} &{\scalefont{0.9}{ \tilde{k}_2\, (1 + \mbox{log}{|\mN|})\, (c(E^2_{opt})-c(\widetilde{E}_1)).}}
\end{eqnarray*}
Algorithm~\ref{algo:four} greedily selects a cycle, say $\C_k$, with minimum potential. Then, $\textsc{pot}(\C_k)\leqslant \textsc{pot}({\widetilde{\C}_1})$. Hence, 
\begin{eqnarray}\label{eqn:potent}
{\scalefont{0.9}{\textsc{pot}(\C_k)}} &{\scalefont{0.9}{ \leqslant}} & {\scalefont{0.9}{c(\widetilde{E}_1) + \tilde{k}_2\, (1 + \mbox{log}{|\mN|})\,\Big(c(E^2_{opt})-c(\widetilde{E}_1)\Big),}}\nonumber\\
& {\scalefont{0.9}{\leqslant}} &{\scalefont{0.9}{ \tilde{k}_2\, (1 + \mbox{log}{|\mN|})\, c(E^2_{opt}).}}
\end{eqnarray}
Equation~\eqref{eqn:potent} holds since $\tilde{k}_2\,(1+\mbox{log}|\mN|) \geqslant 1$.
Notice that $\textsc{pot}(\C_k)$ is the cost of the edge set obtained by selecting cycle $\C_k$ and then applying greedy scheme on the remaining $\mN\setminus
N_k$ nodes. Hence, edge set $E_k \cup H_A(\C_k)$ is a solution to Problem~\ref{prob:two}.
Therefore, after the first iteration of the \textbf{while} loop of Algorithm~\ref{algo:four}, we obtain a solution to Problem~\ref{prob:two}, the cost of which is bounded by $\tilde{k}_2\, (1 + \mbox{log}{|\mN|})\, c(E^2_{opt}) = \tilde{k}_2\, (1 + \mbox{log}{|\mN|})\, c(E_{opt})$. Thus Algorithm~\ref{algo:four} gives an approximate solution to Problem~\ref{prob:two} with approximation ratio $\tilde{k}_2 (1+\mbox{log}{|\mN|}))$. This completes the proof.
\end{proof}

The result below gives the computational complexity of Algorithm~\ref{algo:four}.
\begin{theorem}\label{th:comp}
Consider a structured system $(\bA,\bB=\mathbb{I}_m,\bC=\mathbb{I}_p)$ and feedback cost matrix $P$. Algorithm~4, which takes as input a set of cycles $\C$ and gives as output the feedback edge set $H_A$, has complexity $O(n^2\,|\C|^2)$, where $n$ denotes the system dimension and $|\C|$ is the number of cycles in $\D_R$. 
\end{theorem}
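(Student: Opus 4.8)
The plan is to bound the running time hierarchically: first the cost of a single invocation of the subroutine \textsc{Greedy} (Algorithm~\ref{algo:three}), then the number of such invocations made by Algorithm~\ref{algo:four}, and finally the bookkeeping overhead of the potential-based selection. A preliminary observation I would establish is that, since the cycles of $\D_R$ are elementary, every cycle $\C_i$ visits each condensed node of $\mN$ at most once, so $|N_i| \le |\mN| \le n$ and the number of feedback edges $|E_i|$ is likewise $O(n)$. Consequently, computing $c(E_i)$ and deleting the covered nodes and selected edges from a single cycle each cost $O(n)$.

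First I would analyze \textsc{Greedy}. Its \textbf{while} loop (Step~\ref{step:While}) runs at most $|\mN| \le n$ times, because each iteration moves at least one previously uncovered node of $\mN$ into $I$, and any cycle whose node set empties is discarded in Step~\ref{step:C_update}. Within one iteration, computing the price $\rho(\C_k)=c(E_k)/|N_k|$ for all at most $|\C|$ cycles, selecting a minimizer, and then updating $N_k,E_k$ for every cycle costs $O(n\,|\C|)$ by the preliminary observation. Hence a single call to \textsc{Greedy} runs in $O(n^2\,|\C|)$ time.

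Next I would count the calls to \textsc{Greedy} inside Algorithm~\ref{algo:four}. Setting up the potential requires, for each of the $|\C|$ cycles $\C_i$, one call $H_A(\C_i)=\textsc{Greedy}(\cup_j N_j\setminus N_i,\,E_i)$ followed by the $O(n)$ evaluation $\textsc{pot}(\C_i)=c(E_i)+c(H_A(\C_i))$ (Steps~\ref{step:rungreedy}--\ref{step:potdef}). This amounts to $|\C|$ invocations of an $O(n^2\,|\C|)$ subroutine, i.e.\ $O(n^2\,|\C|^2)$ in total, which I expect to be the dominant term. Selecting a minimum-potential cycle and updating $I_A,H_A,N_k,E_k$ (Steps~\ref{step:potmin}--\ref{step:NKupdate}) costs only $O(n+|\C|)$ and is absorbed into this bound.

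The step I expect to be the main obstacle is controlling the outer \textbf{while} loop of Algorithm~\ref{algo:four}, since a naive reading that recomputes all $|\C|$ potentials afresh in each of up to $|\mN|$ rounds would inflate the estimate to $O(n^3\,|\C|^2)$. Here I would invoke the analysis already carried out in the proof of Theorem~\ref{theorem:four}: the first cycle selected by minimum potential, together with its greedy completion $H_A(\C_k)$, already yields an edge set $E_k\cup H_A(\C_k)$ that covers all of $\mN$. Thus the potentials need to be evaluated only once and a feasible cover is produced in the first round, so the potential-setup cost $O(n^2\,|\C|^2)$ governs the overall running time, establishing the claimed $O(n^2\,|\C|^2)$ bound.
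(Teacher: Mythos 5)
Your decomposition lands on the right number, but it does not actually bound the running time of Algorithm~\ref{algo:four} as written, and this is where it diverges from the paper. The paper's accounting is: a single call to \textsc{Greedy} costs $O(n\,|\C|)$ (it charges the $|\C|$ price computations per round over $O(n)$ rounds), and Algorithm~\ref{algo:four} invokes \textsc{Greedy} $O(n\,|\C|)$ times, because the outer \textbf{while} loop runs up to $|\mN|=O(n)$ iterations and Step~\ref{step:potcal} recomputes all $|\C|$ potentials (hence re-runs \textsc{Greedy} on the updated $N_k,E_k$) in every iteration; the product is $O(n^2\,|\C|^2)$. You instead charge $O(n^2\,|\C|)$ per \textsc{Greedy} call and then argue that the potentials ``need to be evaluated only once.'' That last step is the gap: in the pseudocode, $I_A$ is augmented only by $N_j$ (Step~\ref{step:IAupadte}), not by the nodes covered by the greedy completion $H_A(\C_j)$, so after the first round $I_A=N_j\neq\mN$ in general and the loop genuinely continues, recomputing every potential in each round. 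What Theorem~\ref{theorem:four} certifies is that the set $E_k\cup H_A(\C_k)$ available after round one is already a feasible cover with the stated ratio; it does not say the algorithm terminates there. With your per-call bound, the algorithm as written costs $O(n^3\,|\C|^2)$ --- as you yourself note --- and the one-round argument only rescues a modified algorithm, not the one in the theorem statement.

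There are two legitimate repairs, and either would complete your proof. First, tighten the per-call bound to the paper's $O(n\,|\C|)$: over the entire run of one \textsc{Greedy} call, each node of $\mN$ is deleted from each cycle's node set at most once and each edge from each cycle's edge set at most once, so the total update cost in Step~\ref{step:N_update} amortizes to $O(n\,|\C|)$, and the prices $\rho(\C_k)$ can be maintained incrementally; then $O(n\,|\C|)$ calls give $O(n^2\,|\C|^2)$ for the full algorithm. Second, keep your $O(n^2\,|\C|)$ per-call bound but state explicitly that you are bounding the variant of Algorithm~\ref{algo:four} that returns $E_k\cup H_A(\C_k)$ after the first round, which is the object whose cost Theorem~\ref{theorem:four} actually controls. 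As it stands, your write-up silently substitutes the second reading for the algorithm named in the statement.
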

\begin{proof}
Finding all cycles in the digraph $\D_R$ has complexity $O(n^2  |\C|)$ \cite{Joh:75} as the number of SCCs in $\D(\bA)$ are in $O(n)$, where $n$ is the number of state nodes in the structured system $(\bA,\bB,\bC)$. Algorithm~\ref{algo:three} finds the price for $|\C|$ cycles in each iteration and the number of iterations are $O(n)$. Hence, Algorithm~\ref{algo:three} has complexity $O(n \,|\C|)$. In Algorithm~\ref{algo:four}, Algorithm~\ref{algo:three} is called as a subroutine $O(n\,|\C|)$ times. All the other steps in Algorithm~\ref{algo:four} are of linear complexity. Hence, the complexity of Algorithm~\ref{algo:four} is $(n^2\,|\C|^2)$. 
\end{proof}

\begin{rem}
{\it Cycle merging}: A cycle merging operation can be performed on the cycle set $\C$ in $\D_R$ before applying Algorithm~\ref{algo:four}. For all $\C_a,\C_b \in \C$, if $E_a \subset E_b$, then we merge the cycle $\C_a$ with cycle $\C_b$, i.e., $\C_b = (\{N_a \cup N_b\}:[E_b])$. 
\end{rem}
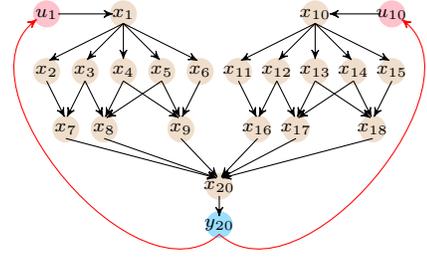
\begin{figure}[t]
\begin{center}
\begin{tikzpicture}[scale=0.51, ->,>=stealth',shorten >=1pt,auto,node distance=1.65cm, main node/.style={circle,draw,font=\scriptsize\bfseries}]
\definecolor{myblue}{RGB}{80,80,160}
\definecolor{almond}{rgb}{0.94, 0.87, 0.8}
\definecolor{bubblegum}{rgb}{0.99, 0.76, 0.8}
\definecolor{columbiablue}{rgb}{0.61, 0.87, 1.0}

  \fill[almond] (-1,-1.5) circle (10.0 pt);
  \fill[almond] (-2,-1.5) circle (10.0 pt);
  \fill[almond] (0,0) circle (10.0 pt);
  \fill[almond] (0,-1.5) circle (10.0 pt);
  \fill[almond] (1,-1.5) circle (10.0 pt);
  \fill[almond] (2,-1.5) circle (10.0 pt);
  \node at (-2,-1.5) {\scriptsize $x_2$};
  \node at (-1,-1.5) {\scriptsize $x_3$};
  \node at (0,0) {\scriptsize $x_1$};
  \node at (0,-1.5) {\scriptsize $x_4$};
  \node at (1,-1.5) {\scriptsize $x_5$};
  \node at (2,-1.5) {\scriptsize $x_6$};

  \fill[bubblegum] (-2,0) circle (10.0 pt);
   \node at (-2,0) {\scriptsize $u_1$};
   
   \fill[bubblegum] (7,0) circle (10.0 pt);
   \node at (7,0) {\scriptsize $u_{10}$};

  \fill[almond] (-1.5,-3) circle (10.0 pt);
  \fill[almond] (-0.5,-3) circle (10.0 pt);
  \fill[almond] (1.5,-3) circle (10.0 pt);

   \node at (-1.5,-3.0) {\scriptsize $x_7$};
   \node at (-0.5,-3.0) {\scriptsize $x_8$};
   \node at (1.5,-3.0) {\scriptsize $x_9$};
   
;
   
  \draw (-1.7,0)  ->   (-0.2,0);
  \draw (6.7,0)  ->   (5.3,0);   
  


  \draw (0,-0.25)  ->   (-2,-1.25);
  \draw (0,-0.25)  ->   (-1,-1.25);
  \draw (0,-0.25)  ->   (0,-1.25);
  \draw (0,-0.25)  ->   (1,-1.25);
  \draw (0,-0.25)  ->   (2,-1.25);

  \draw (-2,-1.75)  ->   (-1.5,-2.75);
  \draw (-1,-1.75)  ->   (-1.5,-2.75);
  \draw (-1,-1.75)  ->   (-0.5,-2.75);
  \draw (0,-1.75)  ->   (-0.5,-2.75);
  \draw (1,-1.75)  ->   (1.5,-2.75);
  \draw (1,-1.75)  ->   (-0.5,-2.75);
  \draw (2,-1.75)  ->   (1.5,-2.75);
  \draw (0,-1.75)  ->   (1.5,-2.75); 
  
  \fill[almond] (3,-1.5) circle (10.0 pt);
  \fill[almond] (4,-1.5) circle (10.0 pt);
  \fill[almond] (5,0) circle (10.0 pt);
  \fill[almond] (5,-1.5) circle (10.0 pt);
  \fill[almond] (6,-1.5) circle (10.0 pt);
  \fill[almond] (7,-1.5) circle (10.0 pt);
  \node at (3,-1.5) {\scriptsize $x_{11}$};
  \node at (4,-1.5) {\scriptsize $x_{12}$};
  \node at (5,0) {\scriptsize $x_{10}$};
  \node at (5,-1.5) {\scriptsize $x_{13}$};
  \node at (6,-1.5) {\scriptsize $x_{14}$};
  \node at (7,-1.5) {\scriptsize $x_{15}$};

  \fill[almond] (3.5,-3) circle (10.0 pt);
  \fill[almond] (4.5,-3) circle (10.0 pt);
  \fill[almond] (6.5,-3) circle (10.0 pt);
  \fill[almond] (2.5,-4.5) circle (10.0 pt);
  \fill[columbiablue] (2.5,-5.5) circle (10.0 pt);

   \node at (3.5,-3.0) {\scriptsize $x_{16}$};
   \node at (4.5,-3.0) {\scriptsize $x_{17}$};
   \node at (6.5,-3.0) {\scriptsize $x_{18}$};
   
   \node at (2.5,-4.5) {\scriptsize $x_{20}$};
   
   \node at (2.5,-5.5) {\scriptsize $y_{20}$};
;
   
  \draw (-1.5,-3.25)  ->   (2.5,-4.3);
  \draw (-0.5,-3.25)  ->   (2.5,-4.3);
  \draw (1.5,-3.25)  ->   (2.5,-4.3);   
  \draw (3.5,-3.25)  ->   (2.5,-4.3);
  \draw (4.5,-3.25)  ->   (2.5,-4.3);
  \draw (6.5,-3.25)  ->   (2.5,-4.3);

  \draw (5,-0.25)  ->   (3,-1.25);
  \draw (5,-0.25)  ->   (4,-1.25);
  \draw (5,-0.25)  ->   (5,-1.25);
  \draw (5,-0.25)  ->   (6,-1.25);
  \draw (5,-0.25)  ->   (7,-1.25);

  \draw (3,-1.75)  ->   (3.5,-2.75);
  \draw (4,-1.75)  ->   (3.5,-2.75);
  \draw (4,-1.75)  ->   (4.5,-2.75);
  \draw (5,-1.75)  ->   (4.5,-2.75);
  \draw (6,-1.75)  ->   (6.5,-2.75);
  \draw (6,-1.75)  ->   (4.5,-2.75);
  \draw (7,-1.75)  ->   (6.5,-2.75);
  \draw (5,-1.75)  ->   (6.5,-2.75);
  
  \draw (2.5,-4.75)  ->   (2.5,-5.35);

\path[every node/.style={font=\sffamily\small}]
(2.5,-5.75)  edge[red, bend left = 92] node [left] {} (-2.2,-0.1)
(2.5,-5.75) edge[red, bend right = 92] node [left] {} (7.2,-0.1
);
%
\end{tikzpicture}
\vspace*{-9 mm}
\caption{\small Illustrative figure demonstrating the merging operation. Each state vertex $x_k$ has input $u_k$ and output $y_k$ connected which are omitted for many $x_k$'s for the sake of clarity, i.e, feedback edges $(y_k,u_k)$ for all $k=1,\ldots,20$ are present in the system. }
\label{fig:merge}
\end{center}
\vspace*{-7 mm}
\end{figure} 
Notice that after the merging operation, the cost $c(E_b)$ of selecting the cycle $\C_b$ does not change, but the number of nodes covered can increase resulting in a better ratio of cost to nodes covered, $\rho(\C_b)$. The bound achieved in Algorithm~\ref{algo:four} has a factor of $\tilde{k}_2$. As a result of this merging operation, the optimal edge set does not change, but the multiplicity $\tilde{k}_2$ can decrease resulting in a better approximation and lower complexity of Algorithm~\ref{algo:four}. An illustrative example showing merging operation is shown in Figure~\ref{fig:merge}. Assume that an optimal solution to the given system is the set of edges $(y_{20},u_1)$ and $(y_{20},u_{10})$. Then both $\tilde{k}_1$ and $\tilde{k}_2$ are $8$ and can possibly be very high as the number of nodes increases. If we perform the merging operation as mentioned above, $\tilde{k}_2$ becomes $1$. Broadly, the merging operation simplifies the proposed algorithm and requires more detailed analysis.

\begin{rem}
Notice that in Algorithm~\ref{algo:four}, only the first iteration of the \textbf{while} loop is used to prove an approximation ratio of $\tilde{k}_2\, ({1+\rm log(|\mN|)})$. The cost of the final edge set obtained when Algorithm~\ref{algo:four} terminates will be atmost $\tilde{k}_2\, ({1+\rm log(|\mN|)})\,(c(E_{opt}))$, i.e., lesser cost than $\tilde{k}_2\,({1+ \rm log(|\mN|)})\,(c(E_{opt}))$.
\end{rem}
The following section considers two special cases of Problem~1 of practical importance and we propose polynomial time algorithms to obtain approximate and optimal solutions to the two cases, respectively. 
\vspace*{-1 mm}
\section{Special cases}\label{sec:spcases}
In this section, we consider two special graph topologies: (i)~structured systems with back-edge feedback structure and (ii)~hierarchical network.
\subsection{Structured systems with back-edge feedback structure}
In this subsection, we consider a special class of structured systems with a constraint on the structure of the feedback matrix. We assume that the only feasible feedback edges $(y_j,u_i)'$s are those edges where there exists a directed path from input $u_i$ to output $y_j$ in $\D(\bA,\bB,\bC)$. In other words, the assumption states that an output from a state is fed back to an input which can directly or indirectly influence the state associated with that output. A feedback structure that satisfies this constraint is referred as a {\em back-edge} feedback structure. Note that, inputs and outputs are dedicated here. For this class of systems we propose a polynomial time algorithm to find an approximate solution to Problem~\ref{prob:one} with an optimal approximation ratio. We describe below the graph topology considered in this subsection.


\begin{defn}\label{defn:ance}
Consider a digraph $\D_G:=(V_G,E_G)$. Let the nodes $v_i, v_j \in V_G$ be such that there exists a directed path from $v_i$ to $v_j$. Then, $v_i$ is referred as an \underline{ancestor} of $v_j$. Also, node $v_j$ is referred as a \underline{descendant} of node $v_i$.
\end{defn}

\begin{assume}\label{assume:spcase1}
Consider a structured system $(\bA,\bB={\mathbb{I}_m},\bC={\mathbb{I}_p})$ and a feedback cost matrix $P \in \mathbb{R}^{m\times p}$, where $P_{ij}$ denotes the cost of feeding the $j^{\rm th}$ output to the $i^{\rm th}$ input. Then, $P_{ij} =\infty$, if the input node $u_i$ is \underline{not} an ancestor of the output node $y_j$ in  $\D(\bA,\bB,\bC)$.
\end{assume}
Recall that if $P_{ij}=\infty$, then the feedback edge $\bK_{ij}$ is infeasible. Thus Assumption~\ref{assume:spcase1} concludes that an output $y_j$ can be fed to an input $u_i$ only if $u_i$ is an ancestor of $y_j$ in $\D(\bA,\bB,\bC)$. If $u_i$ is not an ancestor of $y_j$, then $(y_j,u_i)$ is an infeasible feedback link. An illustrative example showing feasible and infeasible feedback connections in a structured system is presented in Figure~\ref{fig:hierar1}.
\begin{figure}[t]
\begin{minipage}{.22\textwidth}
\begin{tikzpicture}[scale=0.40, ->,>=stealth',shorten >=1pt,auto,node distance=1.65cm, main node/.style={circle,draw,font=\scriptsize\bfseries}]
\definecolor{myblue}{RGB}{80,80,160}
\definecolor{almond}{rgb}{0.94, 0.87, 0.8}
\definecolor{bubblegum}{rgb}{0.99, 0.76, 0.8}
\definecolor{columbiablue}{rgb}{0.61, 0.87, 1.0}


\fill[almond] (0,0) circle (12.0 pt);  
  \node at (0,0) {\scriptsize $x_1$};

  \fill[almond] (-4,-2) circle (12.0 pt);
  \fill[almond] (0,-2) circle (12.0 pt);
  \fill[almond] (4,-2) circle (12.0 pt);
  
  \node at (-4,-2) {\scriptsize $x_2$};  
  \node at (0,-2) {\scriptsize $x_3$};  
  \node at (4,-2) {\scriptsize $x_4$};
   
  \fill[almond] (-4.8,-4) circle (12.0 pt);
  \fill[almond] (-3.2,-4) circle (12.0 pt);
   \node at (-4.8,-4) {\scriptsize $x_5$};
   \node at (-3.2,-4) {\scriptsize $x_6$};
   
   \fill[almond] (-0.8,-4) circle (12.0 pt);
   \fill[almond] (0.8,-4) circle (12.0 pt);
   \node at (-0.8,-4) {\scriptsize $x_{7}$};
   \node at (0.8,-4) {\scriptsize $x_{8}$};

  \fill[almond] (4,-4) circle (12.0 pt);
   \node at (4,-4) {\scriptsize $x_{9}$}; 
   \fill[almond] (0.8,-6) circle (12.0 pt);
   \node at (0.8,-6) {\scriptsize $x_{13}$};
   
   \fill[almond] (-3.2,-6) circle (12.0 pt);
   \fill[almond] (-2.2,-6) circle (12.0 pt);
  \node at (-3.2,-6) {\scriptsize $x_{11}$};
  \node at (-2.2,-6) {\scriptsize $x_{12}$};
  
  \fill[almond] (4,-6) circle (12.0 pt);
  \node at (4,-6) {\scriptsize $x_{14}$};

   \fill[almond] (0.1,-8) circle (12.0 pt);
   \fill[almond] (1.7,-8) circle (12.0 pt);
   \node at (0.1,-8) {\scriptsize $x_{17}$};
   \node at (1.7,-8) {\scriptsize $x_{18}$};
   
   \fill[almond] (-4.0,-8) circle (12.0 pt);
   \fill[almond] (-2.4,-8) circle (12.0 pt);
   \node at (-4.0,-8) {\scriptsize $x_{15}$};
   \node at (-2.4,-8) {\scriptsize $x_{16}$};
   
   \fill[bubblegum] (-4.8,-2.6) circle (8.0 pt);
   \node at (-4.8,-2.6) {\scriptsize $u_1$};
   \fill[columbiablue] (-4.6,-6) circle (8.0 pt);
   \node at (-4.6,-6) {\scriptsize $y_1$};
  
\fill[columbiablue] (3,-8) circle (8.0 pt);
   \node at (3,-8) {\scriptsize $y_2$};
   \fill[bubblegum] (4,-0.5) circle (8.0 pt);
   \node at (4,-0.5) {\scriptsize $u_2$};  
  
  \draw (4,-0.8)->(4,-1.7);
  \draw (2.1,-8)->(2.8,-8);

  \draw (-3.5,-6)->(-4.4,-6);
	\draw (-4.8,-2.9)->(-4.8,-3.7);

	\draw (0,-0.3)  ->   (0,-1.7);
	\draw (0,-0.3)  ->   (-4,-1.7);
	\draw (0,-0.3)  ->   (4,-1.7);
	
	\draw (-4.8,-3.6)-> (-4,-2.3);
	\draw (-4,-2.3)  ->   (-3.2,-3.7);
	
	\draw (0,-2.3)  ->   (0.8,-3.7);
	
	\draw (4,-2.3)  ->   (4,-3.7);
	
	\draw (0.8,-4.3)  ->   (0.8,-5.7);
	
	\draw (0.8,-6.3)  ->   (0.1,-7.7);
	\draw  (1.7,-7.7) -> (0.8,-6.3);
	
	\draw (-3.2,-4.3) -> (-3.2,-5.7);
	\draw (-3.2,-4.3) -> (-2.2,-5.7);
	
	\draw (4,-4.3) -> (4,-5.7);
	
	\draw (-3.2,-6.3) -> (-4.0,-7.7);
	\draw (-3.2,-6.3) -> (-2.4,-7.7);
	\draw (-2.2,-5.7) -> (-0.8,-4.2);
	\draw (0.8,-3.7) -> (4,-2.2);
	\draw (1.7,-7.7) - > (4,-4.2)
%
 
%

;
\path[every node/.style={font=\sffamily\small}]
(-4.9,-6) edge[red,thick, bend left = 50] node [left] {} (-5.05,-2.6)
(2.8,-7.7)  edge[blue,thick, dashed, bend left = 50] node [left] {} (3.8,-0.5);
\end{tikzpicture}
\end{minipage}\hspace{0.6 cm}
\begin{minipage}{.22\textwidth}
\caption{\small Illustrative figure demonstrating feasible feedback connections. Under Assumption~\ref{assume:spcase1}, feedback edge $(y_1,u_1)$ is feasible while $(y_2,u_2)$ is infeasible.}
\label{fig:hierar1}
\end{minipage}
\vspace*{-5 mm}
\end{figure}
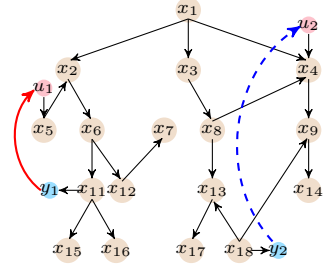


\begin{cor}\label{cor:spcase1}
Consider a structured system $(\bA,\bB={\mathbb{I}_m},\bC={\mathbb{I}_p})$ and a feedback cost matrix $P$ that satisfies Assumption~\ref{assume:spcase1}. For this structured system the following hold: 
\begin{enumerate}
\item[(i)] Problem~\ref{prob:one} is NP-hard,
\item[(ii)] Problem~\ref{prob:one} is inapproximable to a multiplicative factor of ${\rm log\,}n$, where $n$ is the number of states in the system.
\end{enumerate}
\end{cor}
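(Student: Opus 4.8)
The plan is to observe that the polynomial-time reduction from the weighted set cover problem given in Algorithm~\ref{algo:one}---the very reduction that establishes Theorem~\ref{theorem:one} and Theorem~\ref{theorem:two}---already outputs structured systems and cost matrices that lie in the back-edge class, i.e.\ that satisfy Assumption~\ref{assume:spcase1}. Once this is verified, both claims follow with no additional work: the NP-hardness reduction and the approximation-preserving reduction are valid inside the restricted class, so I only have to check that every \emph{feasible} feedback edge produced by the construction is a back-edge.

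First I would identify the feasible feedback edges produced by Step~\ref{step:P}. The only finite entries of $P$ are $P_{(r+1)j}=w(\Ss_j)$ and $P_{jj}=0$ for $j\in\{1,\dots,r\}$, corresponding to the feedback edges $(y_j,u_{r+1})$ and $(y_j,u_j)$, respectively. Since Assumption~\ref{assume:spcase1} only requires that every \emph{non-ancestor} input--output pair be assigned infinite cost (it does not force ancestor pairs to be finite), it suffices to show that for each of these two families the input is an ancestor of the output in $\D(\bA,\bB,\bC)$; all remaining pairs are already set to $\infty$ and hence cause no violation of Assumption~\ref{assume:spcase1}.

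Next I would exhibit the required directed paths using the edge structure created in Steps~\ref{step:A}--\ref{step:C}. For the zero-cost edge $(y_j,u_j)$, the input $u_j$ feeds the set node $x_{N+j}$ and $x_{N+j}$ feeds the output $y_j$, so $u_j\to x_{N+j}\to y_j$ is a directed path and $u_j$ is an ancestor of $y_j$. For the weighted edge $(y_j,u_{r+1})$, the input $u_{r+1}$ feeds $x_{N+r+1}$, which by Step~\ref{step:A} has an edge to every element node $x_i$, $i\in\{1,\dots,N\}$; choosing any $i\in\Ss_j$ (nonempty without loss of generality, since empty sets may be discarded from any set cover instance) gives the edge $x_i\to x_{N+j}$, followed by $x_{N+j}\to y_j$. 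Hence $u_{r+1}\to x_{N+r+1}\to x_i\to x_{N+j}\to y_j$ is a directed path, so $u_{r+1}$ is an ancestor of $y_j$. Both feasible families are therefore back-edges, and the constructed instance satisfies Assumption~\ref{assume:spcase1}.

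Finally, because the constructed instance lies in the back-edge class, Theorem~\ref{theorem:one} proves claim~(i) and Theorem~\ref{theorem:two} proves claim~(ii) verbatim on this restricted family. I expect the only genuine obstacle to be the verification of the ancestor relation for the weighted edges $(y_j,u_{r+1})$: unlike the trivial zero-cost edges, this requires tracing a path through the $x_{N+r+1}$-to-element-node layer and the element-node-to-set-node layer, and it hinges on each set $\Ss_j$ being nonempty so that a connecting element node exists.
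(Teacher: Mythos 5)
Your proposal is correct and takes the same route as the paper: the paper disposes of this corollary in a single sentence by asserting that the instance produced by the set-cover reduction of Algorithm~\ref{algo:one} already satisfies Assumption~\ref{assume:spcase1}, so Theorems~\ref{theorem:one} and~\ref{theorem:two} apply within the back-edge class. You simply supply the explicit path-tracing verification (the $u_j\to x_{N+j}\to y_j$ and $u_{r+1}\to x_{N+r+1}\to x_i\to x_{N+j}\to y_j$ paths) that the paper leaves implicit, which is a sound and complete filling-in of that claim.
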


The above corollary is a consequence of the fact that the structured system and the feedback cost matrix obtained in the reduction given in Algorithm~\ref{algo:two} and the NP-hardness proof given in Theorem~\ref{theorem:one} satisfy Assumption~\ref{assume:spcase1}. 

In this subsection, we present a polynomial time approximation algorithm that finds a $(\log\,n)$-approximate solution to Problem~\ref{prob:one}. This algorithm is based on a reduction of Problem~\ref{prob:one} to an instance of the weighted set cover problem. We reduce a general instance of Problem~\ref{prob:one} satisfying Assumption~\ref{assume:spcase1} to an instance of the weighted set cover problem in such a way that an approximation algorithm of the weighted set cover problem will serve as an approximation algorithm for Problem~\ref{prob:one}. To achieve this, we reduce the weighted set cover problem to Problem~\ref{prob:one} and prove in Theorem~\ref{th:spec_1_eps} that any $\epsilon$-optimal solution of the weighted set cover problem is an $\epsilon$-optimal solution to Problem~\ref{prob:one}. 

\begin{algorithm}
\caption{Pseudo-code for reducing a general instance of Problem~\ref{prob:one} following 
Assumption~\ref{assume:spcase1}  to an instance of the weighted set cover problem denoted by $(\pazocal{U}_s, \pazocal{P}_s, w_s)$.\label{algo:five}}
\begin{algorithmic}
\State \textit {\bf Input:} Structured system $(\bA,\bB={\mathbb{I}_m},\bC={\mathbb{I}_p})$ and feedback cost matrix $P$
\State \textit{\bf Output:} Weighted set cover problem $(\pazocal{U}_s, \pazocal{P}_s, w_s)$ 
\end{algorithmic}
\begin{algorithmic}[1]
\State Define $\bK^P:=\{\bK^P_{ij}=\*$ if $P_{ij}\neq\infty\}$\label{step:bkP}
\State Define an instance of the weighted set cover problem as: 
\State Universe $\pazocal{U}_s \leftarrow \{x_1,\ldots, x_n\}$\label{step:Univ_2}
\State Set $\pazocal{P}_s=\{\Ss_1,\ldots,\Ss_{|E_{K^P}|}\}$ \label{step:Setlist_2}
\For {$e_d = (y_j,u_i) \in E_{K^P}$}
    \State $\Ss_d := \{x_a$: $x_a$ lies in an SCC in the digraph formed by adding the feedback edge $e_d = (y_j, u_i)$ to $\D(\bA,\bB,\bC)\}$\label{step:Set_2}
    \State Weight $w_s(\Ss_d) = P_{ij}$\label{step:weight_2}
\EndFor
\State Let $\Ss'$ be a solution to the weighted set cover problem $(\pazocal{U}_s,\pazocal{P}_s,w_s)$\label{step:cover_2}
\State The feedback matrix $\bK(\Ss')$ selected under $\Ss'$, $\bK(\Ss')\leftarrow \{\bK(\Ss')_{ij} =\*: \Ss_d\in \Ss'$ and $e_d = (y_j,u_i)\}$ \label{step:edge_2}
\State Cost of the edge set $\bK(\Ss')$, $P(\bK(\Ss')) = \sum_{ (i,j):\bK(\Ss')_{ij}=\*} P_{ij}$\label{step:cost_2}
\end{algorithmic}
\end{algorithm}

Algorithm~\ref{algo:five} gives the pseudo-code for reducing a general instance of Problem~\ref{prob:one} to an instance of the weighted set cover problem denoted by $(\pazocal{U}_s, \pazocal{P}_s, w_s)$. We define a feedback matrix $\bK^P$, such that $\bK^P$ consists of all feasible feedback edges (Step~\ref{step:bkP}). The universe $\pazocal{U}_s$ of the weighted set cover problem consists of all states $\{x_1, \ldots, x_n\}$ of the system (Step~\ref{step:Univ_2}). The set $\pazocal{P}_s$ is defined in such a way that a set $\Ss_d \in \pazocal{P}_s$ corresponds to a feedback edge $(y_j,u_i)=e_d$ (Step~\ref{step:Setlist_2}). Thus $|\pazocal{P}_s|=|E_{K^P}|$ and each set $\Ss_d$  consists of state nodes in $\D(\bA)$ that lie in an SCC in the digraph formed by adding the feedback edge $(y_j,u_i)$ to $\D(\bA,\bB,\bC)$ (Step~\ref{step:Set_2}). The weight of the set $\Ss_d$ is assigned the cost of the feedback edge $(y_j,u_i)$ (Step~\ref{step:weight_2}). We denote a solution to the weighted set cover problem $(\U_s, \P_s, w_s)$ by $\Ss'$ (Step~\ref{step:cover_2}). With respect to $\Ss'$ the feedback matrix selected is denoted by $\bK(\Ss')$ (Step~\ref{step:edge_2}) and its cost is denoted by $P(\bK(\Ss'))$ (Step~\ref{step:cost_2}). The result below proves that  $\bK(\Ss')$ is a solution to Problem~\ref{prob:one}.


\begin{theorem}\label{th:spec_1_eps}
Consider a structured system $(\bA,\bB={\mathbb{I}_m},\bC={\mathbb{I}_p})$ and cost matrix $P$ such that Assumption~\ref{assume:spcase1} holds. Also, let $\B(\bA)$ has a perfect matching. Then, 
\begin{enumerate}
\item[(i)] $\Ss'$ is a solution to the weighted set cover problem $(\pazocal{U}_s,\pazocal{P}_s,w_s)$ constructed using Algorithm~\ref{algo:five} if and only if $\bK(\Ss')$ is a solution to Problem~\ref{prob:one}. 
\item[(ii)] $\Ss^\*$ is an optimal solution to the weighted set cover problem $(\U_s, \P_s, w_s)$ implies $\bK(\Ss^\*)$ is an optimal solution to Problem~\ref{prob:one}, i.e., $P(\bK(\Ss^\*)) = P(\bK^\*)$.
\item[(iii)] For $\epsilon \geqslant 1$, if $\Ss'$ is an $\epsilon$-optimal solution to the weighted set cover problem, then $\bK(\Ss')$ is an $\epsilon$-optimal solution to Problem~\ref{prob:one}, i.e., $w_s(\Ss')\leqslant \epsilon\, w_s(\Ss^\*)$ implies $P(\bK(\Ss'))\leqslant \epsilon\,P(\bK^\*)$.
\end{enumerate}
\end{theorem}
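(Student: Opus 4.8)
The plan is to leverage the structure of Algorithm~\ref{algo:five}, which sets up a cost-preserving correspondence between feasible feedback edges and the sets of the instance $(\U_s,\P_s,w_s)$, and then to deduce (ii) and (iii) from (i) in exactly the manner of Theorems~\ref{theorem:one} and~\ref{theorem:two}. I would first record the cost identity: since distinct feasible feedback edges $(y_j,u_i)$ correspond to distinct entries $(i,j)$ of $\bK$ and to distinct sets $\Ss_d$ with $w_s(\Ss_d)=P_{ij}$, for any $\Ss'\subseteq\P_s$ we have $P(\bK(\Ss'))=\sum_{\Ss_d\in\Ss'}w_s(\Ss_d)=w_s(\Ss')$. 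Under Assumption~\ref{assume:one} ($\B(\bA)$ has a perfect matching), condition~(b) of Proposition~\ref{prop:one} holds without any feedback edge, so throughout I only need to track condition~(a): every state node must lie in an SCC of the closed-loop digraph that contains a feedback edge.

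For the forward implication of (i), suppose $\Ss'$ covers $\U_s=\{x_1,\dots,x_n\}$. Fix a state $x_a$; it lies in some $\Ss_d\in\Ss'$ with $e_d=(y_j,u_i)$, meaning $x_a$ sits in the SCC created by adding the single edge $e_d$ to $\D(\bA,\bB,\bC)$. The key observation is that adding further edges to a digraph can only enlarge or merge SCCs, never split them; hence in the full closed-loop digraph $\D(\bA,\bB,\bC,\bK(\Ss'))$ the node $x_a$ still lies in an SCC containing $e_d$. As $x_a$ is arbitrary, condition~(a) holds and $\bK(\Ss')$ solves Problem~\ref{prob:one}.

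The reverse implication of (i) is the crux and the step I expect to be the main obstacle. Here one assumes $\bK(\Ss')$ solves Problem~\ref{prob:one} and must show $\Ss'=\{\Ss_d:e_d\in\bK(\Ss')\}$ covers every state. Fix $x_a$; by condition~(a) it belongs to a closed-loop SCC that contains at least one feedback edge, equivalently $x_a$ lies on a closed walk that uses feedback edges of $\bK(\Ss')$. What must be established is that a \emph{single} feedback edge $e_d=(y_j,u_i)\in\bK(\Ss')$ already places $x_a$ in a nontrivial SCC, i.e. that for one such edge there is a directed path in $\D(\bA)$ from the state actuated by $u_i$ to $x_a$ and from $x_a$ to the state sensed by $y_j$; this is exactly the membership $x_a\in\Ss_d$. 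This is where Assumption~\ref{assume:spcase1} (the back-edge/ancestor structure) is indispensable: because every feasible feedback edge runs from a descendant back to an ancestor, I would argue along the closed walk through $x_a$ that the ``incoming'' and ``outgoing'' feedback edges relative to $x_a$ can be taken to coincide, so that one back-edge spans $x_a$. Making this selection rigorous --- ruling out configurations in which $x_a$ is strongly connected only through the combined action of several back-edges, none of which spans it alone --- is the delicate part, and I would attempt it via a topological-rank argument on the condensation of $\D(\bA)$ together with the ancestor constraint of Assumption~\ref{assume:spcase1}.

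Finally, (ii) and (iii) follow from (i) and the cost identity. For (ii), the forward implication gives that $\bK(\Ss^\*)$ is feasible with $P(\bK(\Ss^\*))=w_s(\Ss^\*)$, so $P(\bK^\*)\leqslant w_s(\Ss^\*)$; conversely, applying the reverse implication to an optimal $\bK^\*$ (which, having finite cost, uses only feasible edges) yields a cover of cost $P(\bK^\*)$, whence $w_s(\Ss^\*)\leqslant P(\bK^\*)$, giving $P(\bK(\Ss^\*))=P(\bK^\*)$. For (iii), if $w_s(\Ss')\leqslant\epsilon\,w_s(\Ss^\*)$ then, using feasibility of $\bK(\Ss')$, the cost identity, and (ii), $P(\bK(\Ss'))=w_s(\Ss')\leqslant\epsilon\,w_s(\Ss^\*)=\epsilon\,P(\bK^\*)$, so $\bK(\Ss')$ is $\epsilon$-optimal --- precisely the structure of the argument in Theorem~\ref{theorem:two}.
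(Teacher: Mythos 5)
Your only-if direction, your cost identity $P(\bK(\Ss'))=w_s(\Ss')$, and your derivations of (ii) and (iii) from (i) all match the paper's argument. The genuine gap is exactly where you place it: the if-part of (i). You never actually prove that a state lying in a closed-loop SCC of $\D(\bA,\bB,\bC,\bK(\Ss'))$ containing some feedback edge must be spanned by a \emph{single} feedback edge $e_d$ of $\bK(\Ss')$ (which is what membership $x_a\in\Ss_d$ requires, since Step~\ref{step:Set_2} defines $\Ss_d$ by adding $e_d$ \emph{alone}); you only gesture at a ``topological-rank'' argument and flag it as delicate. For what it is worth, the paper's own if-part dispatches this step in one sentence, asserting that $x_i$ lying in an SCC of $\D(\bA,\bB,\bC,\widetilde{\bK})$ with feedback edge $e_j$ already implies $x_i\in\Ss_j$, so the difficulty you isolate is not resolved there either.

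Your suspicion is well founded, and the coinciding-back-edge argument you sketch cannot be completed under Assumption~\ref{assume:spcase1} alone. Take five singleton SCCs, each with a self-loop so that $\B(\bA)$ has a perfect matching, say $\mN_{a_1},\mN_{a_2},\mN_x,\mN_{b_1},\mN_{b_2}$, with condensation edges $\mN_{a_2}\to\mN_x\to\mN_{b_1}$, $\mN_{a_1}\to\mN_{b_1}$, $\mN_{a_1}\to\mN_{b_2}$, $\mN_{a_2}\to\mN_{b_2}$, inputs $u_1,u_2$ at $\mN_{a_1},\mN_{a_2}$ and outputs $y_1,y_2$ at $\mN_{b_1},\mN_{b_2}$; every feasible feedback edge here is a back-edge, so Assumption~\ref{assume:spcase1} holds. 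With $\bK=\{(y_1,u_1),(y_2,u_2)\}$ the state in $\mN_x$ lies on the closed walk $\mN_x\to\mN_{b_1}\to y_1\to u_1\to\mN_{a_1}\to\mN_{b_2}\to y_2\to u_2\to\mN_{a_2}\to\mN_x$, so condition~(a) of Proposition~\ref{prop:one} is satisfied, yet neither $(y_1,u_1)$ nor $(y_2,u_2)$ alone places $\mN_x$ on a cycle, i.e., that state belongs to neither $\Ss_{(y_1,u_1)}$ nor $\Ss_{(y_2,u_2)}$. Hence the single-edge spanning property is false at this level of generality; it does hold when, in addition, each input--output pair is joined by at most one directed path (the uniqueness exploited in Lemma~\ref{lem:spcase2-1} under the hierarchical Assumption~\ref{assume:spcase2}), and any complete proof of the if-part must invoke structure of that kind rather than the heuristic that the incoming and outgoing back-edges at $x_a$ can be made to coincide.
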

\begin{proof}
\textbf{(i)~Only-if part}: Here we assume that $\Ss'$ is a solution to the weighted set cover problem and then show that $\bK(\Ss')$ is a solution to Problem~\ref{prob:one}.  Note that in $\B(\bA)$ there exists a perfect matching, and hence condition~(b) in Proposition~\ref{prop:one} is satisfied without using any feedback edge. As a result, only condition~(a) has to be satisfied. Since $\Ss'$ is a solution to the weighted set cover problem, $\cup_{\Ss_d \in \Ss'}\Ss_d = \U_s = \{x_1, \ldots, x_n\}$. Consider an arbitrary state $x_i$ such that $x_i \in \Ss_j$ for some $\Ss_j \in \Ss'$. We now show that $x_i$ lies in an SCC in $\D(\bA,\bB,\bC,\bK(\Ss'))$. Note that $x_i \in \Ss_j$ implies that $x_i$ lies in an SCC in a digraph obtained by adding feedback edge $e_j = (y_b, u_a)$ to $\D(\bA,\bB,\bC,\bK^P)$   (see Step~\ref{step:Set_2}). By construction of $\bK(\Ss')$ (see Step~\ref{step:edge_2}), $\bK(\Ss')_{ab}=\*$. This concludes that $x_i$ lies in an SCC with a feedback edge in $\bK(\Ss')$. As $x_i$ is arbitrary the only-if part follows.

\noindent\textbf{(i)~If part}: Here we assume that $\widetilde{\bK}$ is a solution to Problem~\ref{prob:one} and then show that $\widetilde{\Ss}$, where $\widetilde{\Ss}:= \{\Ss_j \in \P_s$: $e_j =(y_b,u_a)$ and $\widetilde{\bK}_{ab} = \*\}$, is a solution to the weighted set cover problem. Consider an arbitrary element $x_i \in \U_s$. Since $\widetilde{\bK}$ is a solution to Problem~\ref{prob:one}, there exists some $e_j =(y_b,u_a)$ such that $\widetilde{\bK}_{ab}=\*$ and $x_i$ lies in an SCC in $\D(\bA,\bB,\bC,\widetilde{\bK})$ with feedback~edge~$e_j$. By Step~\ref{step:Set_2} of Algorithm~\ref{algo:five}, this implies that $x_i \in \Ss_j$. Since $\widetilde{\bK}_{ab}=\* $ and  $e_j=(y_b,u_a)$, by definition of $\widetilde{\Ss}$, $\Ss_j \in \widetilde{\Ss}$. Hence $\widetilde{\Ss}$ covers the element $x_i \in \U_s$. Since $x_i$ is arbitrary, the  if-part follows. This completes the proof of~$(i)$.

\noindent\textbf{(ii)}: Given $\Ss^\*$ is an optimal solution to $(\U_s, \P_s, w_s)$. By Theorem~\ref{th:spec_1_eps}~(i), $\bK(\Ss^\*)$ is a solution to Problem~\ref{prob:one}. We need to show that $\bK(\Ss^\*)$ is an optimal solution to Problem~\ref{prob:one}. Suppose not. Then there exists $\bK' \in \K$, i.e.,  a solution to Problem~\ref{prob:one},  and $P(\bK') < P(\bK(\Ss^\*))$. From if-part of Theorem~\ref{th:spec_1_eps}~(i), corresponding to $\bK'$ there exists $\widetilde{\Ss}:=\{\Ss_j: e_j=(y_b,u_a)$ and $\bK'_{ab}=\*\}$ which a solution to $(\U_s, \P_s, w_s)$.  Using Steps~\ref{step:weight_2},~\ref{step:edge_2} and~\ref{step:cost_2}, $w_s({\Ss^\*})=P(\bK(\Ss^\*))$ and $w_s(\widetilde{\Ss})=P(\bK')$.  As $P(\bK') < P(\bK(\Ss^\*))$, this implies $w_s(\widetilde{\Ss}) < w_s({\Ss^\*})$. This contradicts the fact that $\Ss^\*$ is an optimal solution to $(\U_s, \P_s, w_s)$. Thus $\bK(\Ss^\*)$ is an optimal solution to Problem~\ref{prob:one}.

\noindent\textbf{(iii)}:  Let $\Ss^\*$ and $\bK^\*$ be  optimal solutions of the weighted set cover problem $(\pazocal{U}_s,\pazocal{P}_s,w_s)$ and Problem~\ref{prob:one}, respectively. Given $w_s(\Ss') \leqslant \epsilon\, w_s(\Ss^\*)$. Now we need to show that $P(\bK(\Ss')) \leqslant \epsilon\,P(\bK^\*)$.
Since $\Ss'$ and $\Ss^\*$ are feasible solutions to the weighted set cover problem, by Theorem~\ref{th:spec_1_eps}~(i),  $\bK(\Ss')$ and $\bK(\Ss^\*)$ are feasible solutions to Problem~\ref{prob:one}. By Steps~\ref{step:weight_2},~\ref{step:edge_2}~and~\ref{step:cost_2} of Algorithm~\ref{algo:five},  $w_s(\Ss') = P(\bK(\Ss'))$ and $w_s(\Ss^\*) = P(\bK(\Ss^\*))$. Hence $P(\bK(\Ss')) \leqslant \epsilon\,P(\bK(\Ss^\*))$. From Theorem~\ref{th:spec_1_eps}~(ii), $P(\bK(\Ss^\*)) = P(\bK^\*)$. Thus $P(\bK(\Ss')) \leqslant \epsilon\,P(\bK^\*)$.
 This completes the proof.
\end{proof}

\begin{theorem}\label{th:spec_1}
Consider a structured system $(\bA,\bB=\mathbb{I}_m,\bC=\mathbb{I}_p)$ and feedback cost matrix $P$ such that Assumption~\ref{assume:spcase1} holds. Then,
\begin{enumerate}
\item[(i)] There exists an algorithm that approximates  Problem~\ref{prob:one} to factor $\mbox{log\,}n$, where $n$ is the system dimension.
\item[(ii)] Further, the log$\,n$ approximation ratio is optimal.
\end{enumerate}
\end{theorem}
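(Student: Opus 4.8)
The plan is to establish part~(i) by composing the reduction of Algorithm~\ref{algo:five} with the classical greedy approximation algorithm for the weighted set cover problem, and to obtain part~(ii) directly from the inapproximability already recorded in Corollary~\ref{cor:spcase1}.

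For part~(i), first I would invoke Algorithm~\ref{algo:five} to transform the given instance of Problem~\ref{prob:one} (which satisfies Assumption~\ref{assume:spcase1}) into an instance $(\U_s,\P_s,w_s)$ of the weighted set cover problem. The crucial structural observation is that the universe is $\U_s=\{x_1,\dots,x_n\}$ (Step~\ref{step:Univ_2}), so $|\U_s|=n$ equals the system dimension. Next I would run the standard greedy algorithm for weighted set cover \cite{Chv:79}, which returns a cover $\Ss'$ whose weight satisfies $w_s(\Ss')\leqslant H_{|\U_s|}\, w_s(\Ss^\*)$, where $H_{|\U_s|}=\sum_{i=1}^{|\U_s|}1/i = O(\log n)$ is the harmonic number and $\Ss^\*$ is an optimal cover. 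Thus $\Ss'$ is an $\epsilon$-optimal solution with $\epsilon=H_n=O(\log n)$. Applying Theorem~\ref{th:spec_1_eps}~(iii), the feedback matrix $\bK(\Ss')$ obtained in Step~\ref{step:edge_2} is then an $\epsilon$-optimal solution to Problem~\ref{prob:one}, i.e., $P(\bK(\Ss'))\leqslant (\log n)\,P(\bK^\*)$. Finally I would verify that the entire pipeline runs in polynomial time: the number of sets is $|\P_s|=|E_{K^P}|=O(mp)=O(n^2)$, each set $\Ss_d$ is computed by adding a single feedback edge to $\D(\bA,\bB,\bC)$ and finding SCCs in $O(n^2)$ time (Section~\ref{sec:prelim}), and the greedy set cover routine is polynomial in $|\U_s|$ and $|\P_s|$.

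For part~(ii), I would argue that the $\log n$ ratio cannot be improved. By Corollary~\ref{cor:spcase1}~(ii), Problem~\ref{prob:one} restricted to back-edge feedback structures is inapproximable to a multiplicative factor of $\log n$ (inherited from the bound $(1-o(1))\,\log N$ for weighted set cover \cite{Fei:98}). Hence no polynomial-time algorithm can attain an approximation ratio asymptotically better than $\log n$ unless P$=$NP, so the guarantee in part~(i) is tight and the ratio is optimal.

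The main obstacle is bookkeeping the approximation factor across the reduction so that the set-cover guarantee, which is naturally stated as $H_{|\U_s|}$ in the universe size, translates cleanly into a $\log n$ factor in the system dimension; this is precisely why Step~\ref{step:Univ_2} is engineered to set $|\U_s|=n$. A secondary point to check is that Theorem~\ref{th:spec_1_eps}~(iii) is applicable with the particular $\epsilon=H_n$ produced by the greedy scheme, which holds since $H_n\geqslant 1$ for every $n\geqslant 1$.
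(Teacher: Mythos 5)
Your proposal is correct and matches the paper's own argument: both parts rest on composing the reduction of Algorithm~\ref{algo:five} with the greedy weighted set cover algorithm of \cite{Chv:79}, transferring the $\log n$ guarantee via Theorem~\ref{th:spec_1_eps}~(iii), and invoking the $\log n$ inapproximability (Theorem~\ref{theorem:two}/Corollary~\ref{cor:spcase1}) for tightness. The extra bookkeeping you supply (the harmonic-number bound, the $H_n\geqslant 1$ check, and the polynomial-time accounting) is consistent with, and slightly more explicit than, the paper's proof.
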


\begin{proof}
\noindent\textbf{(i)}: Using Algorithm~\ref{algo:five}, any general instance of Problem~\ref{prob:one} satisfying Assumption~\ref{assume:spcase1} can be reduced to an instance of  the weighted set cover problem. Notice that Algorithm~\ref{algo:five} iterates over all the feasible feedback edges and each iteration has $O(n)$ complexity. Since $m=O(n)$ and $p=O(n)$, number of feedback edges in the system are $O(n^2)$. The remaining steps of Algorithm~\ref{algo:five} are of linear complexity. Hence the complexity of Algorithm~\ref{algo:five} is $O(n^3)$. This concludes that the reduction given in Algorithm~\ref{algo:five} is a polynomial time reduction. From Theorem~\ref{th:spec_1_eps}~(iii), an $\epsilon$-optimal solution to the weighted set cover problem gives an $\epsilon$-optimal solution to Problem~\ref{prob:one}. For solving weighted set cover problem there exists a polynomial time greedy algorithm which gives a $(\log\,N)$-optimal solution, where $N$ denotes the cardinality of the universe \cite{Chv:79}. Thus Problem~\ref{prob:one} is approximable to factor $\log\,n$, using Algorithm~\ref{algo:five} and the greedy algorithm given in \cite{Chv:79}, in polynomial time.  

\noindent\textbf{(ii)}: For a structured system satisfying Assumption~\ref{assume:spcase1}, Problem~\ref{prob:one} is inapproximable to \underline{multiplicative factor} of $\log\,n$ (Theorem~\ref{theorem:two}). Theorem~\ref{th:spec_1}~(i) proves that one can find $(\log\,n)$-optimal solution to Problem~\ref{prob:one}. Thus, the above approximation bound is optimal bound for Problem~\ref{prob:one}.
\end{proof}
We explain Algorithm~\ref{algo:five} using an illustrative example below.
\subsubsection*{Illustrative example for structured systems with back-edge feedback structure}
\begin{figure}[t]
\begin{minipage}{.22\textwidth}
\begin{tikzpicture}[scale=0.40, ->,>=stealth',shorten >=1pt,auto,node distance=1.65cm, main node/.style={circle,draw,font=\scriptsize\bfseries}]
\definecolor{myblue}{RGB}{80,80,160}
\definecolor{almond}{rgb}{0.94, 0.87, 0.8}
\definecolor{bubblegum}{rgb}{0.99, 0.76, 0.8}
\definecolor{columbiablue}{rgb}{0.61, 0.87, 1.0}


	\fill[almond] (0,0) circle (10.0 pt);  
    \node at (0,0) {\scriptsize $x_1$};
	\fill[almond] (0,-3) circle (10.0 pt);  
    \node at (0,-3) {\scriptsize $x_2$};
    \fill[almond] (2,-1.5) circle (10.0 pt);  
    \node at (2,-1.5) {\scriptsize $x_4$};
    \fill[almond] (-2,-1.5) circle (10.0 pt);  
    \node at (-2,-1.5) {\scriptsize $x_3$};
    \fill[almond] (4,-1.5) circle (10.0 pt);  
    \node at (4,-1.5) {\scriptsize $x_5$};
    
    \fill[bubblegum] (-1.5,1) circle (9.0 pt);  
    \node at (-1.5,1) {\scriptsize $u_1$};
	\fill[columbiablue] (1.5,1) circle (9.0 pt);  
    \node at (1.5,1) {\scriptsize $y_1$};
    
    \fill[bubblegum] (1.5,-4) circle (9.0 pt);      
    \node at (1.5,-4) {\scriptsize $u_2$};
    \fill[columbiablue] (-1.5,-4) circle (9.0 pt);  
    \node at (-1.5,-4) {\scriptsize $y_2$};
    
    \fill[bubblegum] (-3,0) circle (9.0 pt);  
    \node at (-3,0) {\scriptsize $u_3$};
    \fill[columbiablue] (-3,-3) circle (9.0 pt);  
    \node at (-3,-3) {\scriptsize $y_3$};
    
	\fill[bubblegum] (3,0) circle (9.0 pt);  
    \node at (3,0) {\scriptsize $u_4$};
    \fill[columbiablue] (3,-3) circle (9.0 pt);  
    \node at (3,-3) {\scriptsize $y_4$};
    
    \fill[bubblegum] (5,0) circle (9.0 pt);  
    \node at (5,0) {\scriptsize $u_5$};
    \fill[columbiablue] (5,-3) circle (9.0 pt);  
    \node at (5,-3) {\scriptsize $y_5$};
    
    \draw (0,-0.3)->(-2,-1.2);
    \draw (0,-0.3)->(2,-1.2);
    
    \draw (-2,-1.8)->(0,-2.7);
    \draw (2,-1.8)->(0,-2.7);
    
    \draw (2.3,-1.5)->(3.7,-1.5);
    
    \draw (-1.2,1)->(-0.3,0);
    \draw (0.3,0)->(1.2,1);
    
    \draw (-1.2,-4)->(-0.3,-3);
    \draw (0.3,-3)->(1.2,-4);
    
    \draw (-3,-0.3)->(-2,-1.2);
    \draw (-2,-1.8)->(-3,-2.7);
    \draw (3,-0.3)->(2,-1.2);
    \draw (2,-1.8)->(3,-2.7);
    \draw (5,-0.3)->(4,-1.2);
    \draw (4,-1.8)->(5,-2.7);     
  
;
\end{tikzpicture}
\end{minipage}\hspace{0.1 cm}
\begin{minipage}{.2\textwidth}
\caption{\small Illustrative figure of a structured system with dedicated inputs and outputs to demonstrate Algorithm~\ref{algo:five}.}
\label{fig:example1}
\end{minipage}
\vspace*{-5 mm}
\end{figure}
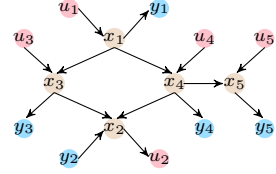

In this section, we describe Algorithm~\ref{algo:five} using the example given in Figure~\ref{fig:example1}. Let the feedback cost matrix $P$ associated with the structured system given in Figure~\ref{fig:example1} be
{\scalefont{0.9}{
$$ P=\left[
\begin{smallmatrix}
1      & 10     & 10     & 2      & 10  \\
\infty & 3      & \infty & \infty & \infty \\
\infty & 10     & 4      & \infty & \infty  \\
\infty & 10     & \infty & 2      & 8  \\
\infty & \infty & \infty & \infty & 5 \\ 
\end{smallmatrix}
\right].
$$
}}
Notice that an output $y_j$ can be given as feedback to an input $u_i$ if there exists a directed path from $u_i$ to $y_j$ in $\D(\bA,\bB,\bC)$. We reduce this instance of Problem~\ref{prob:one} to an instance of the weighted set cover problem  $(\U_s,\P_s,w_s)$ as follows. Here, the universe {\scalefont{0.9}{$\U_s=\{x_1,\ldots,x_5\}$.}} As per $P$, there are 12 feasible feedback edges. Corresponding to these edges, the sets of the weighted set cover problem {\scalefont{0.9}{$\P_s=\{\Ss_1,\ldots,\Ss_{12}\}$}} are constructed as follows: {\scalefont{0.9}{ $\Ss_1=\{x_1\}$, $\Ss_2=\{x_1,x_2,x_3,x_4\}$, $\Ss_3=\{x_1,x_3\}$, $\Ss_4=\{x_1,x_4\}$,
 $\Ss_5=\{x_1,x_4,x_5\}$,
 $\Ss_6=\{x_2\}$,
 $\Ss_7=\{x_2,x_3\}$,
 $\Ss_8=\{x_3\}$,
 $\Ss_9=\{x_2,x_4\}$,
 $\Ss_{10}=\{x_4\}$,
 $\Ss_{11}=\{x_4,x_5\}$, and 
 $\Ss_{12}=\{x_5\}$.}} The respective weights for the sets defined by matrix $P$ given above are given by, {\scalefont{0.9}{$w_s=\{1,10,10,2,10,3,10,4,10,2,8,5\}$.}} Solving weighted set cover problem for $(\U_s,\P_s,w_s)$ given above using approximation algorithm  given in \cite{Chv:79} gives a $(\log n)$-optimal solution to Problem~\ref{prob:one} (Theorem~\ref{th:spec_1}). The next section discusses the second graph topology.
\vspace*{-3 mm}
\subsection{Hierarchical Network}\label{subsec:hierar}
In this subsection, we consider a special graph topology referred as {\em layered graphs} in the literature \cite{LiuYanSlo:12}. Many real-world systems such as power grids, drinking water networks, biological cell regulation networks, online social networks, and road traffic control  can be described and modeled using a layered network structure where the states in the system interact with each other in a layered fashion \cite{FegPer:77}. Each layer in the layered structure is influenced\footnote{In a directed graph a node $v_i$ is said to be influenced by node $v_j$, if there exists a directed path from $v_j$ to $v_i$.} by the nodes in the previous layer and hence the network follows a directed tree structure called as {\em arborescence}. A directed graph following a tree structure such that every node except the root node has exactly one incoming edge is referred as a {\em hierarchical network}. Here, we aim to solve the minimum cost feedback selection problem for dedicated i/o satisfying Assumption~\ref{assume:spcase1} for structured systems whose DAG of SCCs  is a hierarchical network. 

Hierarchical network structure is common in real-life networks \cite{LiuYanSlo:12}. A power distribution system follows a hierarchical network structure and finding an optimal control strategy aims towards designing a least cost feedback pattern to  maintain the system parameters such as voltages and frequency at different layers of the network at specified levels \cite{FegPer:77}, \cite{Mar:07}. In a water distribution network, optimization techniques in controlling the network contribute towards developing a smart management strategy for implementing drinking water networks \cite{MarBarPui:12}. In case of road traffic control, a hierarchical network is a natural choice to structure the control problems \cite{VraSchSan:09}. Next, we discuss few notations and constructions required to describe a hierarchical network. 

\begin{defn}\label{defn:parent}
Consider a directed graph $\D_G:=(V_G,E_G)$. Let nodes $v_i,v_j \in V_G$ be such that there exists an edge $(v_i,v_j) \in E_G$ from $v_i$ to $v_j$. Then, $v_i$ is referred as a \underline{parent} of $v_j$.
\end{defn}
Let the DAG of SCCs in $\D(\bA)$ be denoted by $\D_{A}:=(V_{A},E_{A})$. Here the node set $V_A = \{\mN_1, \ldots, \mN_\ell\}$ is the set of all SCCs  in $\D(\bA)$ and $(\mN_i, \mN_j) \in E_A$ if there exists a directed edge in $\D(\bA)$ from a state in $\mN_i$ to a state in $\mN_j$. Then we have the following assumption on the digraph $\D_{A}$.     

\begin{assume}\label{assume:spcase2}
Consider the DAG $\D_A=(V_A,E_A)$ which consists of SCCs in $\D(\bA)$. Then, each node $\mN_i\in V_{A}$ except the root node has a unique parent, where root node is a vertex which has no incoming edge.
\end{assume}
Under Assumption~\ref{assume:spcase2}, the DAG $\D_{A}$ is a hierarchical network. For a hierarchical network, we define the notion of {\em layer} which corresponds to the position of a set of nodes in the network arrangement.
\begin{defn}\label{def:layer}
Consider  $\mN_i,\mN_j \in V_{A}$ such that there exists a directed path from $\mN_i$ to $\mN_j$ in $\D_A$. The distance from $\mN_i$ to $\mN_j$ in $\D_A$ is the number of edges in the shortest directed path from $\mN_i$ to $\mN_j$. Then, a layer $L_i$ is defined as the set of all nodes which are at a distance $i-1$ from the root node in $\D_A$. Note that $L_i\subseteq \mN$. The node set $L_i$ is represented as $L_i=\{\mN^i_1,\ldots,\mN^i_{h_i}\}$, where a node $\mN^i_j \in V_A$ denotes the $j^{\rm th}$ node in $L_i$ and $h_i$ denotes the number of nodes in $L_i$. 
\end{defn} 
An illustrative example of a hierarchical network is presented in Figure~\ref{fig:hierar}.
\begin{figure}[t]
\begin{minipage}{.25\textwidth}
\begin{tikzpicture}[scale=0.42, ->,>=stealth',shorten >=1pt,auto,node distance=1.65cm, main node/.style={circle,draw,font=\scriptsize\bfseries}]
\definecolor{myblue}{RGB}{80,80,160}
\definecolor{almond}{rgb}{0.94, 0.87, 0.8}
\definecolor{bubblegum}{rgb}{0.99, 0.76, 0.8}
\definecolor{columbiablue}{rgb}{0.61, 0.87, 1.0}

\draw[blue] (-5.5,-0.5) rectangle (5,0.5);\draw[red] (5.5,0) -> (5,0);\node at (5.8,0) {\scriptsize $L_1$};
\draw[blue] (-5.5,-2.5) rectangle (5,-1.5);\draw[red] (5.5,-2) -> (5,-2);\node at (5.8,-2) {\scriptsize $L_2$};
\draw[blue] (-5.5,-4.5) rectangle (5,-3.5);\draw[red] (5.5,-4) -> (5,-4);\node at (5.8,-4) {\scriptsize $L_3$};
\draw[blue] (-5.5,-6.5) rectangle (5,-5.5);\draw[red] (5.5,-6) -> (5,-6);\node at (5.8,-6) {\scriptsize $L_4$};
\draw[blue] (-5.5,-8.5) rectangle (5,-7.5);\draw[red] (5.5,-8) -> (5,-8);\node at (5.8,-8) {\scriptsize $L_5$};

\fill[almond] (0,0) circle (15.0 pt);  
  \node at (0,0) {\scriptsize $\mN^1_1$};

  \fill[almond] (-4,-2) circle (15.0 pt);
  \fill[almond] (0,-2) circle (15.0 pt);
  \fill[almond] (4,-2) circle (15.0 pt);
  
  \node at (-4,-2) {\scriptsize $\mN^2_1$};  
  \node at (0,-2) {\scriptsize $\mN^2_2$};  
  \node at (4,-2) {\scriptsize $\mN^2_3$};
   
  \fill[almond] (-4.8,-4) circle (15.0 pt);
  \fill[almond] (-3.2,-4) circle (15.0 pt);
   \node at (-4.8,-4) {\scriptsize $\mN^3_1$};
   \node at (-3.2,-4) {\scriptsize $\mN^3_2$};
   
   \fill[almond] (-0.8,-4) circle (15.0 pt);
   \fill[almond] (0.8,-4) circle (15.0 pt);
   \node at (-0.8,-4) {\scriptsize $\mN^3_3$};
   \node at (0.8,-4) {\scriptsize $\mN^3_4$};

  \fill[almond] (4,-4) circle (15.0 pt);
   \node at (4,-4) {\scriptsize $\mN^3_5$}; 
   \fill[almond] (0.8,-6) circle (15.0 pt);
   \node at (0.8,-6) {\scriptsize $\mN^4_4$};
   
   \fill[almond] (-3.2,-6) circle (15.0 pt);
   \fill[almond] (-4.5,-6) circle (15.0 pt);
   \fill[almond] (-1.9,-6) circle (15.0 pt);
  \node at (-3.2,-6) {\scriptsize $\mN^4_2$};
  \node at (-4.5,-6) {\scriptsize $\mN^4_1$};  
  \node at (-1.9,-6) {\scriptsize $\mN^4_3$};
  
  \fill[almond] (4,-6) circle (15.0 pt);
  \node at (4,-6) {\scriptsize $\mN^4_5$};

   \fill[almond] (0.1,-8) circle (15.0 pt);
   \fill[almond] (1.7,-8) circle (15.0 pt);
   \node at (0.1,-8) {\scriptsize $\mN^5_3$};
   \node at (1.7,-8) {\scriptsize $\mN^5_4$};
   
   \fill[almond] (-4.0,-8) circle (15.0 pt);
   \fill[almond] (-2.4,-8) circle (15.0 pt);
   \node at (-4.0,-8) {\scriptsize $\mN^5_1$};
   \node at (-2.4,-8) {\scriptsize $\mN^5_2$};

	\draw (0,-0.5)  ->   (0,-1.5);
	\draw (0,-0.5)  ->   (-4,-1.5);
	\draw (0,-0.5)  ->   (4,-1.5);
	
	\draw (-4,-2.5)  ->   (-4.8,-3.5);
	\draw (-4,-2.5)  ->   (-3.2,-3.5);
	
	\draw (0,-2.5)  ->   (-0.8,-3.5);
	\draw (0,-2.5)  ->   (0.8,-3.5);
	
	\draw (4,-2.5)  ->   (4,-3.5);
	
	\draw (0.8,-4.5)  ->   (0.8,-5.5);
	
	\draw (0.8,-6.5)  ->   (0.1,-7.5);
	\draw (0.8,-6.5)  ->   (1.7,-7.5);
	
	\draw (-3.2,-4.5) -> (-3.2,-5.5);
	\draw (-3.2,-4.5) -> (-4.4,-5.5);
	\draw (-3.2,-4.5) -> (-2.0,-5.5);
	
	\draw (4,-4.5) -> (4,-5.5);
	
	\draw (-3.2,-6.5) -> (-4.0,-7.5);
	\draw (-3.2,-6.5) -> (-2.4,-7.5);
	
	\draw [dashed,red] (-1.4,-1.2) rectangle (2.3,-8.8);
  
;

\end{tikzpicture}
\end{minipage}~\hspace*{7 mm}
\begin{minipage}{.18\textwidth}
\caption{\small A structured system whose DAG of SCCs forms a hierarchical network. Each vertex $\mN_i^j$ in the figure corresponds to an SCC of $\D(\bA)$. The subgraph enclosed in the dashed box illustrates a subtree rooted at node $\mN^2_2$ denoted by $Tree(\mN^2_2)$.}
\label{fig:hierar}
\end{minipage}
\vspace*{-4 mm}
\end{figure}
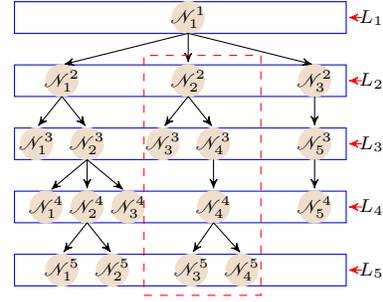
Under Definition~\ref{def:layer}, the root node of the hierarchical network is denoted by $\mN^1_1$ and it is the only node present in the top layer. Next, we define a {\em subtree} of a hierarchical network which is a subgraph of the system digraph. For $\D_A=(V_A,E_A)$, $\D_S:=(V_S,E_S)$  denotes a subgraph of $\D_A$ whose vertex set $V_S \subseteq V_A$ and edge set $E_S \subseteq E_A$, such that  endpoints of $E_S$ are nodes from $V_S$.

\begin{defn}\label{def:tree}
Consider a node $\mN^f_k\in V_A$ in the layer $L_f$. Then, a \underline{subtree} rooted at node $\mN^f_k$, denoted by $Tree(\mN^f_k)$, is defined as the subgraph in the hierarchical network which consists of the node $\mN^f_k$ and all of it's descendants.
\end{defn}
Note that $Tree(\mN^1_1)$ denotes the entire hierarchical network, where $\mN^1_1$ is the top node in the network. An illustrative example of a subtree is shown enclosed in the dashed box with respect to the hierarchical network in Figure~\ref{fig:hierar}. In this paper, we propose a dynamic programming based algorithm to solve the minimum cost feedback selection problem for dedicated i/o when the structured system is a hierarchical network. The approach is based on dividing the network into smaller subtrees (Definition~\ref{def:tree}) and finding an optimal solution for the subtrees in a bottom up fashion. Eventually we merge the solutions obtained for the smaller subtrees and find an optimal solution to the bigger network.


Consider a hierarchical network $\D_{A}$. Our aim is to find a set of minimum cost feedback edges such that the hierarchical network along with these feedback edges satisfies condition~(a) in Proposition~\ref{prop:one}. Consider a node $\mN^f_k$, where $\mN^f_k$ denotes the $k^{\rm th}$ node in layer $L_f$. Recall that $\mN^f_k$ lies in $\D_A$ which is a DAG. For $\mN^f_k$, let $A^f_k$ denotes the set of all feedback edges such that each edge in $A^f_k$ makes $\mN^f_k$ lie in a cycle. For a feedback edge $(y_b,u_a)$ to be in $A^f_k$, $(y_b,u_a)$ has to be directed from an output $y_b$ which is a descendant of $\mN^f_k$ to an input $u_a$ which is an ancestor of $\mN^f_k$. To characterize all the edges in $A^f_k$, we give the following definition.
 
\begin{defn}
Consider $\D_A$ and $\mN^f_k \in V_{A}$. The set ${\cal{A}}^f_k$ denotes the set of all state nodes that lie in the SCCs of $\D(\bA)$ which are ancestors of  $\mN^f_k$. Similarly, the set ${\cal{D}}^f_k$ denotes the set of all state nodes which lie in some SCC of $\D(\bA)$ which are descendants of $\mN^f_k$. We denote $U^f_k$ as the set of input nodes $u_i$'s which are connected to the state nodes in ${\cal{A}}^f_k$. Similarly, $Y^f_k$ denotes  the set of output nodes $y_j$'s which are connected from the state nodes in ${\cal{D}}^f_k$. Then, with respect to $\mN^f_k$, a feedback edge $(y_j,u_i)$ belongs to the edge set $A^f_k$  if $y_j \in Y^f_k$ and $u_i\in U^f_k$. A feedback edge $(y_b,u_a)$ is said to \underline{cover} $\mN^f_k$ if $(y_b,u_a)\in A^f_k$.
\end{defn}

We need to find an optimal solution to the minimum cost feedback selection problem for hierarchical networks, i.e., we need to find a set of feedback edges which cover the entire network represented by $Tree(\mN^1_1)$. The proposed algorithm is based on dynamic programming where we find solutions to the subproblems and merge them to obtain a solution for the original problem. The subproblem is to find an optimal feedback edge set to cover a general subtree $Tree(\mN^f_k)$ in the network. Next, we describe the procedure to cover a subtree $Tree(\mN^f_k)$ optimally. Consider $Tree(\mN^f_k)$ and $(y_b,u_a) \in A^f_k$. Since $Tree(\mN^f_k)$ includes $\mN^f_k$, an edge in $A^f_k$ is essential to cover the nodes in $Tree(\mN^f_k)$. Suppose we select $(y_b,u_a)$ that covers $\mN^f_k$. Note that there might be a set of nodes in $Tree(\mN^f_k)$ other than $\mN^f_k$ which are covered by the edge $(y_b,u_a)$. We need to cover the rest of the nodes in $Tree(\mN^f_k)$ which are not covered by the edge $(y_b,u_a)$. These nodes lie in a subgraph of $Tree(\mN^f_k)$ and form a set of disjoint subtrees denoted by $Forest(\mN^f_k,(y_b,u_a))$. 
\begin{defn}\label{def:forest}
Consider node $\mN^f_k$ and a feedback edge $(y_b,u_a)\in A^f_k$. Then, $Forest(\mN^f_k,(y_b,u_a))$ is defined as the subgraph of $Tree(\mN^f_k)$ which consists of the nodes in $Tree(\mN^f_k)$ which are not covered by the feedback edge $(y_b,u_a)$. The $Forest(\mN^f_k,(y_b,u_a))$ is composed of disjoint subtrees in $Tree(\mN^f_k)$.
\end{defn}
Consider an example of forest presented in Figure~\ref{fig:forest}. With respect to the node $\mN^2_2$ and the feedback edge $(y_b,u_a)$ covering the node $\mN^2_2$, the $Forest(\mN^2_2,(y_b,u_a))$ is represented by subtrees consisting of the node set $\{\mN^3_3,\mN^4_4,\mN^5_3,\mN^5_4\}$ (highlighted in green colour). Here, there are two subtrees, namely $Tree(\mN^3_3)$ and $Tree(\mN^4_4)$, in $Forest(\mN^2_2,(y_b,u_a))$.
Consider $Forest(\mN^f_k,(y_b,u_a))$, where $(y_b,u_a)\in A^f_k$. The cost to cover the disjoint subtrees in $Forest(\mN^f_k,(y_b,u_a))$ is the sum of the cost to cover the subtrees individually (Corollary~\ref{cor:spcase2-2}) and is denoted by $c(F(\mN^f_k,(y_b,u_a)))$, where $F(\mN^f_k,(y_b,u_a))$ is an optimal set of feedback edges to cover all the individual subtrees in $Forest(\mN^f_k,(y_b,u_a))$.  
\begin{figure}[t]
\begin{minipage}{.25\textwidth}
\begin{tikzpicture}[scale=0.42, ->,>=stealth',shorten >=1pt,auto,node distance=1.65cm, main node/.style={circle,draw,font=\scriptsize\bfseries}]
\definecolor{mygreen}{RGB}{80,160,80}
\definecolor{myblue}{RGB}{80,80,160}
\definecolor{almond}{rgb}{0.94, 0.87, 0.8}
\definecolor{bubblegum}{rgb}{0.99, 0.76, 0.8}
\definecolor{columbiablue}{rgb}{0.61, 0.87, 1.0}

\draw[blue] (-5.5,-0.5) rectangle (5,0.5);\draw[red] (5.5,0) -> (5,0);\node at (5.8,0) {\scriptsize $L_1$};
\draw[blue] (-5.5,-2.5) rectangle (5,-1.5);\draw[red] (5.5,-2) -> (5,-2);\node at (5.8,-2) {\scriptsize $L_2$};
\draw[blue] (-5.5,-4.5) rectangle (5,-3.5);\draw[red] (5.5,-4) -> (5,-4);\node at (5.8,-4) {\scriptsize $L_3$};
\draw[blue] (-5.5,-6.5) rectangle (5,-5.5);\draw[red] (5.5,-6) -> (5,-6);\node at (5.8,-6) {\scriptsize $L_4$};
\draw[blue] (-5.5,-8.5) rectangle (5,-7.5);\draw[red] (5.5,-8) -> (5,-8);\node at (5.8,-8) {\scriptsize $L_5$};

\fill[almond] (0,0) circle (15.0 pt);  
  \node at (0,0) {\scriptsize $\mN^1_1$};

  \fill[almond] (-4,-2) circle (15.0 pt);
  \fill[almond] (0,-2) circle (15.0 pt);
  \fill[almond] (4,-2) circle (15.0 pt);
  
  \node at (-4,-2) {\scriptsize $\mN^2_1$};  
  \node at (0,-2) {\scriptsize $\mN^2_2$};  
  \node at (4,-2) {\scriptsize $\mN^2_3$};
   
  \fill[almond] (-4.8,-4) circle (15.0 pt);
  \fill[almond] (-3.2,-4) circle (15.0 pt);
   \node at (-4.8,-4) {\scriptsize $\mN^3_1$};
   \node at (-3.2,-4) {\scriptsize $\mN^3_2$};
   
   \fill[mygreen] (-0.8,-4) circle (15.0 pt);
   \fill[almond] (0.8,-4) circle (15.0 pt);
   \node at (-0.8,-4) {\scriptsize $\mN^3_3$};
   \node at (0.8,-4) {\scriptsize $\mN^3_4$};

  \fill[almond] (4,-4) circle (15.0 pt);
   \node at (4,-4) {\scriptsize $\mN^3_5$}; 
   \fill[mygreen] (0.8,-6) circle (15.0 pt);
   \node at (0.8,-6) {\scriptsize $\mN^4_4$};
   
   \fill[almond] (-3.2,-6) circle (15.0 pt);
   \fill[almond] (-4.5,-6) circle (15.0 pt);
   \fill[almond] (-1.9,-6) circle (15.0 pt);
  \node at (-3.2,-6) {\scriptsize $\mN^4_2$};
  \node at (-4.5,-6) {\scriptsize $\mN^4_1$};  
  \node at (-1.9,-6) {\scriptsize $\mN^4_3$};
  
  \fill[almond] (4,-6) circle (15.0 pt);
  \node at (4,-6) {\scriptsize $\mN^4_5$};

   \fill[mygreen] (0.1,-8) circle (15.0 pt);
   \fill[mygreen] (1.7,-8) circle (15.0 pt);
   \node at (0.1,-8) {\scriptsize $\mN^5_3$};
   \node at (1.7,-8) {\scriptsize $\mN^5_4$};
   
   \fill[almond] (-4.0,-8) circle (15.0 pt);
   \fill[almond] (-2.4,-8) circle (15.0 pt);
   \node at (-4.0,-8) {\scriptsize $\mN^5_1$};
   \node at (-2.4,-8) {\scriptsize $\mN^5_2$};

	\draw (0,-0.5)  ->   (0,-1.5);
	\draw (0,-0.5)  ->   (-4,-1.5);
	\draw (0,-0.5)  ->   (4,-1.5);
	
	\draw (-4,-2.5)  ->   (-4.8,-3.5);
	\draw (-4,-2.5)  ->   (-3.2,-3.5);
	
	\draw (0,-2.5)  ->   (-0.8,-3.5);
	\draw (0,-2.5)  ->   (0.8,-3.5);
	
	\draw (4,-2.5)  ->   (4,-3.5);
	
	\draw (0.8,-4.5)  ->   (0.8,-5.5);
	
	\draw[mygreen, thick] (0.8,-6.5)  ->   (0.1,-7.5);
	\draw[mygreen, thick] (0.8,-6.5)  ->   (1.7,-7.5);
	
	\draw (-3.2,-4.5) -> (-3.2,-5.5);
	\draw (-3.2,-4.5) -> (-4.4,-5.5);
	\draw (-3.2,-4.5) -> (-2.0,-5.5);
	
	\draw (4,-4.5) -> (4,-5.5);
	
	\draw (-3.2,-6.5) -> (-4.0,-7.5);
	\draw (-3.2,-6.5) -> (-2.4,-7.5);
	
	\fill[bubblegum] (0,1.5) circle (10.0 pt);
  \node at (0,1.5) {\scriptsize $u_a$};
  \fill[columbiablue] (2,-5) circle (10.0 pt);
  \node at (2,-5) {\scriptsize $y_b$};
	
	\draw (0,1.2)->(0,0.4);
	\draw (1.3,-4.2)->(2,-4.7);
  
;
\path[every node/.style={font=\sffamily\small}]
(2.3,-5)  edge[red, bend right = 50] node [left] {} (0.3,1.5);
\end{tikzpicture}
\end{minipage}~\hspace*{7 mm}
\begin{minipage}{.18\textwidth}
\caption{\small Illustrative figure demonstrating forest corresponding to a node and a feedback edge in the hierarchical network given in Figure~\ref{fig:hierar}. Figure shows $Forest(\mN^2_2,(y_b,u_a))$  whose node set is  $\{\mN^3_3,\mN^4_4,\mN^5_3,\mN^5_4\}$.}
\label{fig:forest}
\end{minipage}
\vspace*{-2 mm}
\end{figure}
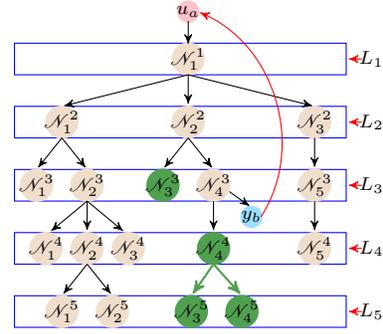
Next, we give a dynamic programming algorithm to find an optimal solution to Problem~1 under Assumptions~\ref{assume:spcase1} and~\ref{assume:spcase2}.
\begin{algorithm}
\caption{Pseudo-code to solve Problem~\ref{prob:one} for structured systems satisfying Assumptions~\ref{assume:spcase1} and~\ref{assume:spcase2}}\label{algo:hierarchical}
\textbf{Input}: Structured system $(\bA,\bB=\mathbb{I}_m,\bC=\mathbb{I}_p)$ and cost matrix $P$ satisfying Assumptions~\ref{assume:spcase1} and~\ref{assume:spcase2}\\
\textbf{Output}: Set of optimal feedback edges $H_{opt}$
\begin{algorithmic}[1]
\State Find SCCs in $\D(\bA)$, $\mN=\{\mN_1,\ldots,\mN_\ell\}$
\State Define set $L_f \leftarrow$ nodes in $\D_A$ which are at distance $f-1$ from the root node\label{step:Lf}
\State Define $\mN^f_k\leftarrow$ $k^{\rm th}$ node in layer $L_f$\label{step:SCC} 
\State Define $U^f_k \leftarrow \{u_i: \bB_{ri}=\*$ and $x_r\in {\cal{A}}^f_k\}$\label{step:U}
\State Define $Y^f_k \leftarrow \{y_j: \bC_{jr}=\*$ and $x_r\in {\cal{D}}^f_k\}$\label{step:Y}
\For {$f = \{\Delta,\dots,1\}$}\label{step:loop1}
\For {$k \in \{1,\dots,|L_f|\}$}\label{step:loop2}
\State $F(\mN^f_k,(y_j,u_i)) \leftarrow$ minimum cost edge set to keep the nodes in $Forest(\mN^f_k,(y_j,u_i))$ in cycles\label{step:W}
\State $c(F(\mN^f_k,(y_j,u_i))) \leftarrow$ cost of the edge set $F(\mN^f_k,(y_j,u_i))$\label{step:cW}
\State $A^f_k \leftarrow \{(y_j,u_i): y_j \in Y^f_k$ and $u_i\in U^f_k\}$\label{step:A1}
\State $c(Z(\mN^f_k))\leftarrow \min\limits_{(y_j,u_i)\in A^f_k} \{P_{ij}{+} c(F(\mN^f_k,(y_j,u_i)))\}$\label{step:cZ}
\State If $c(Z(\mN^f_k)) = P_{ab} + c(F(\mN^f_k,(y_b,u_a)))$, then $Z(\mN^f_k)\leftarrow (y_b,u_a) \cup F(\mN^f_k,(y_b,u_a))$, where $a \in\{1,\ldots,m\},b\in \{1,\ldots,p\}$\label{step:Z}
\EndFor
\EndFor
\State $H_{opt} = Z(\mN^1_1) $\label{step:Hopt}
\Return $H_{opt}$ and $c(Z(\mN^1_1)$
\end{algorithmic}
\end{algorithm}
The pseudo-code to find an optimal solution to Problem~\ref{prob:one} for hierarchical networks satisfying Assumption~\ref{assume:spcase1} is presented in Algorithm~\ref{algo:hierarchical}. 
Here $L_f$ (Step~\ref{step:Lf}) denotes the $f^{\rm th}$ layer in the network and $\mN^f_k$ (Step~\ref{step:SCC}) denotes the $k^{\rm th}$ node in layer $L_f$. We denote $U^f_k$ (Step~\ref{step:U}) as the set of input nodes from which there exists a directed path to the states in SCC $\mN^f_k$, and we denote $Y^f_k$ (Step~\ref{step:Y}) as the set of output nodes which have a directed path from the states in the SCC $\mN^f_k$. The algorithm iterates over two nested \textbf{for}-loops, where the first loop (Step~\ref{step:loop1}) iterates over the layers in the network and the second loop (Step~\ref{step:loop2}) iterates over the nodes in a particular layer. We start with the bottom most layer $L_\Delta$ and find the optimal cost to cover each node in layer $L_\Delta$. At layer $L_f$,  consider a particular node $\mN^f_k$. For an edge $(y_j,u_i)\in A^f_k$ (Step~\ref{step:A1}), the algorithm finds the cost to cover $Tree(\mN^f_k)$ using $(y_j,u_i)$ (Step~\ref{step:cZ}). The cost is computed as the sum of the cost of the feedback edge $(y_j,u_i)\in A^f_k$ and the cost of the edge set to cover the $Forest(\mN^f_k,(y_j,u_i))$. The feedback edge set $F(\mN^f_k,(y_j,u_i))$ denotes an optimal feedback edge set to cover $Forest(\mN^f_k,(y_j,u_i))$ (Step~\ref{step:W}) and $c(F(\mN^f_k,(y_j,u_i)))$ denotes the corresponding cost of the edge set $F(\mN^f_k,(y_j,u_i))$ (Step~\ref{step:cW}). The cost to cover $Forest(\mN^f_k,(y_j,u_i))$ is already found as the subtrees in $Forest(\mN^f_k,(y_j,u_i))$ are rooted at some descendants of the node $\mN^f_k$ and the costs to cover these subtrees individually are already computed. Next, we perform a minimization over all the feedback edges present in $A^f_k$ and select the feedback edge $(y_b,u_a)$ which results in the minimum cost to cover $Tree(\mN^f_k)$. The set of feedback edges to cover $Tree(\mN^f_k)$ is then obtained by taking the union of the optimal edge $(y_b,u_a)$ and an optimal edge set to cover the $Forest(\mN^f_k,(y_b,u_a))$ (Step~\ref{step:Z}). Eventually, the algorithm reaches the top most layer where we find the optimal cost to cover $Tree(\mN^1_1)$ (Step~\ref{step:Hopt}), which is in fact the cost to cover the entire hierarchical network.    
Next, we give the main result regarding the optimality of Algorithm~\ref{algo:hierarchical}.
\begin{theorem}\label{theorem:spcase2}
Consider a structured system $(\bA,\bB={\mathbb{I}_m},\bC={\mathbb{I}_p})$ and a feedback cost matrix $P$ satisfying Assumptions~\ref{assume:spcase1} and~\ref{assume:spcase2}.  Let $\B(\bA)$ has a perfect matching. Then, output of Algorithm~\ref{algo:hierarchical} is an optimal solution to Problem~\ref{prob:one}.
\end{theorem}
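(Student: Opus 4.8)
The plan is to prove correctness of the dynamic program by establishing \emph{optimal substructure} and then arguing by induction on the layers of the hierarchical network, processed bottom-up exactly as in Algorithm~\ref{algo:hierarchical}. Since $\B(\bA)$ has a perfect matching, condition~(b) of Proposition~\ref{prop:one} holds without any feedback edge, so by the reformulation of Theorem~\ref{theorem:three} it suffices to find a minimum cost feedback edge set under which every SCC node lies in a cycle, i.e. is covered. Accordingly I define $\mathrm{OPT}(\mN^f_k)$ as the minimum cost of a feedback edge set that covers every node of $Tree(\mN^f_k)$, and the goal reduces to showing $c(Z(\mN^f_k)) = \mathrm{OPT}(\mN^f_k)$ for every node, so that in particular $c(Z(\mN^1_1))$ is optimal for the whole network.

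The key structural lemma I would prove first is that, under Assumptions~\ref{assume:spcase1} and~\ref{assume:spcase2}, a single feedback edge $(y_b,u_a)$ covers \emph{exactly} the SCC nodes lying on the unique directed path of $\D_A$ from the SCC fed by $u_a$ down to the SCC sensed by $y_b$ --- a vertical path in the arborescence (this matches the set constructed in Step~\ref{step:Set_2}). The essential consequence, which relies crucially on the unique-parent property of Assumption~\ref{assume:spcase2}, is that such a vertical path can meet at most one of the disjoint subtrees of $Forest(\mN^f_k,(y_b,u_a))$: to enter two sibling subtrees a path would have to contain two SCC nodes neither of which is an ancestor of the other, which a vertical path cannot. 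Hence no single feedback edge can help cover nodes in two different forest subtrees, which is precisely what makes the additive decomposition of Corollary~\ref{cor:spcase2-2} valid.

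With this lemma in hand I would carry out the induction. For the base case, a leaf node $\mN^f_k$ in the bottom layer has empty forest, and its only covers are single edges of $A^f_k$, so $c(Z(\mN^f_k))=\min_{(y_j,u_i)\in A^f_k}P_{ij}=\mathrm{OPT}(\mN^f_k)$. For the inductive step assume $c(Z(\mN'))=\mathrm{OPT}(\mN')$ for every proper descendant $\mN'$ (all lying in deeper, already-processed layers). Feasibility gives $c(Z(\mN^f_k))\geqslant \mathrm{OPT}(\mN^f_k)$, since $\{(y_b,u_a)\}\cup F(\mN^f_k,(y_b,u_a))$ covers $\mN^f_k$ together with every forest subtree and hence all of $Tree(\mN^f_k)$. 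For the reverse inequality I would take any optimal cover $S^\*$ of $Tree(\mN^f_k)$, pick an edge $e^\*=(y_{b^\*},u_{a^\*})\in S^\*\cap A^f_k$ covering the root $\mN^f_k$, and observe that every node of $Forest(\mN^f_k,e^\*)$ must be covered by $S^\*\setminus\{e^\*\}$ (since $e^\*$ covers no off-path node). Using the structural lemma, each remaining edge touches nodes of at most one forest subtree, so assigning each edge to the unique subtree it meets exhibits $S^\*\setminus\{e^\*\}$ as a disjoint union of feasible covers of the individual subtrees $Tree(\mN')$; by the induction hypothesis their total cost is at least $\sum_{\mN'}\mathrm{OPT}(\mN')=c(F(\mN^f_k,e^\*))$. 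This yields $\mathrm{OPT}(\mN^f_k)=P_{a^\*b^\*}+c(S^\*\setminus\{e^\*\})\geqslant P_{a^\*b^\*}+c(F(\mN^f_k,e^\*))\geqslant c(Z(\mN^f_k))$, completing the step.

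I expect the main obstacle to be exactly this disjointness argument in the lower bound: rigorously showing that the edges of an arbitrary optimal solution partition cleanly among the forest subtrees without any single edge being charged to two subtrees. This is the point where Assumption~\ref{assume:spcase2} is indispensable, and it also requires care that a subtree cover may legitimately use edges reaching inputs strictly \emph{above} $\mN^f_k$; this causes no inconsistency because $\mathrm{OPT}(\mN')$ is defined over all feasible edges in $A^{f'}_{k'}$ for the root of that subtree, which already includes such ancestor-reaching edges. Finally, I would instantiate the established equality at $\mN^1_1$ and invoke Theorem~\ref{theorem:three} to conclude that $H_{opt}=Z(\mN^1_1)$ is an optimal solution to Problem~\ref{prob:one}.
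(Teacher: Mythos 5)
Your proposal is correct and follows essentially the same route as the paper: a bottom-up induction over the layers, with the disjointness of the cover-edge sets $A^f_i$ for incomparable SCC nodes (the paper's Lemma~\ref{lem:spcase2-1} and Corollary~\ref{cor:spcase2-2}, your ``vertical path'' lemma) supplying the additive optimal-substructure decomposition over the forest subtrees. Your lower-bound step, which explicitly decomposes an arbitrary optimal cover $S^\*$ of $Tree(\mN^f_k)$ into an edge of $A^f_k$ plus disjoint covers of the forest subtrees, is a slightly more explicit rendering of what the paper asserts via Corollary~\ref{cor:spcase2-2}, but it is the same argument.
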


To prove Theorem~\ref{theorem:spcase2}, we state and prove the following lemma. Further, we state two corollaries extending the result of Lemma~\ref{lem:spcase2-1}. Finally, we give a proof for Theorem~\ref{theorem:spcase2}.

\begin{lem}\label{lem:spcase2-1}
Consider the nodes $\mN^f_i,\mN^g_j \in V_A$ such that there does not exist a path directed from $\mN^f_i$ to $\mN^g_j$. Let the set of feedback edges which cover the nodes $\mN^f_i$ and $\mN^g_j$ be $A^f_i$ and $A^g_j$, respectively. Then $A^f_i \cap A^g_j = \emptyset$.
\end{lem}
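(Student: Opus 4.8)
The plan is to argue by contradiction, exploiting two structural facts. The first is that the inputs and outputs are dedicated ($\bB=\mathbb{I}_m$, $\bC=\mathbb{I}_p$), so each feedback edge is anchored at a unique pair of states and hence at a unique pair of SCCs of $\D(\bA)$. The second is that, under Assumption~\ref{assume:spcase2}, the DAG $\D_A$ is an arborescence, so the set of ancestors of any fixed node forms a single directed chain.

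First I would suppose, toward a contradiction, that some feedback edge $(y_b,u_a)$ lies in $A^f_i \cap A^g_j$. Because $\bC=\mathbb{I}_p$ is dedicated, the output $y_b$ senses exactly one state $x_s$, and $x_s$ lies in exactly one SCC of $\D(\bA)$; call it $\mN_D$. Membership $(y_b,u_a)\in A^f_i$ forces $y_b\in Y^f_i$, i.e. $x_s\in \mathcal{D}^f_i$, which says that $\mN_D$ is a descendant of (or equal to) $\mN^f_i$, so there is a directed path from $\mN^f_i$ to $\mN_D$ in $\D_A$. Since the \emph{same} edge lies in $A^g_j$, the same state $x_s$ and the same SCC $\mN_D$ yield a directed path from $\mN^g_j$ to $\mN_D$. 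Thus $\mN_D$ is a common descendant of $\mN^f_i$ and $\mN^g_j$. (The actuated endpoint $u_a$ would symmetrically produce a common ancestor $\mN_U$ via $\mathcal{A}^f_i$ and $\mathcal{A}^g_j$, but the common descendant alone suffices.)

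Next I would invoke Assumption~\ref{assume:spcase2}: every node other than the root has a unique parent, so the set of ancestors of the fixed node $\mN_D$ is totally ordered, being exactly the unique directed path from the root of $\D_A$ down to $\mN_D$. Both $\mN^f_i$ and $\mN^g_j$ are ancestors of $\mN_D$, so both lie on this single chain. Two distinct nodes on a directed chain are comparable, so one is an ancestor of the other and a directed path exists between $\mN^f_i$ and $\mN^g_j$. This contradicts the hypothesis that the two nodes are not joined by a directed path (read in the symmetric sense in which the lemma is applied, namely for roots of the disjoint subtrees of a $Forest(\cdot,\cdot)$, which are pairwise incomparable). Hence no such edge exists and $A^f_i \cap A^g_j = \emptyset$. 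I would also check the degenerate cases $\mN_D=\mN^f_i$ or $\mN_D=\mN^g_j$, which still place both nodes on the root-to-$\mN_D$ chain and so cause no difficulty.

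The main obstacle is the single step that upgrades a shared feedback edge into a forced ancestor relation between $\mN^f_i$ and $\mN^g_j$, and this is precisely where both hypotheses are indispensable. Dedicated i/o is what guarantees that the two coverage conditions refer to the \emph{same} SCC $\mN_D$; without it the two nodes could merely have \emph{unrelated} descendants feeding the same output, and no comparability would follow. The unique-parent property of Assumption~\ref{assume:spcase2} is what turns ``common descendant'' into ``comparable'', a step that genuinely fails in a general DAG (a diamond $\mN_U\rightsquigarrow\mN^f_i\rightsquigarrow\mN_D$, $\mN_U\rightsquigarrow\mN^g_j\rightsquigarrow\mN_D$ leaves $\mN^f_i,\mN^g_j$ incomparable), so the argument is tight in its reliance on the hierarchical structure.
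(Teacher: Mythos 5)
Your proof is correct and follows essentially the same route as the paper's: both argue by contradiction that a single shared covering edge would force $\mN^f_i$ and $\mN^g_j$ onto one directed chain of the arborescence, contradicting their incomparability. Your version is in fact slightly more explicit than the paper's---you isolate exactly where dedicated i/o (a unique sensed SCC $\mN_D$) and the unique-parent assumption (ancestors of $\mN_D$ form a chain) enter, whereas the paper simply asserts uniqueness of the $u_a$-to-$y_b$ path---and you correctly flag that the hypothesis must be read as mutual incomparability, which is how the paper's proof and its applications (same-layer nodes, roots of forest subtrees) also treat it.
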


\begin{proof}
We prove by contradiction. Let $(y_b,u_a)$ be a feedback edge such that $(y_b,u_a) \in A^f_i \cap A^g_j$. The feedback edge $(y_b,u_a)$ is directed from output node $y_b$ to an input node $u_a$ and covers the nodes $\mN^f_i$ and $\mN^g_j$. Note that, since $y_b$ and $u_a$ are dedicated inputs and outputs and they belong to a hierarchical network, there exists atmost one directed path between $u_a$ and $y_b$. Since $(y_b,u_a)$ covers $\mN^f_i$, there exists a directed path from node $u_a$ towards node $y_b$ through the node $\mN^f_i$. Similarly, there exists a path directed from node $u_a$ towards node $y_b$ through the node $\mN^g_j$. Since there exists exactly one directed path from node $u_a$ towards node $y_b$, the nodes $\mN^f_i$ and $\mN^g_j$ must lie in single path directed from node $u_a$ to node $y_b$. This is a contradiction to our assumption that the nodes $\mN^f_i$ and $\mN^g_j$ do not lie in a directed path. Hence  $A^f_i \cap A^g_j =\emptyset$. 
\end{proof}
\vspace*{-3 mm}
\begin{cor}\label{cor:spcase2-cor1}
Consider a hierarchical network corresponding to the structured system $(\bA,\bB=\mathbb{I}_m,\bC=\mathbb{I}_p)$ and the feedback cost matrix $P$. Consider the nodes $\mN^f_i$ and $\mN^f_k$ in a layer $L_f$. Let $A^f_i$ and $A^f_k$ be the set of all feedback edges which cover the nodes $\mN^f_i$ and $\mN^f_k$, respectively. Then, $A^f_i \cap A^f_k = \emptyset$. 
\end{cor}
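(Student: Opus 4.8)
The plan is to reduce the corollary to Lemma~\ref{lem:spcase2-1} by verifying that two distinct nodes in the same layer of the hierarchical network $\D_A$ admit no directed path between them in either direction.

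First I would fix distinct indices $i \neq k$ and the two nodes $\mN^f_i, \mN^f_k \in L_f$. The key structural fact I would exploit is that, under Assumption~\ref{assume:spcase2}, every non-root node of $\D_A$ has a unique parent, so $\D_A$ is an arborescence and each node is joined to the root by a \emph{unique} directed path. By Definition~\ref{def:layer}, the layer index of a node is one plus the length of this unique root-path. Hence I would argue: if there were a directed path from $\mN^f_i$ to $\mN^f_k$, then $\mN^f_k$ would be a strict descendant of $\mN^f_i$, and the unique root-path to $\mN^f_k$ would factor as the root-path to $\mN^f_i$ followed by a nonempty path from $\mN^f_i$ to $\mN^f_k$; its length would therefore strictly exceed that of the root-path to $\mN^f_i$, placing $\mN^f_k$ in a deeper layer and contradicting $\mN^f_k \in L_f$. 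Exchanging the roles of $i$ and $k$ rules out a directed path in the reverse direction as well.

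With both directions excluded, the hypothesis of Lemma~\ref{lem:spcase2-1} holds for the pair $(\mN^f_i,\mN^f_k)$, and I would simply invoke that lemma to conclude $A^f_i \cap A^f_k = \emptyset$.

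I expect the only substantive step to be the layer-distance argument in the second paragraph; after that the corollary is immediate. The point to be careful about is that the argument genuinely requires the tree structure of Assumption~\ref{assume:spcase2}: in a general DAG two nodes at equal root-distance can still be comparable (e.g.\ a root with edges to both $u$ and $v$ together with an edge $u \to v$), so it is the uniqueness of the parent, not merely equality of distance, that forces incomparability and hence the emptiness of $A^f_i \cap A^f_k$.
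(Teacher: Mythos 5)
Your proof is correct and takes essentially the same route as the paper: the paper's own proof likewise observes that two nodes in the same layer admit no directed path between them and then invokes Lemma~\ref{lem:spcase2-1}. The only difference is that you justify the incomparability step explicitly via the unique-parent (arborescence) structure of Assumption~\ref{assume:spcase2} and the layer definition, whereas the paper asserts it without elaboration.
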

\begin{proof}
Note that since the nodes $\mN^f_i$ and $\mN^f_k$ belong to the same layer, there does not exist a directed path between them. Hence the proof follows from Lemma~\ref{lem:spcase2-1}.
\end{proof}
The following corollary states that an optimal feedback edge set to cover a forest composed of disjoint subtrees is the union of the optimal edge sets to cover the subtrees individually. Moreover, these edge sets are disjoint and hence their cost is equal to the sum of the costs of edge sets to cover the subtrees individually.
\begin{cor}\label{cor:spcase2-2}
Consider nodes ${\mN^f_i},{\mN^g_j} \in V_A$, such that there does not exist a path directed from node ${\mN^f_i}$ to node ${\mN^g_j}$. Let $Z'(\mN^f_i)$ and $Z'(\mN^g_j)$ be some arbitrary maximal feedback edge sets which cover $Tree({\mN^f_i})$ and $Tree({\mN^g_j})$, respectively. Then, $Z'(\mN^f_i) \cap Z'(\mN^g_j) = \emptyset$. Also, the optimal cost to cover $Tree({\mN^f_i})$ and $Tree({\mN^g_j})$ together is equal to the sum of the cost of covering $Tree({\mN^f_i})$ and $Tree({\mN^g_j})$ optimally, i.e., $c(Z(\mN^f_i)) + c(Z(\mN^g_j))$.
\end{cor}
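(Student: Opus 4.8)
The plan is to derive both claims from Lemma~\ref{lem:spcase2-1} together with the arborescence structure of $\D_A$. The first thing I would record is that, exactly as in the proof of Lemma~\ref{lem:spcase2-1}, the hypothesis is used in its symmetric form: $\mN^f_i$ and $\mN^g_j$ are incomparable in the ancestor--descendant order (no directed path in either direction). Since $\D_A$ is an arborescence, the ancestors of any node form a chain, so two incomparable nodes cannot share a common descendant; hence $Tree(\mN^f_i)$ and $Tree(\mN^g_j)$ are vertex-disjoint. The same observation shows that any node of $Tree(\mN^f_i)$ and any node of $Tree(\mN^g_j)$ are themselves incomparable, since a common descendant of two such nodes would again force $\mN^f_i$ and $\mN^g_j$ to be comparable.

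For the disjointness claim $Z'(\mN^f_i)\cap Z'(\mN^g_j)=\emptyset$ I would argue by contradiction. I would first note that in a maximal cover every edge covers at least one node of the corresponding tree (a superfluous edge could be dropped without losing coverage), so this holds without loss of generality. Thus an edge $e$ in $Z'(\mN^f_i)$ covers some $\mN_1\in Tree(\mN^f_i)$, and if the same $e$ lay in $Z'(\mN^g_j)$ it would cover some $\mN_2\in Tree(\mN^g_j)$; that is, $e$ lies in both edge sets $A_{\mN_1}$ and $A_{\mN_2}$ of feedback edges covering $\mN_1$ and $\mN_2$. But $\mN_1$ and $\mN_2$ are incomparable by the previous paragraph, so Lemma~\ref{lem:spcase2-1} gives $A_{\mN_1}\cap A_{\mN_2}=\emptyset$, a contradiction. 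Hence no edge can belong to both covers.

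For the cost additivity I would prove two inequalities. The upper bound is immediate: $Z(\mN^f_i)\cup Z(\mN^g_j)$ covers $Tree(\mN^f_i)\cup Tree(\mN^g_j)$, and by the disjointness just established its cost is exactly $c(Z(\mN^f_i))+c(Z(\mN^g_j))$, so the joint optimum is at most this. For the lower bound I would take any optimal joint cover $Z^\*$ and split its edges by which tree they serve: let $Z^\*_f$ be the edges of $Z^\*$ covering at least one node of $Tree(\mN^f_i)$ and $Z^\*_g$ those covering at least one node of $Tree(\mN^g_j)$. Lemma~\ref{lem:spcase2-1} guarantees $Z^\*_f$ and $Z^\*_g$ are disjoint (no edge covers a node in each tree), and since $Z^\*$ covers every node of $Tree(\mN^f_i)$, the edge covering each such node lies in $Z^\*_f$, so $Z^\*_f$ is a feasible cover of $Tree(\mN^f_i)$; likewise for $Z^\*_g$. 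Optimality of $Z(\mN^f_i)$ and $Z(\mN^g_j)$ then gives $c(Z^\*)\geqslant c(Z^\*_f)+c(Z^\*_g)\geqslant c(Z(\mN^f_i))+c(Z(\mN^g_j))$, and combining the two bounds yields equality.

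The main obstacle I anticipate is the bookkeeping around coverage responsibility: making precise that every edge in a non-redundant cover covers at least one node of the tree it is assigned to, and that the partition $Z^\*=Z^\*_f\sqcup Z^\*_g$ (up to edges covering neither tree) is well defined. Both points hinge entirely on Lemma~\ref{lem:spcase2-1} ruling out an edge that simultaneously covers a node of each subtree, so the crux reduces to verifying the incomparability of the two nodes involved and invoking the lemma; the additivity of $c(\cdot)$ over disjoint edge sets is then immediate from its definition $c(\hat E)=\sum_{e\in\hat E}c(e)$.
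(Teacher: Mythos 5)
Your proof is correct and follows essentially the same route as the paper's: both reduce the claim to Lemma~\ref{lem:spcase2-1} by observing that every node of $Tree(\mN^f_i)$ is incomparable with every node of $Tree(\mN^g_j)$, so no single feedback edge can cover a node in each tree. You are in fact more careful than the paper, which asserts the cost additivity directly from disjointness of the optimal covers, whereas you supply the two-inequality argument for the joint optimum and the non-redundancy caveat needed for the edge-set disjointness, both of which the paper leaves implicit.
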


\begin{proof}
Given there exists no directed path from node $\mN^f_i$ to $\mN^g_j$. Therefore, there exists no directed path between any node in $Tree(\mN^f_i)$ to any node in $Tree(\mN^g_j)$. Since the edge set $Z'(\mN^f_i)$ covers the nodes in $Tree(\mN^f_i)$ and $Z'(\mN^g_j)$ covers the nodes in $Tree(\mN^g_j)$, from Lemma~\ref{lem:spcase2-1}, it follows that $Z'(\mN^f_i) \cap Z'(\mN^g_j) = \emptyset$. Therefore, the cost to cover $Tree({\mN^f_i})$ and $Tree({\mN^g_j})$ is equal to the sum of the costs of the feedback edge sets $Z'(\mN^f_i)$ and $Z'(\mN^g_j)$ separately. Let $Z({\mN^f_i})$ and $Z({\mN^g_j})$ be optimal edge sets to cover $Tree({\mN^f_i})$ and $Tree({\mN^g_j})$, respectively. Since $Z({\mN^f_i}) \cap Z({\mN^g_j}) =\emptyset$, the optimal cost to cover $Tree({\mN^f_i})$ and $Tree({\mN^g_j})$ is $c(Z({\mN^f_i})) + c(Z({\mN^g_j}))$. This completes the proof.                                                                                                                                                                                                                                                                                                                                                                                                                                                                                                                                                                                                                                                                                                                                                                                                                                                                                                                                                                                                                                                                                                                                                                                                                                                                                                                                                                                                                                                                                                                                                                                                                                                                                                                                                                                                                                                                                                                                                                                                                                                               
\end{proof}

Next, we prove Theorem~\ref{theorem:spcase2} to show the optimality of Algorithm~\ref{algo:hierarchical} and give complexity of Algorithm~\ref{algo:hierarchical} in Theorem~\ref{theorem:algo_hierar_complex}. 

\noindent{\em Proof~of~Theorem~\ref{theorem:spcase2}:}
 We prove Theorem~\ref{theorem:spcase2} using an induction argument. The induction hypothesis states that $Z(\mN^f_i)$ is an optimal set of feedback edges such that the nodes in $Tree(\mN^f_i)$ lie in cycles with feedback edges in $Z(\mN^f_i)$.
 
\noindent\textbf{Base Step:} We consider $k=\Delta$ as the base case. Consider node $\mN^{\Delta}_j$ in layer $L_{\Delta}$ and the feedback edge set $A^{\Delta}_j$. Note that $A^{\Delta}_j$ consists of all feedback edges that can make the node $\mN^{\Delta}_j$ lie in a cycle with a feedback edge. For  $k=\Delta$, we find the minimum cost to cover the subtree rooted at $\mN^{\Delta}_j$. Since $L_\Delta$ is the lowest layer in the hierarchical network, $\mN^{\Delta}_j$ is a leaf node in $\D_{A}$.  Thus, for any feedback edge $(y_b,u_a)\in A^{\Delta}_j$, the $Forest(\mN^{\Delta}_j,(y_b,u_a))=\emptyset$. Hence the edge set $F(\mN^{\Delta}_j,(y_b,u_a))=\emptyset$ and $c(F(\mN^{\Delta}_i,(y_a,u_b)))= 0$. Thus we need to find the minimum cost to cover the node $\mN^{\Delta}_j$ only.  Therefore, the minimum cost edge set $Z(\mN^{\Delta}_j)$ to cover the node $\mN^{\Delta}_j$ is given by $Z(\mN^{\Delta}_j) = \arg\min_{(y_b,u_a) \in A^{\Delta}_j} P_{ab}$. Thus, for each node $\mN^{\Delta}_j$ in the lowest layer $L_\Delta$, Algorithm~\ref{algo:hierarchical} selects a minimum cost feedback edge in $A^{\Delta}_j$ for each $\mN^{\Delta}_j \in L_\Delta$. As a consequence of Corollary~\ref{cor:spcase2-cor1}, the algorithm finds a minimum cost feedback edge to cover each node in $L_\Delta$ independently. This completes the base step.

\noindent\textbf{Induction Step:}
For the induction step, we assume that the algorithm gives an optimal feedback edge set to cover all the nodes in layers $L_{k+1},\ldots,L_{{\Delta}}$, i.e., the cost to cover each subtree rooted at nodes in $\cup_{s=k+1}^{{\Delta}}L_{s}$. Then the collection $\{Z(\mN^s_j)$: $s\in 1,\ldots,k+1$ and $j\in 1,\ldots,|L_s|\}$ is the collection of optimal edge sets to cover all the subtrees whose root nodes are nodes present in layer $L_{k+1}$ and below it. Now, we will prove that $Z(\mN^k_j)$ is an optimal set of feedback edges to cover subtree $Tree(\mN^k_j)$ for each node $\mN^k_j$ in layer $L_k$, i.e., the algorithm gives the optimal cost to cover the subtrees rooted at the nodes in layer $L_k$.  Note that $A^k_j$ consists of all the feedback edges which can cover $\mN^k_j$. Since $\mN^k_j$ lies in $Tree(\mN^k_j)$, an edge $(y_b,u_a)\in A^k_j$ is essential to cover $Tree(\mN^k_j)$. Then, cost to cover $Tree(\mN^k_j)$ using some feedback edge $(y_b,u_a) \in A^k_j$ is given by $c(F(\mN^k_j,(y_b,u_a))) + P_{ab}$, where $c(F(\mN^k_j,(y_b,u_a)))$ is the optimal cost to cover $Forest(\mN^k_j,(y_b,u_a))$. As a consequence of Corollary~\ref{cor:spcase2-2}, the optimal cost of covering $Forest(\mN^k_j,(y_b,u_a))$ is the sum of the optimal costs of covering the subtrees present in the forest independently and since the optimal costs to cover these subtrees are already found (induction step assumption), we have the optimal cost to cover $Forest(\mN^k_j,(y_b,u_a))$. Therefore, the optimal cost to cover $Tree(\mN^k_j)$ using a particular feedback edge $(y_b,u_a) \in A^k_j$ is given as $c(F(\mN^k_j,(y_b,u_a))) + P_{ab}$. Since we perform the minimization of the cost over all the feedback edges in $A^k_j$, we obtain the optimal cost to cover $Tree(\mN^k_j)$. Further, $Z(\mN^k_j)$ is the union of the feedback edge $(y_b,u_a)$ selected in the minimization step and the edge set $F(\mN^k_j,(y_b,u_a))$. Thus $Z(\mN^k_j)$ is an optimal feedback edge set to cover $Tree(\mN^k_j)$. After the final iteration for the top layer $L_1$, we obtain an optimal edge set $Z(\mN^1_1)$ to cover $Tree(\mN^1_1)$, which in fact is the hierarchical network. This completes the proof of Theorem~\ref{theorem:spcase2}.
\qed
\begin{theorem}\label{theorem:algo_hierar_complex}
Consider a structured system $(\bA,\bB=\mathbb{I}_m,\bC=\mathbb{I}_p)$ and the feedback cost matrix $P$. Then, Algorithm~\ref{algo:hierarchical} which takes as input the hierarchical network corresponding to $(\bA,\bB=\mathbb{I}_m,\bC=\mathbb{I}_p)$ and feedback cost matrix $P$ and outputs an optimal cost feedback edge set to solve Problem~\ref{prob:one} has complexity of $O(n^3)$, where $n$ denotes the system dimension.
\end{theorem}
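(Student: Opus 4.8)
The plan is to split the running time of Algorithm~\ref{algo:hierarchical} into a preprocessing phase and the dynamic-programming recursion, and to bound each by $O(n^3)$. First I would account for the preprocessing that precedes the nested loops: computing the SCCs of $\D(\bA)$ costs $O(n^2)$ as noted in Section~\ref{sec:prelim}; condensing them into the DAG $\D_A$ and, since $\D_A$ is a tree under Assumption~\ref{assume:spcase2}, extracting the layers $L_f$ by a breadth-first traversal from the root costs $O(n)$. Building the ancestor/descendant state sets $\mathcal{A}^f_k,\mathcal{D}^f_k$, and hence the sets $U^f_k$ (Step~\ref{step:U}) and $Y^f_k$ (Step~\ref{step:Y}) for every node, can be done in a single pass over the tree; since an input node lies in $U^f_k$ only for the descendants of the SCC it actuates and an output lies in $Y^f_k$ only for the ancestors of the SCC it senses, one has $\sum_{f,k}(|U^f_k|+|Y^f_k|)=O(n^2)$, so this phase is $O(n^2)$.

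Next I would bound the recursion. The number of subproblems equals the number of SCC nodes $\ell=O(n)$, and the two nested loops (Steps~\ref{step:loop1}--\ref{step:loop2}) visit each node exactly once. For a fixed node $\mN^f_k$ the work is dominated by the minimization in Step~\ref{step:cZ} over the candidate edge set $A^f_k$ (Step~\ref{step:A1}), whose size satisfies $|A^f_k|\le |U^f_k|\,|Y^f_k|\le mp$. The crucial structural observation is that, within $Tree(\mN^f_k)$, the set of nodes covered by an edge $(y_j,u_i)\in A^f_k$ is exactly the unique directed path (unique since $\D_A$ is a tree) from $\mN^f_k$ down to the SCC sensed by $y_j$; hence $Forest(\mN^f_k,(y_j,u_i))$ and its optimal cost $c(F(\mN^f_k,(y_j,u_i)))$ depend only on $y_j$ and not on the input endpoint $u_i$. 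By Corollary~\ref{cor:spcase2-2} this forest cost is the sum of the already-computed optimal costs $c(Z(\cdot))$ of the subtrees hanging off that path, so it can be accumulated in $O(1)$ amortized time during one traversal of $Tree(\mN^f_k)$. Therefore the per-node cost is $O(|U^f_k|\,|Y^f_k|)+O(|Tree(\mN^f_k)|)$, and summing over all nodes gives $\sum_{f,k}|U^f_k|\,|Y^f_k|\le \ell\,m\,p=O(n^3)$ together with $\sum_{f,k}|Tree(\mN^f_k)|=O(n^2)$. Combining the two phases yields the claimed $O(n^3)$ bound.

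The main obstacle is the forest-cost evaluation inside the innermost minimization. A naive implementation that, for each of the $O(mp)$ pairs $(y_j,u_i)$ in $A^f_k$, recomputes $c(F(\mN^f_k,(y_j,u_i)))$ by re-summing over its constituent subtrees would cost $O(n)$ per pair and inflate the total to $O(n^4)$. The argument must therefore carefully establish the two points that keep this under control: that the forest, and hence its optimal covering cost, is independent of $u_i$, so the expensive forest evaluation is performed $O(|Y^f_k|)$ rather than $O(|U^f_k|\,|Y^f_k|)$ times; and that, by Corollary~\ref{cor:spcase2-2} together with the values $c(Z(\cdot))$ already memoized for the lower layers (legitimate because Step~\ref{step:loop1} proceeds bottom-up from layer $\Delta$ to layer $1$), each forest cost is obtained by additive bookkeeping along the covering path rather than by a fresh subtree enumeration. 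Once these are in place the remaining estimates are the routine summations above, and the $O(n^3)$ bound follows.
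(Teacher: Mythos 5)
Your proposal is correct and reaches the bound by the same top-level count as the paper: $O(n)$ subtree subproblems, each performing a minimization over at most $O(|E_K|)=O(mp)=O(n^2)$ candidate feedback edges, for $O(n^3)$ overall. The difference is in how much is justified. The paper's proof stops at ``$O(n)$ nodes times $O(|E_K|)$ edges per minimization'' and implicitly treats each evaluation of $P_{ij}+c(F(\mN^f_k,(y_j,u_i)))$ in Step~\ref{step:cZ} as constant time. You are right that this is the only delicate point: a naive recomputation of the forest cost for each of the $O(mp)$ pairs would add an $O(n)$ factor and push the bound to $O(n^4)$. Your two observations close that gap cleanly --- that in the arborescence $\D_A$ the nodes of $Tree(\mN^f_k)$ covered by $(y_j,u_i)$ form the unique path from $\mN^f_k$ to the SCC sensed by $y_j$, so $Forest(\mN^f_k,(y_j,u_i))$ and hence $c(F(\mN^f_k,(y_j,u_i)))$ depend only on $y_j$; and that by Corollary~\ref{cor:spcase2-2} each such forest cost is an additive combination of the memoized values $c(Z(\cdot))$ from lower layers, computable in one traversal of $Tree(\mN^f_k)$. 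Together with your $O(n^2)$ preprocessing bound this gives a fully implementable $O(n^3)$ algorithm, whereas the paper's argument is a coarser operation count that leaves the forest-evaluation cost unaddressed. In short: same decomposition and same final bound, but your write-up supplies an implementation-level justification the paper omits.
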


\begin{proof}
The number of subtrees possible in the hierarchical network is equal to the number of SCCs in $\D(\bA)$ which is of the order of $n$. The minimization step in Algorithm~\ref{algo:hierarchical} is performed for all the feedback edges which cover a node $\mN^f_i \in V_A$, which is of the order of $|E_K|$. Therefore, the complexity of Algorithm~\ref{algo:hierarchical} is $O(n|E_K|)$. Since $m=O(n)$ and $p=O(n)$, the number of feedback edges in the system is $O(n^2)$. Thus the complexity of Algorithm~\ref{algo:hierarchical} is $O(n^3)$.
\end{proof}            
\vspace*{-2 mm}
\subsubsection*{Illustrative example for hierarchical network}
 \begin{figure}[t]
\hspace*{-6 mm} \begin{minipage}{.28\textwidth}
\begin{tikzpicture}[scale=0.45, ->,>=stealth',shorten >=1pt,auto,node distance=1.65cm, main node/.style={circle,draw,font=\scriptsize\bfseries}]
\definecolor{myblue}{RGB}{80,80,160}
\definecolor{almond}{rgb}{0.94, 0.87, 0.8}
\definecolor{bubblegum}{rgb}{0.99, 0.76, 0.8}
\definecolor{columbiablue}{rgb}{0.61, 0.87, 1.0}


\fill[almond] (0,0) circle (15.0 pt);  
  \node at (0,0) {\scriptsize $\mN^1_1$};

  \fill[almond] (-2.5,-2) circle (15.0 pt);
  \fill[almond] (2.5,-2) circle (15.0 pt);
  
  \node at (-2.5,-2) {\scriptsize $\mN^2_1$};  
  \node at (2.5,-2) {\scriptsize $\mN^2_3$};
   
  \fill[almond] (-3.5,-4) circle (15.0 pt);
  \fill[almond] (-1.5,-4) circle (15.0 pt);
   \node at (-3.5,-4) {\scriptsize $\mN^3_1$};
   \node at (-1.5,-4) {\scriptsize $\mN^3_2$};
   

  \fill[almond] (2.5,-4) circle (15.0 pt);
   \node at (2.5,-4) {\scriptsize $\mN^3_5$}; 

\fill[bubblegum] (-1.5,0) circle (9.0 pt);
   \node at (-1.5,0) {\scriptsize $u_1$};
   \fill[columbiablue] (0,-1.5) circle (9.0 pt);
   \node at (0,-1.5) {\scriptsize $y_1$};
   \fill[bubblegum] (2.5,-0.5) circle (9.0 pt);
   \node at (2.5,-0.5) {\scriptsize $u_2$};
   \fill[bubblegum] (-2.5,-0.5) circle (9.0 pt);
   \node at (-2.5,-0.5) {\scriptsize $u_3$};
   \fill[columbiablue] (4,-2) circle (9.0 pt);
   \node at (4,-2) {\scriptsize $y_2$};
   \fill[columbiablue] (-2.5,-3.5) circle (9.0 pt);
   \node at (-2.5,-3.5) {\scriptsize $y_3$};
   
   \fill[bubblegum] (-3.5,-2.5) circle (9.0 pt);
   \node at (-3.5,-2.5) {\scriptsize $u_4$};
   \fill[columbiablue] (-3.5,-5.5) circle (9.0 pt);
   \node at (-3.5,-5.5) {\scriptsize $y_4$};
   \fill[bubblegum] (-1.5,-2.5) circle (9.0 pt);
   \node at (-1.5,-2.5) {\scriptsize $u_5$};
   \fill[columbiablue] (-1.5,-5.5) circle (9.0 pt);
   \node at (-1.5,-5.5) {\scriptsize $y_5$};
   \fill[bubblegum] (1,-4) circle (9.0 pt);
   \node at (1,-4) {\scriptsize $u_6$};
   \fill[columbiablue] (4,-4) circle (9.0 pt);
   \node at (4,-4) {\scriptsize $y_6$};

   \draw (-1.2,0)->(-0.4,0);
   \draw (0,-0.5)->(0,-1.2);
   \draw (-2.5,-0.8)->(-2.5,-1.6);
   \draw (-2.5,-2.5)->(-2.5,-3.2);
   \draw (2.5,-0.8)->(2.5,-1.6);
   \draw (2.9,-2)->(3.7,-2);
   
   \draw (-3.5,-2.8)->(-3.5,-3.6);
   \draw (-3.5,-4.5)->(-3.5,-5.2);
   \draw (-1.5,-2.8)->(-1.5,-3.6);
   \draw (-1.5,-4.5)->(-1.5,-5.2);
   \draw (1.3,-4)->(2.1,-4);
   \draw (2.9,-4)->(3.7,-4);

	\draw (0,-0.5)  ->   (-2.5,-1.5);
	\draw (0,-0.5)  ->   (2.5,-1.5);
	
	\draw (-2.5,-2.5)  ->   (-3.5,-3.5);
	\draw (-2.5,-2.5)  ->   (-1.5,-3.5);
	
	
	\draw (2.5,-2.5)  ->   (2.5,-3.5);

;
\path[every node/.style={font=\sffamily\small}]

(-3.8,-5.5)  edge[red, bend left = 92] node [left] {} (-1.7,0.2)
(-1.2,-5.5) edge[red, bend right = 92] node [left] {} (-1.5,-2.2)
(4.3,-4)  edge[red, bend right = 95] node [left] {} (2.5,-0.2);
\end{tikzpicture}
\end{minipage}\hspace{0.25 cm}
\begin{minipage}{.21\textwidth}
\caption{\small Illustrative example of a structured system with a hierarchical network topology to  demonstrate Algorithm~\ref{algo:hierarchical}. The set of optimal edges $\{(y_4,u_1),(y_5,u_5),(y_6,u_2)\}$ obtained by Algorithm~\ref{algo:hierarchical} are shown in red.}
\label{fig:example}
\end{minipage}
\vspace*{-8 mm}
\end{figure}
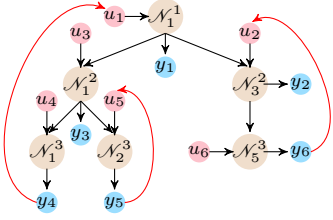

In this section, we describe Algorithm~\ref{algo:hierarchical} using the example illustrated in Figure~\ref{fig:example}. In the hierarchical network, there are three layers, $\{L_1,L_2,L_3\}$, and six SCCs, $\{\mN^1_1,\mN^2_1,\mN^2_2,\mN^3_1,\mN^3_2,\mN^3_3\}$. Corresponding to the six input and output nodes, let the feedback cost matrix be 
{\scalefont{0.9}{
$$ P=\left[
\begin{smallmatrix}
1      & 10     & 10     & 2      & 10     & 10 \\
\infty & 3      & \infty & \infty & \infty & 2 \\
\infty & \infty & 1      & 10     & 10     & \infty \\
\infty & \infty & \infty & 1      & \infty & \infty \\
\infty & \infty & \infty & \infty & 1      & \infty \\
\infty & \infty & \infty & \infty & \infty & 1 
\end{smallmatrix}
\right]
$$
}}
We need to select an optimal set of feedback edges such that the six SCCs in this network satisfies condition~(a) in Proposition~\ref{prop:one} optimally. For each SCC $\mN^f_k$, the corresponding set of feedback edges $A^f_k$ covering $\mN^f_k$ are as follows:
{\scalefont{0.9}{
\begin{eqnarray*}
A^1_1&\hspace*{-2 mm}=&\hspace*{-2 mm}\{(y_1,u_1),(y_2,u_1),(y_3,u_1),(y_4,u_1),(y_5,u_1),(y_6,u_1)\}\\ 
A^2_1&\hspace*{-2 mm}=&\hspace*{-2 mm}\{(y_3,u_1),(y_4,u_1),(y_5,u_1),(y_3,u_3)\}\\
A^2_2&\hspace*{-2 mm}=&\hspace*{-2 mm}\{(y_2,u_1),(y_6,u_1),(y_6,u_2),(y_2,u_2)\}\\
A^3_1&\hspace*{-2 mm}=&\hspace*{-2 mm}\{(y_4,u_1),(y_4,u_3),(y_4,u_4)\}\\ 
A^3_2&\hspace*{-2 mm}=&\hspace*{-2 mm}\{(y_5,u_1),(y_5,u_3),(y_5,u_5)\}\\ 
A^3_3&\hspace*{-2 mm}=&\hspace*{-2 mm}\{(y_6,u_1),(y_6,u_2),(y_6,u_6)\}
\end{eqnarray*}
}}
In the first iteration ($f=3$) we select the layer $L_3$. Our aim is to cover each subtree rooted at some node in layer $L_3$, i.e., subtrees rooted at each SCC $\mN^3_k \in L_3$.\\
For $Tree(\mN^3_1)$, $c(Z(\mN^3_1))=$  
{\scalefont{0.9}{
$$\min
\begin{Bmatrix}
P_{14}+ c(F(\mN^3_1,(y_4,u_1)))\\
P_{34}+ c(F(\mN^3_2,(y_4,u_3)))\\
P_{44}+ c(F(\mN^3_3,(y_4,u_4)))
\end{Bmatrix}
=\min
\begin{Bmatrix}
2+0\\
10+0\\
\bf{1+0}
\end{Bmatrix}
=1
$$
}}
and $Z(\mN^3_1)=(y_4,u_4)$. For $Tree(\mN^3_2)$, $c(Z(\mN^3_2))=$ 
{\scalefont{0.9}{
$$\min
\begin{Bmatrix}
P_{15}+ c(F(\mN^3_2,(y_5,u_1)))\\
P_{35}+ c(F(\mN^3_2,(y_5,u_3)))\\
P_{55}+ c(F(\mN^3_2,(y_5,u_5)))
\end{Bmatrix}
=\min
\begin{Bmatrix}
10+0\\
10+0\\
\bf{1+0}
\end{Bmatrix}
=1
$$
}}
and $Z(\mN^3_2)=(y_5,u_5)$. For $Tree(\mN^3_3)$, $c(Z(\mN^3_3))=$
{\scalefont{0.9}{
$$\min
\begin{Bmatrix}
P_{16}+ c(F(\mN^3_3,(y_6,u_1)))\\
P_{26}+ c(F(\mN^3_3,(y_6,u_2)))\\
P_{66}+ c(F(\mN^3_3,(y_6,u_6)))
\end{Bmatrix}
=\min
\begin{Bmatrix}
10+0\\
2+0\\
\bf{1+0}
\end{Bmatrix}
=1
$$
}}
and $Z(\mN^3_3)=(y_6,u_6)$.
In the next iteration ($f=2$), our aim is to cover each subtree rooted at some node  in layer $L_2$. 
For $Tree(\mN^2_1)$, $c(Z(\mN^2_1)){=}$ 
{\scalefont{0.9}{
$$\min
\begin{Bmatrix}
P_{14} + c(F(\mN^2_1,(y_4,u_1)))\\
P_{34} + c(F(\mN^2_1,(y_4,u_3)))\\
P_{15} + c(F(\mN^2_1,(y_5,u_1)))\\
P_{35} + c(F(\mN^2_1,(y_5,u_3)))\\
P_{33} + c(F(\mN^2_1,(y_3,u_3)))\\
P_{13} + c(F(\mN^2_1,(y_3,u_1))) 
\end{Bmatrix}   
{=}\min
\begin{Bmatrix}
\bf{2{+}1}\\
10{+}1\\
10{+}1\\
10{+}1\\
1{+}1{+}1\\
10{+}1{+}1
\end{Bmatrix} 
{=}3
$$
}}
and $Z(\mN^2_1)$=$\{(y_4,u_1)\}\cup F(\mN^2_1,(y_4,u_1))$
{=}$\{(y_4,u_1),(y_5,u_5)\}$.
For $Tree(\mN^2_2)$, $c(Z(\mN^2_1))=$
{\scalefont{0.9}{$$\min
\begin{Bmatrix}
P_{16}+c(F(\mN^2_2,(y_6,u_1)))\\
P_{26}+c(F(\mN^2_2,(y_6,u_2)))\\
P_{12}+c(F(\mN^2_2,(y_2,u_1)))\\
P_{22}+c(F(\mN^2_2,(y_2,u_2)))
\end{Bmatrix}
 = \min
\begin{Bmatrix} 
 10+0\\
 \bf{2+0}\\
 10+1\\
 3+1
 \end{Bmatrix} 
 = 2
 $$
 }}
and $Z(\mN^2_2)$ = $\{(y_6,u_2)\} \cup F(\mN^2_2,(y_6,u_2))$ = $\{(y_6,u_2)\}$.
In the final iteration ($f=1$), our aim is to cover each subtree rooted at some node in layer $L_1$, i.e., $Tree(\mN^1_1)$ which is the entire hierarchical network.
For $Tree(\mN^1_1)$, $c(Z(\mN^1_1)=$
{\scalefont{0.9}{
$$\min
\begin{Bmatrix}
P_{16} + c(F(\mN^1_1,(y_6,u_1)))\\
P_{15} + c(F(\mN^1_1,(y_5,u_1)))\\
P_{14} + c(F(\mN^1_1,(y_4,u_1)))\\
P_{13} + c(F(\mN^1_1,(y_3,u_1)))\\
P_{12} + c(F(\mN^1_1,(y_2,u_1)))\\
P_{11} + c(F(\mN^1_1,(y_1,u_1)))
\end{Bmatrix}
= \min
\begin{Bmatrix}
10{+}3\\
10{+}1{+}2\\
\bf{2{+}1{+}2}\\
10{+}1{+}1{+}2\\
10{+}1{+}3\\
1{+}3{+}2
\end{Bmatrix}
{=}5
$$
}}
and $Z(\mN^1_1)$ = $\{(y_4,u_1)\}\cup F(\mN^1_1,(y_4,u_1))$ = $\{(y_4,u_1),(y_5,u_5),(y_6,u_2)\}$.
Thus $Z(\mN^1_1)$ is an optimal feedback edge set to cover all the nodes in the digraph using a feedback edge and the optimal solution to Problem~\ref{prob:one} is given by 
{\scalefont{0.9}{
$$ \bK^\*=\left[
\begin{smallmatrix}
0&0&0& \* & 0 & 0 \\
0&0&0& 0  & 0 & \* \\
0&0&0& 0  & 0 & 0 \\
0&0&0& 0  & 0 & 0 \\
0&0&0& 0  & \*& 0 \\
0&0&0& 0  & 0 & 0
\end{smallmatrix}
\right].
$$
}}
\begin{rem}\label{rem:hierar}
Consider a structured system $(\bA,\bB,\bC)$ and feedback cost matrix $P$ such that the DAG of SCCs of the system consists of multiple hierarchical networks with distinct root nodes and disjoint node sets. Then all the analysis and results discussed in Subsection~\ref{subsec:hierar} still hold. In such a case, Algorithm~\ref{algo:hierarchical} is implemented separately on each of the hierarchical networks and by combining the solutions obtained gives an optimal solution to Problem~\ref{prob:one}. This gives a generalization of the structured systems considered in Subsection~\ref{subsec:hierar}.
\end{rem}
\section{Conclusion}\label{sec:conclu}
This paper addressed the following optimization problem: given a structured system with {\em dedicated} inputs and outputs and a feedback cost matrix, where each entry denotes the cost of the individual feedback connection, the objective is to obtain an optimal set of feedback edges that guarantees arbitrary pole-placement of the closed-loop structured system. This problem is referred as the optimal feedback selection problem with dedicated inputs and outputs. We proved the NP-hardness of this problem using a reduction from a known NP-hard problem, the weighted set cover problem (Theorem~\ref{theorem:one}). Later it is also shown that the problem is inapproximable to a multiplicative factor of ${\rm log\,}n$, where $n$ denotes the number of states in the system (Theorem~\ref{theorem:two}). We then proposed an algorithm that incorporates a {\em greedy scheme} with a {\em potential function} to solve this problem (Algorithm~\ref{algo:four}). This algorithm is shown to attain a solution with guaranteed approximation ratio in pseudo-polynomial time (Theorem~\ref{theorem:four}). The proposed algorithm has limitations regarding the  pseudo-polynomial time complexity. We then considered two special cases, namely structured systems with a {\em back-edge} feedback structure and structured systems satisfying a {\em hierarchical network} topology. These topologies find application in many real time networks like power networks, water distribution networks and social organization networks.  For the first class of systems, we show that Problem~\ref{prob:one} is NP-hard and also inapproximable to multiplicative factor of $\log\,n$ (Corollary~\ref{cor:spcase1}). We then provide a $(\log\,n)$-optimal approximation algorithm for this class of systems (Algorithm~\ref{algo:five} and~Theorem~\ref{th:spec_1}). For hierarchical networks, a polynomial time algorithm based on dynamic programming is proposed (Algorithm~\ref{algo:hierarchical}) and the optimality of the solution is proved (Theorem~\ref{theorem:spcase2}). Investigating other network topologies of practical importance and developing computationally efficient algorithms is part of future work. 
\bibliographystyle{myIEEEtran}  
\bibliography{Cycle}
\vspace*{-15 mm}
\begin{IEEEbiographynophoto}
{Aishwary Joshi} is pursuing his Dual Degree (B.Tech. + M.Tech.) in Electrical Engineering with specialisation in Communication and Signal Processing from Indian Institute of Technology Bombay, India. His research interests include graph theory, optimization, algorithms and computational complexity.
\end{IEEEbiographynophoto}
\vspace*{-15 mm}
\begin{IEEEbiographynophoto}
{Shana Moothedath} obtained her B.Tech. and M.Tech. in Electrical and Electronics Engineering from Kerala
University, India in 2011 and 2014 respectively. Currently she is pursuing Ph.D. in the
Department of Electrical Engineering, Indian Institute of Technology Bombay. Her research interests include matching or allocation problem, structural analysis of control systems, combinatorial optimization and applications of graph theory.
\end{IEEEbiographynophoto}
\vspace*{-15 mm}
\begin{IEEEbiographynophoto}
{Prasanna Chaporkar} received his M.S. in Faculty of Engineering from Indian Institute of Science, Bangalore, India in 2000, and Ph.D. from University of Pennsylvania, Philadelphia, PA in 2006. He was an ERCIM post-doctoral fellow at ENS, Paris, France and NTNU, Trondheim, Norway. Currently, he is an Associate Professor at Indian Institute of Technology Bombay. His research interests are in resource allocation, stochastic control, queueing theory, and distributed systems and algorithms.
\end{IEEEbiographynophoto}
\end{document}